\newtheorem{assumption}{Assumption}
\newtheorem{theorem}{Theorem}
\newtheorem{lemma}{Lemma}
\newtheorem{proposition}{Proposition} 
\newtheorem{remark}{Remark}
\newtheorem{corollary}{Corollary}
\DeclareMathOperator{\E}{\mathbb{E}}
\DeclareMathOperator{\PP}{\mathbb{P}}
\DeclareMathOperator{\iid}{\overset{\mathrm{iid}}{\sim}}
\DeclareMathOperator*{\argmin}{arg\,min}
\DeclareMathOperator*{\lip}{lip}
\DeclareMathOperator*{\diag}{diag}
\DeclareMathOperator{\Cauchy}{Cauchy}
\DeclareMathOperator{\tr}{tr}
\DeclareMathOperator{\opt}{opt}
\DeclareMathOperator{\prox}{prox}
\DeclareMathOperator{\sign}{sign}
\DeclareMathOperator{\dist}{dist}
\DeclareMathOperator{\N}{\mathcal{N}}
\newcommand{\diff}{\mathrm{d}}
\newcommand{\indep}{\perp \!\!\! \perp}
\newcommand{\by}{\bm y}
\newcommand{\bX}{\bm X}
\newcommand{\bbeta}{\bm \beta}
\newcommand{\bSigma}{\bm \Sigma}
\newcommand{\beps}{\bm \epsilon}
\newcommand{\R}{\mathbb{R}}
\newcommand{\bx}{\bm x}
\newcommand{\bh}{\bm h}
\newcommand{\bpsi}{\bm\psi}
\newcommand{\bV}{\bm V}
\newcommand{\bG}{\bm G}
\newcommand{\br}{\bm r}
\newcommand{\PPox}{\text{prox}}
\newcommand{\tdist}{\text{t-dist}}
\newcommand{\df}{\text{df}}
\newcommand{\bbh}{\bm h}
\newcommand{\hbbeta}{\hat{\bbeta}}
\newcommand{\bv}{\bm{v}}
\newcommand{\bu}{\bm{u}}
\newcommand{\bg}{\bm{g}}
\newcommand{\bepsilon}{\bm{\epsilon}}
\newenvironment{edited}{}{}
\newcommand\tk[1]{#1}
\newcommand\editline[1]{#1}
\newcommand\none[1]{{}}
\begin{document}
\title{Error estimation and adaptive tuning for unregularized robust M-estimator}

\author{\name Pierre C. Bellec \email pierre.bellec@rutgers.edu \\
       \addr Department of Statistics\\
       Rutgers University\\
       Piscataway, NJ 08854, USA
       \AND
       \name Takuya Koriyama \email tkoriyam@uchicago.edu \\
       \addr \editline{Booth School of Business\\
        The University of Chicago\\
       Chicago, IL 60637, USA} }

       \editor{Benjamin Guedj}
\maketitle 

\begin{abstract}
    We consider unregularized robust M-estimators for linear models under Gaussian design and heavy-tailed noise, in the proportional asymptotics regime where
  the sample size $n$ and the number of features $p$ are both increasing such that $p/n \to \gamma\in (0,1)$. 
  An estimator of the out-of-sample error of a robust M-estimator
      is analyzed and proved to be consistent for a large family
      of loss functions that includes the Huber loss.
  As an application of this result, 
  we propose an adaptive tuning procedure of the scale parameter $\lambda>0$
  of a given loss function \( \rho \):
  choosing \( \hat \lambda \) in a given interval \( I \) that minimizes
  the out-of-sample error estimate of the M-estimator constructed with loss
  $\rho_\lambda(\cdot) = \lambda^2 \rho(\cdot/\lambda)$
  leads to the optimal out-of-sample error over \( I \). 
  The proof relies on a smoothing argument: the unregularized M-estimation
  objective function
  is perturbed, or smoothed, with a Ridge penalty that vanishes as \( n\to+\infty \),
  and shows that the unregularized M-estimator of interest
  inherits properties of its smoothed version.
  \end{abstract}%

\begin{keywords}
  Robust regression,
  proportional regime,
  Huber loss, adaptive tuning, 
  high-dimensional statistics.
\end{keywords}

\section{Introduction}\label{sec:intro}
\editline{
    Robust statistics originated with the foundational work of \cite{huber1964robust}, which introduced methods for estimating a location parameter under heavy-tailed noise assumptions. Over time, it has evolved into a crucial area of study, providing tools to address the challenges posed by outliers and non-standard error distributions. The practical applications of robust statistics span diverse fields, including financial modeling (\cite{lambert2011robust}) and genomic data analysis (\cite{sun2020adaptive}), underscoring its broad utility across scientific and applied domains. For a comprehensive overview of the theoretical foundations and their practical implementations in data analysis, see \cite{loh2024theoretical}, \cite{maronna2019robust}, and references therein.
}

\editline{In this paper, we} consider the linear model
$\by = \bX \bbeta^\star + \beps$
where the design matrix $\bX \in \R^{n\times p}$ has i.i.d rows $\bx_i \sim \mathcal{N}(\bm 0_p, \bSigma)$ and the noise $\bm \epsilon\in \R^n$ has an i.i.d. marginal $F_\epsilon$.
Our assumption (\Cref{as:noise} below) allows
$F_\epsilon$ to be heavy-tailed, including distributions
with no finite moments.
We focus on the high-dimensional regime where the sample size $n$ and the dimension $p$ are both increasing such that \editline{$p/n \to \gamma\in (0,1)$}.
In this setting, we consider the unregularized robust M-estimator
\begin{align}\label{eq:intro_est}
   \hat{\bbeta}(\bm{y}, \bm{X})  \in \argmin_{\bbeta\in\R^p} \frac{1}{n}\sum_{i=1}^n \rho(y_i-\bx_i^\top \bbeta),
\end{align}
where $\rho: \R \to \R$ is a convex and differentiable loss such that its derivative $\psi=\rho':\R\to\R$ is bounded and Lipschitz. \editline{A} well-studied example is the Huber loss, which is defined by
\begin{equation}
    \label{huber-loss}
\forall x\in \R, \quad \rho(x) = \int_0^{|x|} \min(1, u)\diff u = 
\left\{\begin{array}{ll}
    x^2/2 & |x| \le 1\\
    |x|-1/2 & |x| \ge 1
\end{array}\right..
\end{equation}
Our goal is to select a robust loss achieving a small out-of-sample error in a data-driven manner, i.e., by only looking at the observation $(\bm y, \bm X)$. 
 Here, the out-of-sample error refers to the random quantity
\begin{align}\label{eq:intro_ofs}
  \editline{R:=} \bigl\|\bSigma^{1/2} (\hat{\bbeta}(\bm{y}, \bm{X}) -\bbeta^\star)\bigr\|_2^2 = \E\Bigl[\bigl\{\bx_{0}^\top(\hat{\bbeta}-\bbeta^\star)\bigr\}^2| \bX, \by \Bigr],
\end{align}
where $\bx_0$ is independent of $(\by, \bX)$ and has the same law as any row $\bx_i$ of $\bX$. 
\editline{In this paper, we occasionally refer to the random quantity \eqref{eq:intro_ofs} simply as the \textit{Risk}.}
Since the calculation of the out-of-sample error requires the unobservable quantities $(\bbeta^\star, \bSigma)$,
we need an observable proxy to the out-of-sample error for such data-driven selection of loss. 

\editline{
Classical statistics for M-estimation, for instance
confidence intervals for the coefficients of $\bm\beta^\star$,
require that
$p/n\to 0$, that is, the dimension is negligible compared to
the sample size.
However, the last decade has identified ample evidence that
such classical asymptotic results fail in moderate dimension
where dimension and sample size are of the same order
\citep{donoho2016high,el2013robust}. Beyond linear models, this 
phenomenon is also
clearly exposed in logistic regression \cite{sur2018modern}
with clear departure from classical results
as soon as $n\ge 10 p$ is violated.
From the point of view of classical statistics,
a peculiar property of the proportional asymptotic regime
where $\lim p/n$ is a positive constant is that
\emph{consistency fails}, and that the error
$\|\hbbeta - \bbeta^\star\|_2^2$ does not converge to 0 in probability.
}

\subsection{Results at a glance}
Our contribution is to propose a consistent estimator of the out-of-sample error for unregularized robust M-estimators. For $\psi=\rho': \R\to\R$, let  
$\psi(\bm{r})=(\psi(r_i))_{i=1}^n \in \R^n$ for any vector $\br\in \R^n$, in other words $\psi:\R\to\R$ acts componentwise on vectors.
Then, under some regularity condition on the loss and noise distribution (see \Cref{as:loss}-\ref{as:noise}), we show
that the random quantity
\begin{equation}
    \label{eq:intro_R}
\hat R = \frac{p\|\psi(\bm{y}-\bX\hat{\bbeta})\|_2^2}{\{\tr[\bV]\}^2}
\quad
 \text{with } 
        \bV := \frac{\partial \psi(\bm{y}- \bX\hat{\bbeta})}{\partial \bm{y}} = \Bigl(\frac{\partial  \psi(y_i-\bx_i^\top\hat{\bbeta})}{\partial y_j}\Bigr)_{i,j} \in \R^{n\times n}
\end{equation}
is consistent as an estimate of the out-of-sample error, in the sense that
\begin{align}\label{eq:intro_consistency}
        \|\bSigma^{1/2}(\hat{\bbeta}-\bbeta^\star)\|_2^2 
        &= \hat R + o_P(1).
\end{align}
The matrix $\bV$ in \eqref{eq:intro_R} is the Jacobian of the map $\by\mapsto \psi(\by-\bX\hbbeta)$, and its existence will be proved with probability one
in \Cref{prop:psi_lip}. Note that the random quantity $\hat{R}$ in \eqref{eq:intro_R}
is observable since 
both of $\psi(\bm{y}-\bX\hat{\bbeta})$ and $\tr[\bV]$ 
can be computed by observation of $(\by, \bX)$ only. \editline{See \Cref{subfig:risk_estimate_intro} for simulation.}
 The formal statement of the consistency result \eqref{eq:intro_consistency}
is in \Cref{th:ofs} below.

\begin{figure}[htpb]
    \centering
    \begin{subfigure}[b]{0.49\textwidth}
        \centering
        \includegraphics*[width=\textwidth]{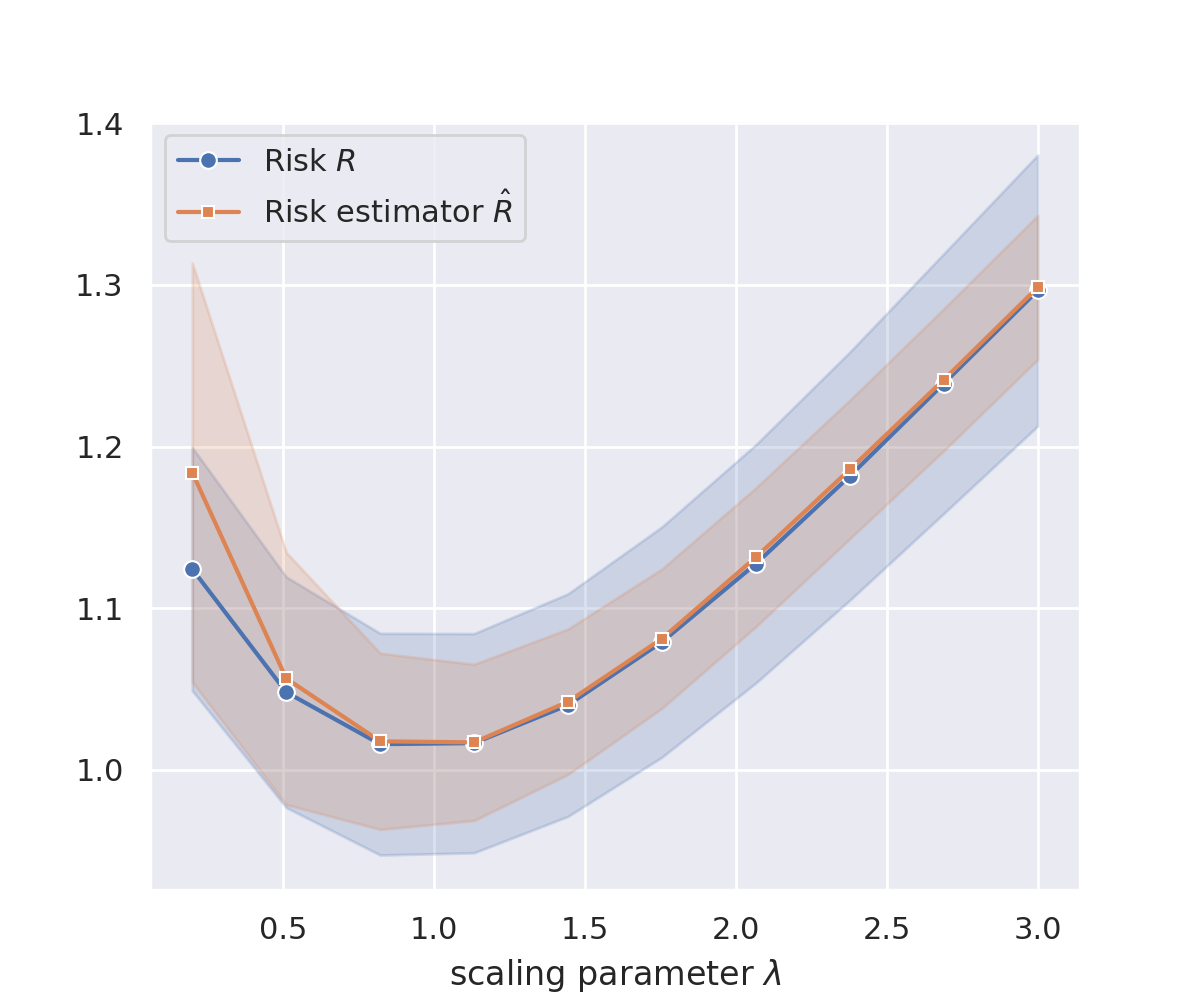}
        \caption{Estimation of out-of-sample error}
        \label{subfig:risk_estimate_intro}
    \end{subfigure}
    \begin{subfigure}[b]{0.49\textwidth}
        \centering
        \includegraphics[width=\textwidth]{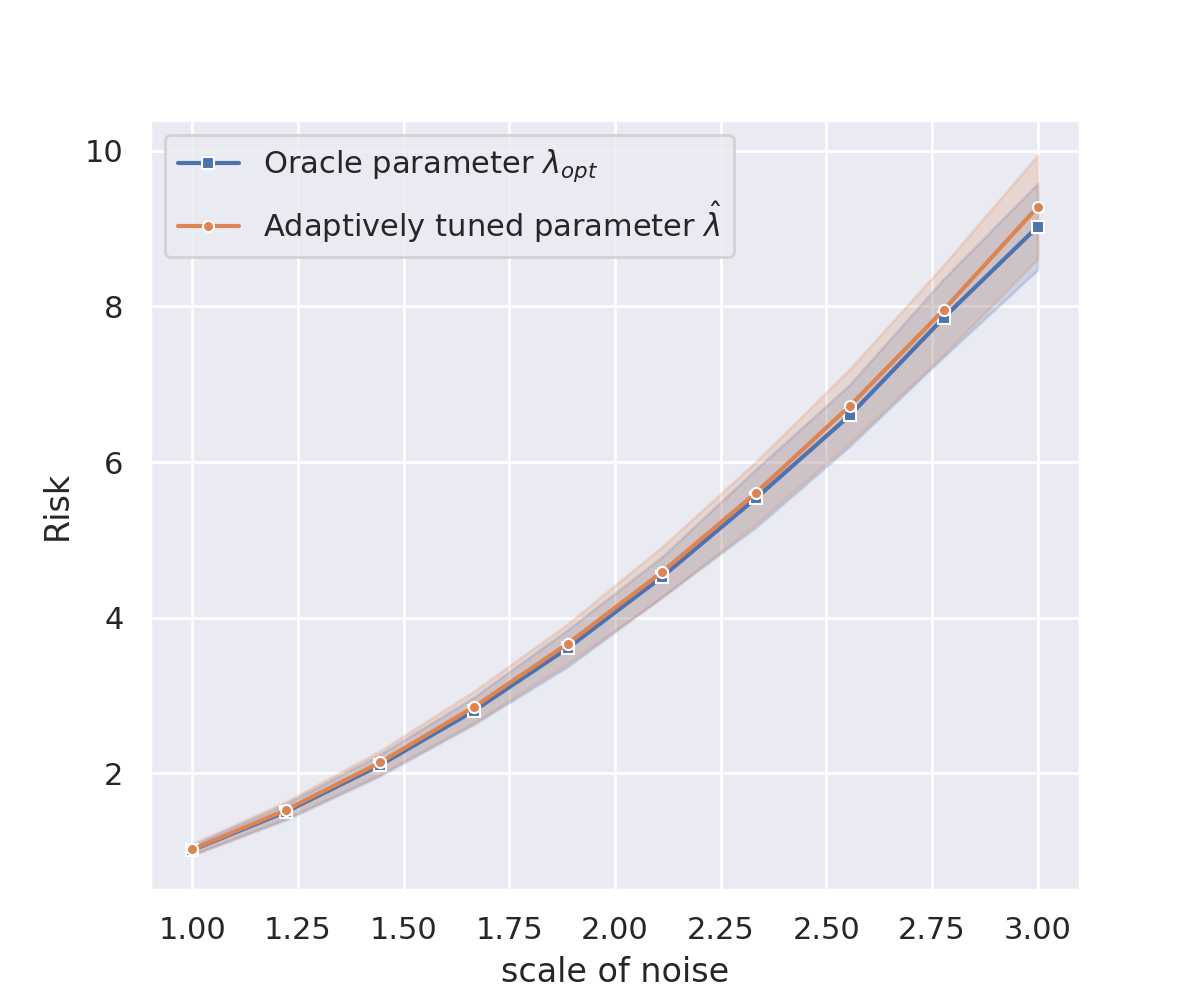}
        \caption{Adaptive tuning}
        \label{subfig:adaptive_tuning_intro}
    \end{subfigure}
    \caption{
        \editline{
        \Cref{subfig:risk_estimate_intro} is the plot of the out-of-sample error $R$ and its estimator $\hat{R}$ with the Huber loss for different scaling parameter $\lambda>0$. 
        \Cref{subfig:adaptive_tuning_intro}  is the plot of the oracle out-of-sample error $R(\lambda_{\text{opt}})$ and the out-of-sample error $R(\hat{\lambda})$ with $\hat{\lambda}$ being the minimizer of the estimator $\hat{R}$ among a finite grid $I$, as the scale of noise changes. 
        See \Cref{sec:numeric} for the details.} }
        \label{fig:intro_simulation}
\end{figure}        

\editline{
A first application of the above result in statistical inference
is the ability to choose, between two competing loss functions
$\rho$ and $\tilde \rho$, the loss function yielding the M-estimator
$\hbbeta$ enjoying the narrowest confidence interval
for a component $\beta_j^\star$ of $\bbeta^\star$.
Lemma 1 in \cite{el2013robust} establishes that
$\sqrt p (\beta_j^\star - \hat\beta_j)/\|\bSigma^{1/2}(\bbeta^\star - \hbbeta)\|$ is asymptotically normal with variance only depending on the $j$-th diagonal
element of $\bSigma^{-1}$.
Thus, the size of the confidence interval is proportional
$\|\bSigma^{1/2}(\bbeta^\star - \hbbeta)\|$, and the ability
to choose a loss function among $\{\rho,\tilde \rho\}$
leading to the smallest error
$\|\bSigma^{1/2}(\bbeta^\star - \hbbeta)\|$
through \eqref{eq:intro_consistency} will lead to the smallest
confidence interval.
}

As a second application of the above result, we propose an adaptive tuning procedure of the scale parameter $\lambda$ in a collection of loss functions $\{\rho_\lambda (\cdot) = \lambda^2\rho(\cdot/\lambda), \lambda\in I\}$, for some fixed interval $I\subset(0,\infty)$ and some robust loss $\rho$. 
Note that this hyperparameter
$\lambda$ controls the sensitivity of the loss $\rho_\lambda$ to outliers. For each $\lambda>0$, 
let $\hbbeta(\lambda)$ be the M-estimator 
\eqref{eq:intro_est} with $\rho=\rho_\lambda$
 and let $\hat{R}(\lambda)$ be the corresponding estimate of the out-of-sample error \eqref{eq:intro_R}.
Then, we show that there exists a finite subset $J\subset I$ (a finite grid made of regularly spaced parameters) such that
\begin{align}\label{eq:intro_tuning}
     \hat{\lambda} \in \argmin_{\lambda\in J} \hat{R}(\lambda) 
     \quad
     \text{ satisfies }
     \quad
     \|\bSigma^{1/2}(\hbbeta(\hat \lambda)-\bbeta^\star)\|^2
     \le
     \|\bSigma^{1/2}(\hbbeta(\lambda_{\editline{\text{opt}}})-\bbeta^\star)\|^2
     + o_P(1)
\end{align}
where $\editline{\lambda_{\text{opt}}}$ is the oracle parameter achieving the smallest
out-of-sample error in probability in the interval $I$. \editline{See \Cref{subfig:adaptive_tuning_intro} for simulation. The formal statement of the result \eqref{eq:intro_tuning} is in \Cref{th:lambda_tuning} below.}
\subsection{Related work}
Under regularity conditions on the loss $\rho$ and noise distribution $F_\epsilon$, the out-of-sample error of the unregularized M-estimator \eqref{eq:intro_est} is known 
% \cite[among others]{el2013robust,el2018impact, karoui2013asymptotic, donoho2016high, thrampoulidis2018precise}
\editline{\cite{el2013robust,el2018impact, karoui2013asymptotic, donoho2016high, thrampoulidis2018precise}}
to converge in probability to a deterministic value $\alpha^2$, which is a solution to the following nonlinear system of equations
{with positive unknowns} \( (\alpha,\kappa) \):
\begin{align}\label{eq:intro_nonlinear}
    \begin{split}
        \alpha^2\gamma  &= \E[
        ((\alpha Z + W) - \PPox[\kappa \rho] (\alpha Z+ W))^2] \\
        \alpha\gamma &= \E [((\alpha Z + W) - \PPox[\kappa \rho](\alpha Z+ W))\cdot Z]
    \end{split}
\end{align}
where $Z\sim \N(0,1)$, $W\sim F_\epsilon$, and  
 $\PPox[f] (x) := \argmin_{u\in\R} (x-u)^2/2 + f(u)$.
The convergence in probability of the out-of-sample error of \( \hbbeta \) to \( \alpha^2 \) is granted by
     \cite{donoho2016high,thrampoulidis2018precise}
     provided that \eqref{eq:intro_nonlinear} admits a unique solution.
The existence of solutions to \eqref{eq:intro_nonlinear} was established
by \cite{donoho2016high} for strongly convex losses, and in our
recent paper \cite{koriyama2023analysis} for general Lipschitz losses
on the side of the phase transition where exact recovery (i.e., \( \hbbeta=\bbeta^* \)) is not possible. On the other hand,
uniqueness is addressed in
\cite{thrampoulidis2018precise} by strict convexity arguments and
in \cite{koriyama2023analysis}.
The working assumptions of the present paper stated
in \Cref{sec:ofs} ensure that a solution \( (\alpha,\kappa) \) to \eqref{eq:intro_nonlinear}
exists and is unique, so that the convergence in probability
\( \|\bSigma^{1/2}(\hbbeta-\bbeta^*)\|\to^p \alpha \)
as \( n,p\to+\infty \) with \( p/n\to\gamma<1 \)
holds by the
main result of \cite{thrampoulidis2018precise}.

\citet{bean2013optimal}
studied the construction of the optimal loss
that minimizes the solution $\alpha$ of \eqref{eq:intro_nonlinear} when $F_\epsilon$ is log-concave and known. However, constructing the optimal loss in this way requires the knowledge of
$F_\epsilon$ to minimize the corresponding $\alpha$ in
\eqref{eq:intro_nonlinear}. Since $F_\epsilon$ is typically unknown,
this construction cannot be implemented in practice.

In this paper, we focus on the underparametrized regime $p/n\to\gamma$ with $\gamma\in(0,1)$. In the overparametrized regime with $\gamma>1$,
the out-of-sample error of the unregularized M-estimator \eqref{eq:intro_est}
explodes
(cf. \cite[Remark 5.1.1]{thrampoulidis2018precise}). 
This curse of dimensionality
can be overcome by using a penalty function $g$ and computing the corresponding
penalized M-estimator
$\hat{\bbeta}\in \argmin_{\bbeta\in \R^p}  n^{-1} \sum_{i=1}^n \rho(y_i-\bx_i^\top\bbeta) + g(\bbeta)$, for which the out-of-sample error will be finite
under suitable assumptions on the penalty $g$ 
% \cite[among others]{bayati2011lasso,thrampoulidis2018precise}.
\editline{\cite{bayati2011lasso,thrampoulidis2018precise,loureiro2021learning}.}
If $\gamma < 1$, however, some advantages of the unregularized M-estimator\eqref{eq:intro_est} include its invariance with respect to $(\bSigma,\bbeta^\star)$
\cite[Lemma 1]{el2013robust} and 
that confidence intervals for components $\beta_j^\star$ of $\bbeta^\star$
using the asymptotic normality of \( \hat\beta_j \)
do not require knowledge of $\bSigma$; 
on the other hand,
the de-debiasing correction \( \bm e_j^\top\bSigma^{-1/2}\bX^\top\psi(\by-\bX\hbbeta) \) necessary for asymptotic normality
of a regularized M-estimator with penalty \( g(\cdot) \) as above
does require knowledge (or some estimate) of $\bSigma$
\citep{bellec2022asymptotic}.

Most similar works to \eqref{eq:intro_consistency} are  \cite{bellec2023out, bellec2022derivatives,bellec2022observable}; they show that for general pairs of loss $\rho$ and penalty $g$, 
 the out-of-sample error of the penalized M-estimator $\hat{\bbeta}\in \argmin_{\bbeta\in \R^p}  n^{-1} \sum_{i=1}^n \rho(y_i-\bx_i^\top\bbeta) + g(\bbeta)$ 
enjoys the approximation
\begin{align}\label{eq:ofs_strong_convex}
    \|\bSigma^{1/2} (\hat{\bbeta}-\bbeta^\star)\|_2^2 \approx \ 
    \tr[\bV]^{-2}({\|\hat{\bpsi}\|_2^2 (2\hat{\df} - p) + \|\bSigma^{-1/2}\bX^\top \hat{\bpsi}\|_2^2}),
\end{align}
where $\hat{\bpsi} := \psi(\bm y - \bX \hat{\bbeta})\in \R^n$, \editline{$\hat{\df} := \tr[(\partial/\partial \bm{y})(\bX \hat{\bbeta})]$, and $\bV$ as in \eqref{eq:intro_R}}. 
Furthermore, these derivatives have closed forms for a certain choice of $(\rho, g)$; for instance when $\rho$ is the Huber loss and $g$ is the $L_1$ penalty, we have $\hat{\df}=|\hat{S}|$ and $\tr[\bV]=|\hat{I}|-|\hat{S}|$, where $\hat{S} = \{j\in [p] : \hat{\beta}_j \neq 0\}$ is the active set and
$\hat{I} = \{i\in [n]: | y_i - \bx_i^\top \hat\bbeta | \le 1\}$ is the set of inliers \cite[Propositions 2.2 and 2.3]{bellec2023out}. 

With the \editline{KKT} conditions of \eqref{eq:intro_est} giving
$\bX^\top \hat{\bpsi} = \bm0_p$,
we may regard \eqref{eq:intro_R} as a special case of \eqref{eq:ofs_strong_convex} by setting $\hat{\df}=p$.
However, the proof techniques in the aforementioned papers
\cite{bellec2023out, bellec2022derivatives,bellec2022observable}
rely either on the strong convexity of the penalty $g$ (excluding the $g=0$ case we study here),
or on a sufficiently sparse structure in $\bbeta^\star$ in which
case the L1 penalty also allows for approximations of the form \eqref{eq:ofs_strong_convex} \cite{celentano2020lasso,bellec2023out}.

For the unregularized case \eqref{eq:intro_est} studied here,
the results from the papers
\cite{el2013robust,el2018impact, karoui2013asymptotic, donoho2016high, thrampoulidis2018precise}
are not sufficient to establish the approximation \eqref{eq:intro_consistency},
say for the Huber loss, on the one hand because
\cite{el2013robust,el2018impact, karoui2013asymptotic, donoho2016high}
rely on strong convexity on either the loss or the penalty,
but more importantly because these results do not study the trace of the
Jacobian $\tr[\bV]$ in \eqref{eq:intro_R} and how this quantity relates
to the solution $(\alpha,\kappa)$ of the nonlinear system \eqref{eq:intro_nonlinear}.
For instance, the Convex Gaussian Min-max Theorem (CGMT) of \cite{thrampoulidis2018precise} provides the limit in probability of 
$\|\bSigma^{1/2}(\hbbeta-\bbeta^\star)\|^2$, and reversing the argument
(replacing the design matrix by its transpose) provides the limit in
probability of $\|\bpsi\|^2/n$ in \eqref{eq:intro_est}, which
equals $\alpha^2\gamma / \kappa^2$. However,
characterizing a limit in probability for the quantity $\tr[\bV]$ has
so far remained out of reach of the CGMT. The trace of the Jacobian
$\tr[\bV]$ appears in the leave-one-out analysis of \cite{el2013robust,karoui2013asymptotic,el2018impact}, however these works study the regularized
estimate
$\hat{\bbeta}\in \argmin_{\bbeta\in \R^p}  n^{-1} \sum_{i=1}^n \rho(y_i-\bx_i^\top\bbeta) + \mu\|\bbeta\|^2/2$
with an additive Ridge penalty $\mu\|\bbeta\|^2/2$.
Recently, \cite{bellec2022observable} proposed an alternative regularization
technique to study unregularized estimates of the form
\eqref{eq:intro_est}, but this crucially requires
the loss $\rho$ to be twice continuously differentiable and $\rho''(x)>0$ for all $x\in \R$, which rules out the Huber loss \eqref{huber-loss}, one of the most common robust loss functions and a major application of the present paper.

\subsection{Precise analysis of a perturbed M-estimator}
To overcome these difficulties and prove \eqref{eq:intro_consistency}
for the Huber loss and its variants, we use the ``\textit{Ridge-smoothing}'' technique, which is used in previous works
% \cite[among others]{karoui2013asymptotic,  celentano2022fundamental, loureiro2022fluctuations}.
\editline{\cite{karoui2013asymptotic,  celentano2022fundamental, loureiro2022fluctuations}.}
By rotational and translation invariance, assume without loss of generality that $\bbeta^\star = \bm{0}_p$ and $\bSigma=\bm{I}_p$. 
Then, \eqref{eq:intro_consistency} is reduced to
\begin{equation}\label{eq:intro_goal}
    \|\hat\bbeta\|_2^2 = p \|\bpsi\|_2^2 \cdot (\tr[(\partial/\partial \bm{\epsilon})\bpsi])^{-2}  
    {+o_P(1)}
\end{equation}
where $\bpsi: \bm\epsilon \in\R^n \mapsto \psi(\bm\epsilon-\bX\hat{\bbeta})\in\R^n$ is a vector field and $(\partial/\partial \bm{\epsilon})\bpsi$ is the Jacobian matrix. To show \eqref{eq:intro_goal}, we instead consider the ridge-regularized M-estimator $\hat{\bbeta}_{\text{ridge}}$ with a diminishing regularization parameter $n^{-c}$ for some constant $c>0$
\begin{equation}
    \label{eq:b_ridge_intro}
\hat{\bbeta}_{\text{ridge}} = \argmin_{\bbeta\in\R^p} \frac{1}{n}\sum_{i=1}^n \rho(\epsilon_i - \bx_i^\top\bbeta) + \frac{n^{-c}}{2}\|{\bbeta}\|_2^2,
\end{equation}
and define the vector field $\bpsi_{\text{ridge}}: \bm{\epsilon}\mapsto \psi(\bm\epsilon-\bX\hat{\bbeta}_{\text{ridge}})$. We prove the target \eqref{eq:intro_goal} by the following two steps:
\begin{enumerate}[leftmargin=6mm, label=(\Roman*), ref=(\Roman*)]
    \item \label{I} Prove \eqref{eq:intro_goal} for the regularized $\hat{\bbeta}_{\text{ridge}}$, i.e., 
    $\|\hat{\bbeta}_{\text{ridge}}\|_2^2 \approx p \|\bpsi_{\text{ridge}}\|_2^2 \cdot (\tr[(\partial/\partial \bm{\epsilon})\bpsi_{\text{ridge}}])^{-2}$. 
    \item \label{II} Prove that each quantity in \eqref{eq:intro_goal}
            is approximately the same for
        $\hat\bbeta_{\text{ridge}}$ and for $\hat\bbeta$,
        as the Ridge penalty coefficient in \eqref{eq:b_ridge_intro}
        converges to 0 polynomially in \( n \):
        \begin{equation*}
        \|\hat{\bbeta}_{\text{ridge}}\|_2^2 \approx \|\hat{\bbeta}\|_2^2, 
        \qquad \tfrac1n\|\bpsi_{\text{ridge}}\|_2^2 \approx\tfrac1n \|\bpsi\|_2^2, 
        \qquad \tfrac1n\tr[ (\partial/\partial \bm{\epsilon})\bpsi_{\text{ridge}}] \approx \tfrac1n \tr[(\partial/\partial \bm{\epsilon})\bpsi].
    \end{equation*}
\end{enumerate}
We prove \editline{\ref{I}} by a chi-square type moment inequality given in \cite[Section 7]{bellec2023out}. The more subtle and novel part of the proof is to derive
the three approximations in \editline{\ref{II}}, in particular the challenging approximation 
\begin{equation}
    \label{eq:third_approx}
    \tr[(\partial/\partial \bm{\epsilon})\bpsi_{\text{ridge}}] \approx \tr[(\partial/\partial \bm{\epsilon})\bpsi].
\end{equation}
Indeed, the closeness of the two vector fields $(\bpsi, \bpsi_{\text{ridge}})$ in the Euclidean norm does not necessarily imply the closeness of their divergence.
To show {
\eqref{eq:third_approx}, we leverage the assumption that the noise
distribution is sufficiently smooth: concretely, \Cref{as:noise} below grants
}
that the noise distribution is a convolution of two probability distributions $(F, \tilde{F})$, that is,
\begin{equation}
    \label{eq:convolution_intro}
\bm{\epsilon}=\bm{z} + \bm{\delta}, \qquad \bm{z} \indep \bm{\delta}, \qquad (z_i)_{i=1}^n \sim F, \qquad (\delta_i)_{i=1}^n \sim \tilde{F},
\end{equation}
where  $F$ has density $z \mapsto \exp(-\phi(z))$ such that $\phi:\R\to\R$ is twice continuously differentiable with bounded second derivative
{(on the other hand, \( \tilde F \) is unrestricted and may have arbitrarily
fat tails).}
Then, using a variant of
the second order Stein's formula \cite[Section 2.4]{bellec2021second} extended to the distribution with density $z\mapsto \exp(-\phi(z))$ (see \Cref{th:second} below), we will argue in \Cref{lm:diff_V} below
that
$
\E [(\tr[(\partial/\partial \bm{z})(\bpsi_{\text{ridge}} -\bpsi)] -\phi'(\bm{z})^\top (\bpsi_{\text{ridge}} -\bpsi))^2] 
$ is relatively small. As a consequence, using the chain rule for the Jacobian, we have 
\begin{equation}
    \label{eq:approx_divergence}
\tr[(\partial/\partial \bm{\epsilon})\bpsi_{\text{ridge}}] - \tr[(\partial/\partial \bm{\epsilon})\bpsi] = \tr[(\partial/\partial \bm{z})(\bpsi_{\text{ridge}} -\bpsi)] \approx \phi'(\bm{z})^\top (\bpsi_{\text{ridge}} - \bpsi).
\end{equation}
Roughly speaking,
{
\Cref{as:noise} below grants that the noise distribution is sufficiently
smooth, and thanks to this assumption,
}
closeness of the vector fields {in the form} $\|\bpsi-\bpsi_{\text{ridge}}\|^2/n=o_P(1)$ carries over to the two divergences, and implies
\begin{equation}
    \tr[(\partial/\partial \bm{\epsilon}) \bpsi]/n
    =
    \tr[(\partial/\partial \bm{\epsilon}) \bpsi_{\text{ridge}}]/n + o_P(1)
    .
    \label{conclusion_continuity_V}
\end{equation}
It is not clear at this point if
\editline{\eqref{conclusion_continuity_V}} holds without this smoothness
assumption on the noise distribution.

Adaptive tuning of the scale parameter $\lambda$ was investigated before in several papers, including \cite{loh2021scale, wang2021new} \editline{and references therein.}
%The limitation
%of these papers is that the noise variance needs to be bounded and known
% \todo{I think we cannot allow either a noise level that depends on $n$, i.e., $\epsilon=\sigma_n(z+\delta)$ unless the scaled Lasso is used (which we do not know how to analyze yet)} 
%(or at least to be estimated heuristically),
These works do not assume a proportional asymptotics regime
and do not aim for exact multiplicative constants: the resulting risk is only guaranteed to be less than $C \times (\text{optimal risk})$ for a constant $C>1$.
{
On the other hand, our goal in the present paper is to achieve the optimal
risk with multiplicative constant 1.
}
%On the other hand, our tuning method is fully data-driven, i.e., it does not require the knowledge of noise distribution, and the tuned M-estimator achieves the best possible risk in an interval of scale parameter, with $(1+\epsilon)$ multiplicative error for an arbitrary small $\epsilon>0$. 

\subsection{Organization}
%\Cref{sec:prelim} is a preliminary for sections that follow. 
In \Cref{sec:ofs}, we derive several results on the
consistency of an estimator of the out-of-sample error. In \Cref{sec:tuning}, we discuss the adaptive tuning of scale parameters. 
\Cref{sec:numeric} is devoted to numerical simulations, and \Cref{sec:proof_highlight} gives {an outline of the proof}. The rigorous proofs are provided in appendix. 

\subsection{Notation}
For any vector $\bm{u}$, we denote by $\|\bm{u}\|$ the Euclidean norm $\sqrt{\sum_i u_i^2}$. 
For any matrix $\bm X$, let $\|\bm X\|_{op}$ be the operator norm, i.e., the maximum singular value of $\bm X$. Given a vector field $\bm f: \bm{z}\in\R^n\mapsto\bm{f}(\bm{z})\in\R^m$, let $(\partial/\partial \bm{z})\bm{f}\in\R^{m\times n}$ be the Jacobian matrix, and 
 $\|\bm f\|_{\lip}$ be the Lipschitz constant of $\bm f$ induced by the Euclidean norm. 
For any function $\psi: \R \to \R$, let
 $\psi(\bm{r})=(\psi(r_i))_{i=1}^n \in \R^n$ for all $\br\in \R^n$, in other words $\psi:\R\to\R$ acts componentwise on vectors, and let $\|\psi\|_{\infty}=\sup_{x\in \R}|\psi(x)|$ be the sup norm. 
 If we write $C=\C(a, b, c)$, $C$ is a constant depending on $(a, b, c)$ only. If two random vectors $\bm x, \bm y$ are independent, we write $\bm x\indep \bm y$. 
{For a sequence of random variables \( (U_n)_{n\ge 1} \), we write
    \( U_n\to^p U \) to denote convergence in probability to \( U \)
    and \( U_n=o_P(1) \) if \( U_n\to^p 0 \).
    For a sequence of reals \( r_n>0 \), we write \( U_n=O_P(r_n) \) if
    for any \( \epsilon>0 \) there exists \( K_\epsilon \) such that
    \( \sup_{n\ge 1}\PP(|U_n|>K_{\epsilon}r_n)<\epsilon \)
}

\section{{Estimation of the out-of-sample error}}
\label{sec:ofs}
Throughout, we assume that $(y_i, \bx_i, \epsilon_i)_{i=1}^n$ are independently distributed according to
$$
\forall i \in [n], \qquad y_i = \bx_i^\top \bbeta^\star + \epsilon_i, \qquad \bx_i \sim \N(\bm{0}_p, \bSigma), \qquad \epsilon_i \sim F_\epsilon, \quad \bx_i\indep \epsilon_i
$$
where $\bbeta^\star\in\R^p$ is an unknown regression vector, $\bSigma\in\R^{p\times p}$ is some symmetric positive semi-definite matrix, and $F_\epsilon$ is a probability distribution. 
Since our interest is the out-of-sample error $\|\bSigma^{1/2}(\hat{\bbeta}-\bbeta_*)\|^2$, and the out-of-sample error  for the unregularized M-estimator $\hat{\bbeta}\in \argmin_{\bbeta\in\R^p}\sum_{i=1}^n \rho(y_i-\bx_i^\top\bbeta)$ is invariant with respect to $(\bbeta^\star, \bSigma)$ (cf. \cite{karoui2013asymptotic} or \Cref{sec:proof_highlight}), we do not require any assumptions on $(\bbeta^\star, \bSigma)$. 
% In contrast, we make some assumptions on the loss $\rho$ and noise distribution $F_\epsilon$ (see \Cref{as:loss}, \ref{as:noise}, and \ref{as:tuning}). 

When taking a limit as $n\to\infty$, we implicitly assume that the number of features $p$ and the sample size $n$ are increasing such that 
$p/n\to \gamma\in (0,1)$, 
while other quantities such $\rho$ and $F_\epsilon$ are fixed. In this setting, it has been shown by \cite{el2013robust,el2018impact, karoui2013asymptotic, donoho2016high, thrampoulidis2018precise} that if the nonlinear system of equations 
\begin{align}\label{eq:nonlinear}
    \begin{split}
        \alpha^2\gamma  &= \E[
        ((\alpha Z + W) - \PPox[\kappa \rho] (\alpha Z+ W))^2] \\
        \alpha\gamma &= \E [((\alpha Z + W) - \PPox[\kappa \rho](\alpha Z+ W))\cdot Z]
    \end{split}
    \quad \text{ where }
    \begin{cases}
    Z\sim \N(0,1),\\
    W\sim F_\epsilon,
     Z\indep W
    \end{cases}
\end{align}
admits a unique solution $(\alpha, \kappa)$, the out-of-sample error $\|\bSigma^{1/2}(\hat{\bbeta}-\bbeta_*)\|^2$ converges to $\alpha^2$ in probability. 
The existence of solutions to \eqref{eq:nonlinear} was established
in \cite{donoho2016high} for strongly convex \( \rho \), and recently
in our companion paper \cite{koriyama2023analysis}
(see \Cref{sec:details} for details).

\begin{figure}
    \centering
    \begin{subfigure}[b]{0.49\textwidth}
        \centering
        \includegraphics[width=\textwidth]{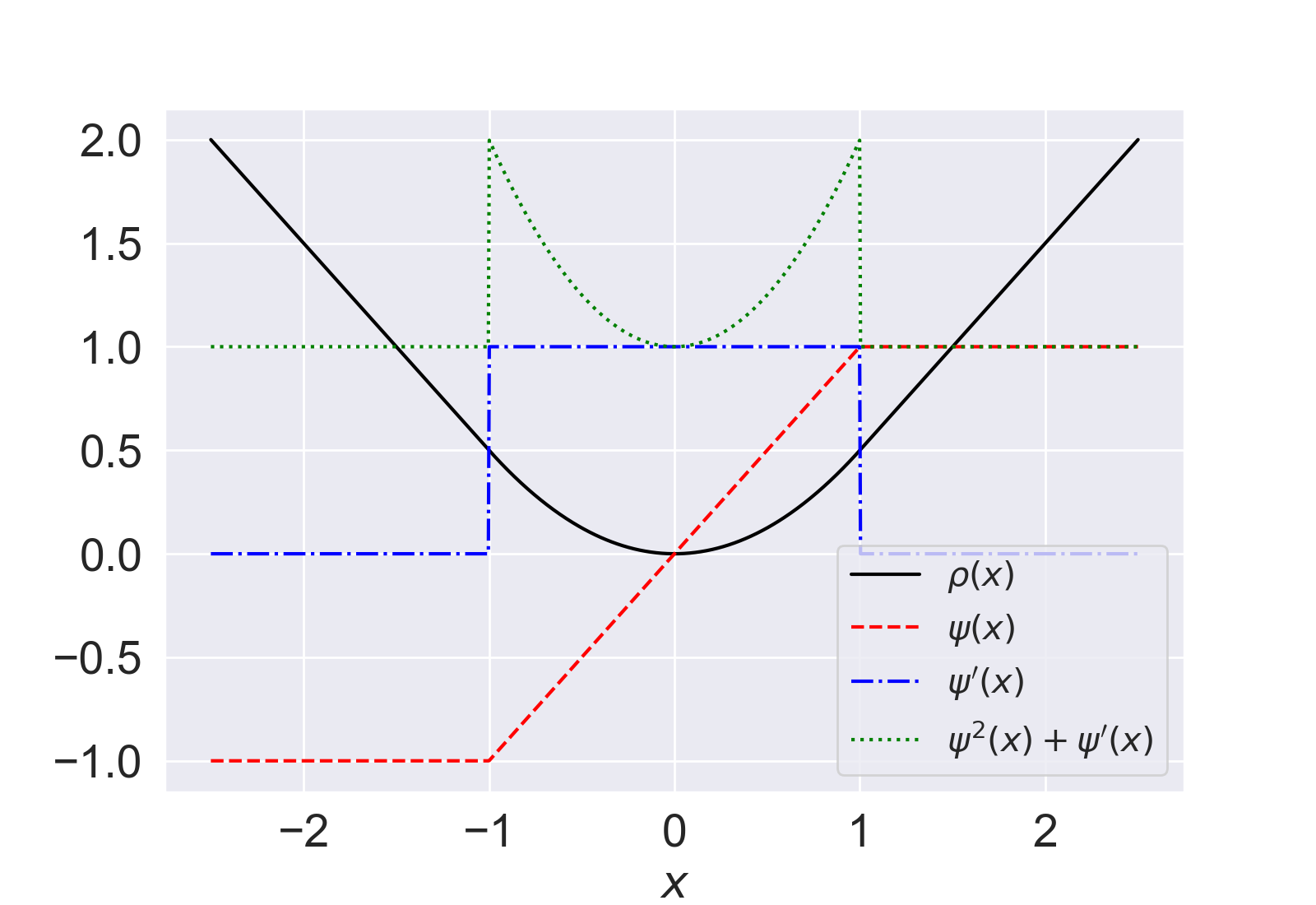}
        \caption{Huber $\rho(x)=\int_{0}^{|x|} \min(1, u)\diff u$}
        \label{subfig:huber}
    \end{subfigure}
    \begin{subfigure}[b]{0.49\textwidth}
        \centering
        \includegraphics[width=\textwidth]{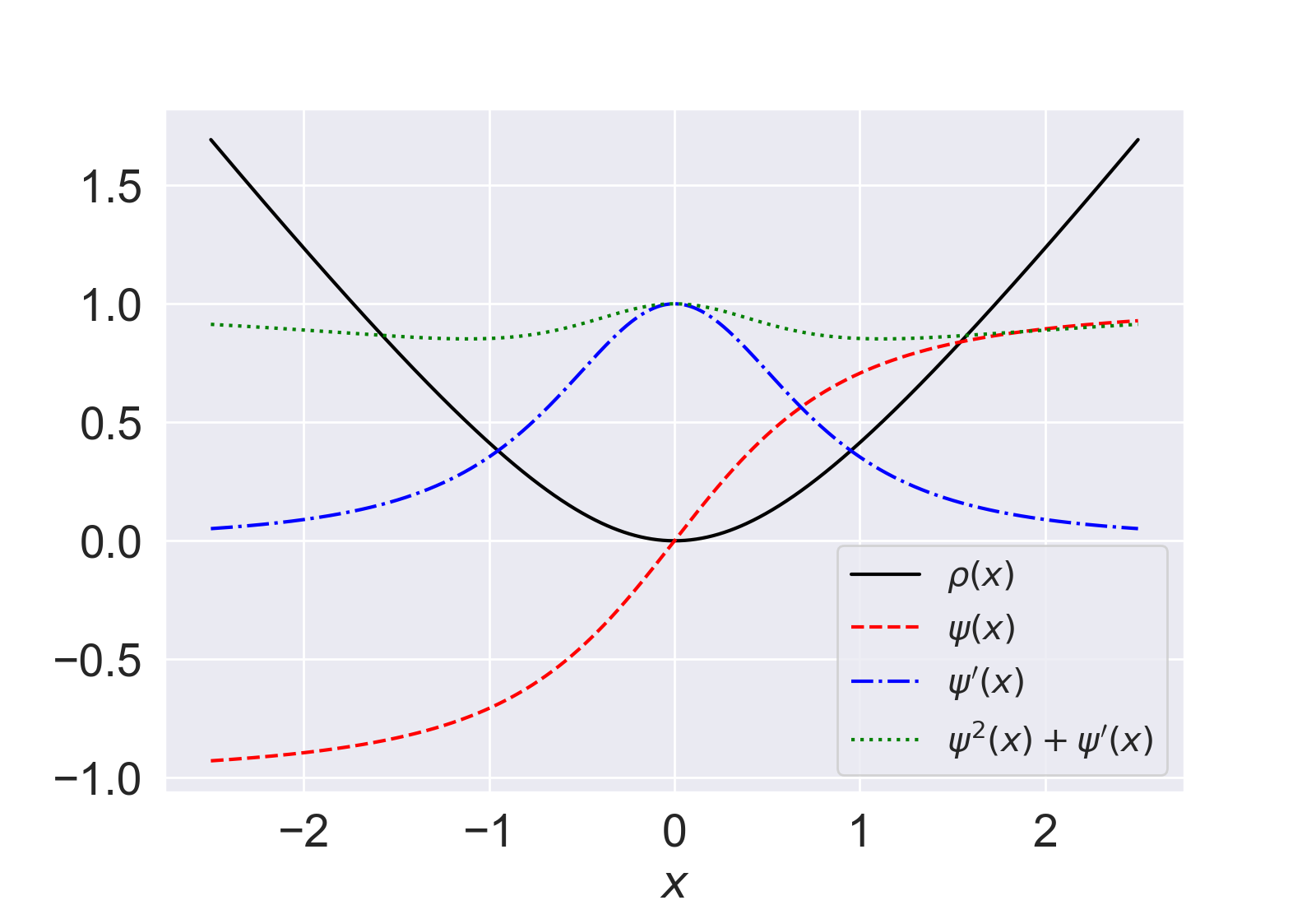}
        \caption{Pseudo Huber $\rho(x) = \sqrt{1+x^2}-1$}
        \label{subfig:pseudo_huber}
    \end{subfigure}
    \caption{Example of loss satisfying \Cref{as:loss}}
    \label{fig:loss_example}
\end{figure}
In this section, we will argue that the random quantity $\hat{R}$ defined by  
\begin{equation}\label{eq:est_ofs}
    \hat R = \frac{p\|\psi(\bm{y}-\bX\hat{\bbeta})\|_2^2}{\tr[\bV]^2}, 
     \text{ where } 
            \bV := \frac{\partial \psi(\bm{y}- \bX\hat{\bbeta})}{\partial \bm{y}}  \in \R^{n\times n}, \quad \psi=\rho':\R\to\R, 
\end{equation}
approximates well the out-of-sample error $\|\bSigma^{1/2}(\hat{\bbeta}-\bbeta_*)\|^2$. Note that 
$\bV\in \R^{n\times n}$ is the Jacobian matrix of the map $\bm{y}\in\R^n \mapsto \psi(\bm{y}-\bX\hat{\bbeta})\in\R^n$. Before moving to the formal statement, we introduce our assumption on the loss $\rho$ and noise distribution $F_\epsilon$.  
\begin{assumption}\label{as:loss}
$\rho$ is convex, differentiable, and $\{0\}=\argmin_{x}\rho(x)$, as well as $\psi=\rho':\R\to\R$ satisfies 
\begin{enumerate}
    \item $\|\psi\|_{\infty} <+\infty$. 
    \item $\|\psi\|_{\lip}=1$.
    \item $\exists \eta>0$ such that $\psi(x)^2/\|\psi\|_{\infty}^2 + \psi'(x) \ge \eta$ for almost every $x\in \R$. 
\end{enumerate}
\end{assumption}
\begin{assumption}\label{as:noise}
    $F_\epsilon$ is a convolution of $(F, \tilde{F})$, where $\tilde{F}$ is arbitrary while $F$ has some density $z\mapsto \exp(-\phi(z))$ 
    for some twice continuously differentiable function $\phi:\R\to\R$ with a bounded second derivative $\sup_{x\in \R} |\phi''(x)| <+\infty.$
\end{assumption}

The Huber loss $\rho(x) = \int_0^{|x|} \min(1, u)\diff u$  and its smooth approximations, such as
the pseudo Huber loss $\rho(x) = \sqrt{1+x^2}-1$,  satisfy \Cref{as:loss} (See \Cref{fig:loss_example}). 
\editline{
\Cref{as:noise} requires that each component $\epsilon_i$ of the noise in the linear model
is equal in distribution to $z_i+\delta_i$ for two independent random variables $z_i\sim F$ and $\delta_i\sim \tilde{F}$,
where $z_i$ has a smooth density $z\mapsto \exp(-\phi(z))$.
}
Typical example of the distribution $F$ in \Cref{as:noise} is the normal distribution $\N(0, \sigma^2)$ for some $\sigma>0$. We emphasize that there is no assumption on
$\tilde{F}$, so that the noise distribution $F_\epsilon=F*\tilde{F}$ can be heavy-tailed. For instance, we allow 
$F_\epsilon = \N(0, 1) * \Cauchy(0,1)$, 
in which $F_\epsilon$ has no finite moments.

\editline{\Cref{as:loss}(1)} is the condition for  \Cref{th:system} to be applicable;
\editline{this is a sufficient condition to guarantee that the system \eqref{eq:intro_nonlinear} has a unique solution}.
\editline{\Cref{as:loss}(2)} is mainly for the Jacobian $\bV$ of the map $\bm y\in\R^n \mapsto \psi(\bm y - \bm X \hat{\bm \beta})\in \R^n$ in \eqref{eq:est_ofs} to be well-defined. Indeed, under this assumption, $\bm y \mapsto \psi(\bm y - \bm X \hat{\bm \beta})$ is \editline{Lipschitz} for almost every $\bm X$ (cf. \Cref{prop:psi_lip} or Proposition 4.1 in \cite{bellec2023out}), so that Rademacher's theorem guarantees the existence of the Jacobian $\bV$ for almost every $(\by, \bX)\in \R^n \times \R^{n\times p}$. 
\editline{
The Lipschitz assumption in \Cref{as:loss}(2) can be relaxed to $\|\psi\|_{\lip} < +\infty$ since the unregularized M-estimator \eqref{eq:intro_est} remains invariant under a rescaling of the loss function, i.e., $\rho \mapsto \rho / \|\psi\|_{\lip}$. For simplicity and to streamline the proof arguments presented in \Cref{sec:proof_highlight} and the Appendix, we assume $\|\psi\|_{\lip} = 1$ throughout the paper.
}
\editline{\Cref{as:loss}(3)} is used to show that $\tr[\bV]$ in the denominator of $\hat{R}$ in \eqref{eq:est_ofs} is bounded from below by a positive constant times $n$ with high probability. 

\Cref{as:noise} on $F_\epsilon$ is a technical condition
to control $\tr[\bV]$;
see the discussion surrounding \eqref{eq:convolution_intro}.
An equivalent formulation of \Cref{as:noise} is that each entry
$\epsilon_i$ of the noise is a sum $z_i + \delta_i$ of
two independent random variables $z_i\sim F$ and $\delta_i\sim \tilde{F}$,
where $z_i$ has a smooth density $z\mapsto \exp(-\phi(z))$.
Numerical simulations in \Cref{subsec:relax_noise_assumption} suggest that our results still hold
for some $F_\epsilon$ without \editline{the presence of the smooth noise component
$z_i$}, which suggests that it is an artifact of the proof.
\editline{Some preliminary theoretical evidence that this assumption
    is not necesary is given in \Cref{lemma_vanishing}
    where this assumption is replaced with $\epsilon_i = \sigma z_i + \delta_i$
    with the same distributions as above but a vanishing
    amplitude $\sigma$ for the smooth part $z_i$.
    However, our results in \Cref{sec:tuning} require
    \Cref{as:noise} (and cannot accomodate a vanishing $\sigma$)
    in \Cref{lm:tr_lambda_lip}.
}

The quantity $\tr[\bm V]$ in \eqref{eq:est_ofs} is observable and
can be computed approximately by
Monte Carlo schemes
(see Section 2.11 in \cite{bellec2023out} and the references therein) 
or off-the-shelf numerical methods to compute derivatives.
For the Huber loss and the pseudo Huber loss, closed-form expressions
for $\tr[\bV]$ are available: 
$\tr[\bV]=|\{i\in[n]:y_i-\bx_i^\top\hbbeta\in[-1,1]\}|-p$
for the Huber loss 
and 
$$
\tr[\bV] = \sum_{i=1}^n\Bigl[
\psi'(\bx_i^\top\hbbeta)  
-\psi'(\bx_i^\top\hbbeta)^2\bx_i^\top \Bigl(\sum_{l=1}^n\bx_l \psi'(\bx_l^\top\hbbeta)\bx_l^\top\Bigr)^{-1} \bx_i
\Bigr]
$$ for the pseudo Huber loss $\rho(x)=\sqrt{1+x^2}-1$ or other twice-continuously differentiable loss functions with $\psi'=\rho''$ positive everywhere.
\begin{remark}\label{rm:system_condition_satisfied}
    \Cref{as:loss}-\ref{as:noise}
    imply that the system \eqref{eq:nonlinear}
    admits a unique solution. Indeed, according to \Cref{th:system}, the sufficient conditions for the system to admit a unique solution are 
    $(1)$ $\rho$ is convex, Lipschitz, and $\{0\}=\argmin_{x\in \R}\rho(x)$, and $(2)$ $\PP(W\ne 0)>0$ and \( \inf_{\lambda>0} \E[\dist(G, \lambda \partial \rho(W) )^2] > 1-\gamma \) for independent $W\sim F_\epsilon$ and $G\sim \N(0,1)$. Here, condition (1) is readily satisfied by \Cref{as:loss}. For condition (2), thanks to \Cref{as:noise}, $F_\epsilon$ does not have any point mass, so in particular $\PP_{W\sim F_\epsilon}(W\ne 0)=1$. Since $\rho$ is differentiable by \Cref{as:loss}, $\partial\rho(W)$ is always the singleton $\{\rho'(W)\}$, which gives $\inf_{\lambda>0}\E[\dist(G, \lambda\partial\rho(W))^2]=\inf_{\lambda>0}\E[(G-\lambda\rho'(W))^2] = 1>1-\gamma$. 
    %Therefore, the condition in \Cref{th:system} is always satisfied under \Cref{as:loss} and \ref{as:noise}.
\end{remark}

Now we claim that the random quantity $\hat{R}$ in \eqref{eq:est_ofs} is a consistent estimate of the out-of-sample error $\|\bSigma^{1/2}(\hat{\bbeta}-\bbeta_*)\|^2$. 
\begin{theorem}\label{th:ofs}
    Assume that $(\rho, F_\epsilon)$ satisfy \Cref{as:loss} and \ref{as:noise}. Let $\psi=\rho':\R\to\R$ be the derivative of the loss, and $\bV$ be the Jacobian matrix $(\partial/\partial \bm{y})\psi(\bm{y}-\bX\hat{\bbeta})\in\R^{n\times n}$. 
    Then, as $n, p\to\infty$ with \editline{$p/n\to \gamma\in (0,1)$}, we have
    \begin{align}
        \|\bSigma^{1/2} (\hat{\bm\beta}-\bm \beta^\star)\|^2 = \hat{R} + o_P(1), \text{ where } \hat{R} := \frac{p\|\psi(\bm{y}-\bX\hat{\bbeta})\|_2^2}{\tr[\bV]^2}. 
        \label{eq:conclusion_Theorem1}
    \end{align}
\end{theorem}

An outline of the proof is given in \Cref{sec:proof_highlight} 
\editline{
and the formal proof is given in
\Cref{sec_proof:th_ofs}.
We are able to relax \Cref{as:noise} on the noise as the next proposition
shows: the result \eqref{eq:conclusion_Theorem1} still holds if the smooth
component of the noise has vanishing amplitude $\sigma_n$.
}

\begin{proposition}
    \label{prop:relaxed}
\editline{
    Let \Cref{as:loss} be fulfilled, and assume that
    for each $i\in[n]$, the noise
    $\epsilon_i$ is equal in distribution to
    $\delta_i + \sigma_n z_i$ where $\delta_i \sim \tilde F$
    and $z_i\sim F$ are independent and $(F,\tilde F)$ are as in
    \Cref{as:noise},
    and where $\sigma_n\to 0$ with $\sigma_n \ge n^{-1/8}$.
    Then \eqref{eq:conclusion_Theorem1} still holds
    as $n,p\to+\infty$ with $p/n\to \gamma \in (0,1)$.
}
\end{proposition}

The proof is given in
\Cref{sec_proof_relaxed}.
\Cref{th:ofs} implies that the random quantity $\hat{R}$ is a consistent estimate of the out-of-sample error. 
Importantly, $\psi(\by - \bX \hat{\bbeta})$ and $\tr[\bV]$ are both observable, i.e., they can be computed by observed data $(\bm{y}, \bX)$ only. Thus, $\hat{R}$ serves as a criterion to select different losses. 
\begin{corollary}\label{cor:tuning}
Assume either that $F_\epsilon$ satisfies \Cref{as:noise},
\editline{
or that each iid component $\epsilon_i$ of the noise satisfies the relaxed condition in \Cref{prop:relaxed}.
}
For fixed integer $K$, consider $K$ different loss functions \editline{$\rho_1, \dots, \rho_K$} that satisfy \Cref{as:loss}. For each $k\in [K]$, let $\hat{\bbeta}_k \in \argmin_{\bbeta\in\R^p} \sum_{i=1}^n \rho_k(y_i-\bx_i^\top \bbeta)$ be the M-estimator computed by the $k$-th loss and $\hat{R}_k$ be the corresponding criterion \eqref{eq:est_ofs}.  
Then, we have
$$
\|\bSigma^{1/2} (\hat{\bbeta}_{\hat{k}} - \bbeta^\star)\|^2 = \min_{k=1, \dots, K} \|\bSigma^{1/2} (\hat{\bbeta}_{k} - \bbeta^\star)\|^2 + o_P(1), \ \text{ where } \hat{k} \in \argmin_{k=1, \dots, K} \hat{R}_k.
$$
\end{corollary}
See \Cref{proof:cor:tuning} for the proof.
Note that the left-hand side is the out-of-sample error of the M-estimator that minimizes the criterion $(\hat{R}_k)_{k=1}^K$ among $K$ different losses, while 
 $\min_{k=1, \dots, K} \|\bSigma^{1/2} (\hat{\bbeta}_{k} - \bbeta^\star)\|^2$ on the right-hand side 
is the optimal out-of-sample error among the candidates. 
Thus, \Cref{cor:tuning} implies that the M-estimator minimizing the criterion achieves the optimal out-of-sample error up to an error term that converges to $0$ in probability. 

\section{Adaptive tuning of scale parameters}\label{sec:tuning}
Let $\rho$ be a fixed robust loss satisfying \Cref{as:loss}. 
For $\lambda>0$, consider \textit{$\lambda$-scaled} loss $\rho_\lambda$ defined as
$$
\forall x\in \R, \quad \rho_\lambda(x) := \lambda^2\rho(x/\lambda).
$$
This $\lambda$ controls the sensitivity to outliers
{
or heavy tails, and different \( \lambda \) lead to significanclty
different performance as seen in \Cref{subfig:ofs_consistency,subfig:ofs_consistency_sigma0}.
}
When {the base loss} $\rho$ is the Huber loss or the pseudo Huber loss (see \Cref{fig:loss_example}), it holds that 
 \begin{align*}
    \forall x\in\R, \quad \rho_\lambda(x) = \left\{\begin{array}{lll}
         x^2 \cdot (x/\lambda)^{-2} \rho(x/\lambda) &  
       \to x^2/2 & \text{ as $\lambda \to +\infty$} \\
        \lambda x \cdot (x/\lambda)^{-1} \rho(x/\lambda)
       &  \sim  \lambda|x| & \text{ as 
      $\lambda\to 0+$}  
    \end{array}   
    \right., 
 \end{align*}
thanks to $\lim_{u\to 0}\rho(u)/u^2 =1/2$ and $ \lim_{u\to \pm \infty} \rho(u)/u=\pm 1$. 
Informally speaking, the scaled loss $\rho_\lambda$ behaves like the square loss for large $\lambda$ and like the absolute loss $|x|$ for small $\lambda$. 
%As we discussed in \Cref{sec:prelim}, this hyperparameter $\lambda$ controls the sensitivity of the loss $\rho_\lambda$ to outliers.

{Previous sections have} so far discussed the consistency of 
%our proposed estimate
{the estimate} $\hat{R}$ given by \eqref{eq:est_ofs}. In this section, we apply this
%this estimate to the adaptive tuning
{
consistency result to perform adaptive tuning of the parameter \( \lambda \).
}
The goal of this section is to select some $\lambda$ in a data-driven way so that the M-estimator $\hat{\bbeta}_\lambda$ {defined with} the scaled loss $\rho_\lambda$,
 \begin{equation}\label{eq:df_M_scaled}
\hat{\bbeta}_\lambda \in \argmin_{\bbeta\in \R^p} \frac{1}{n}\sum_{i=1}^n \rho_\lambda(y_i - \bx_i^\top\bbeta)  \quad \text{with} \quad \rho_\lambda(\cdot) := \lambda^2\rho(\cdot/\lambda)
\end{equation}
achieves an asymptotically optimal risk. 
%Before moving to its formal statement, 
Let us introduce some useful notation. For all $\lambda>0$, let $\psi_\lambda:\R\to\R$ be the derivative of the $\lambda$-scaled loss $\rho_\lambda$:
$$
\psi_\lambda: x\mapsto \rho_\lambda'(x) = \lambda \psi(x/\lambda) \quad \text{with}\quad \psi=\rho'.
$$
Note that {by construction}, $\|\psi_\lambda\|_{\lip}$ is invariant with respect to $\lambda$. We define the random functions ${R}$, $\hat{R}$, and the deterministic function $\alpha$ as
\begin{align}
    &R: (0,\infty) \to \R, &&\lambda \mapsto \|\bSigma^{1/2} (\hat{\bbeta}_\lambda-\bbeta^\star)\|^2 \text{ with $\hat{\bbeta}_\lambda$ being the M-estimator \eqref{eq:df_M_scaled}} \label{eq:df_risk_scaled}\\
    &\hat{R}: (0, \infty)\to\R,  &&\lambda\mapsto p\frac{\|\psi_\lambda(\by - \bX \hat{\bbeta}_\lambda)\|^2}{\tr[\bV_\lambda]^2} \text{ with } \bV_\lambda = \frac{\partial \psi_\lambda(\bm y - \bX\hat{\bbeta}_\lambda)}{\partial\bm{y}} \in \R^{n\times n} \label{eq:df_est_scaled}\\
    &\alpha: (0,\infty) \to \R, \quad &&\lambda\mapsto \alpha(\lambda) \ (\text{the solution to \eqref{eq:nonlinear} with $\rho(\cdot)=\rho_\lambda(\cdot)$}). \label{eq:df_alpha_scaled}
\end{align}
With the above notation,
\cite{donoho2016high,thrampoulidis2018precise}
grants \( R(\lambda)\to^p\alpha^2(\lambda) \) while
\Cref{th:ofs} and \Cref{th:system} yield 
\( \hat R(\lambda)\to^p\alpha^2 \) and an explicit upper bound
on \( \alpha^2(\lambda) \) with respect to $\lambda$.
We summarize these results in the
following proposition. 
\begin{proposition}\label{prop:scaled_control}
    Assume that $(\rho, F_\epsilon)$ satisfy \Cref{as:loss} and \ref{as:noise}. Then, the nonlinear system of equations \eqref{eq:nonlinear} with $\rho=\rho_\lambda$ admits a unique solution for all $\lambda>0$, so that the map $\lambda\mapsto \alpha^2(\lambda)$ in \eqref{eq:df_alpha_scaled} is well-defined. Furthermore, 
    as $n,p\to\infty$ with \editline{$p/n\to\gamma\in(0,1)$}, we have
    $$
    \hat{R}(\lambda) \to^p \alpha^2(\lambda), \quad R(\lambda)\to^p \alpha^2(\lambda), \quad 
    \alpha^2(\lambda) \le  \C(\gamma, F_\epsilon, \rho) (\lambda^2 + 1). 
    $$
    for all $\lambda>0$. 
\end{proposition}
See \Cref{proof:scaled_control} for the proof. 
\Cref{prop:scaled_control} suggests that 
the M-estimator $\hat{\bbeta}_{\hat\lambda}$ in \eqref{eq:df_M_scaled} with $\hat\lambda$ minimizing the criterion $\hat{R}(\lambda)$ over a {discrete grid}
achieves the optimal risk limit, i.e., 
$$
\alpha^2(\hat{\lambda})\approx \min_\lambda \alpha^2(\lambda) \text{ with } \hat{\lambda} \in  \argmin_{\lambda} \hat{R}(\lambda),
$$
as long as the map $\lambda\mapsto \alpha^2(\lambda)$ is smooth enough
{and the grid fine enough, so that at least one element \( \lambda \)
    of the grid
is sufficiently close to the optimal parameter (that may for instance
fall between two consecutive elements of the grid).}
To 
% show
{ensure} such smoothness {condition on the function} \( \alpha^2(\cdot) \), we introduce an additional assumption on the loss $\rho$ via $\psi=\rho'$. 
\begin{assumption}\label{as:tuning}
    $\psi=\rho':\R\to\R$ satisfies the following:
    $$
    \sup_{\lambda, \tilde{\lambda}>0, \lambda\ne\tilde{\lambda}}
    \sup_{x\in \R} \frac{|\lambda \psi(x/\lambda) - \tilde{\lambda}\psi(x/\tilde{\lambda})|}{|\lambda-\tilde{\lambda}|} < + \infty. 
    $$
\end{assumption}
Note that the Huber loss $\rho(x) = \int_{0}^{|x|} \min(1, u) du$ satisfies \Cref{as:tuning}.
If $\rho$ is twice continuously differentiable, the sufficient condition for \Cref{as:tuning} is
\begin{align}\label{eq:sufficient}
    \sup_{u\in \R} |\psi(u) - u\psi'(u)| <+\infty 
\end{align}
Indeed, 
for all $\lambda, \lambda\in (0,\infty)$ with $\lambda\ne\tilde{\lambda}$ and for all $x\in \R$, there exists $\lambda_x>0$ by the mean value theorem, such that 
\begin{edited}
$$
    \frac{|\lambda\psi(x/\lambda) - \tilde{\lambda}\psi(x/\tilde{\lambda})|}{|\lambda-\tilde{\lambda}|} = \Bigl|\frac{d}{d \lambda} \Bigl(\lambda\psi\Bigl(\frac{x}{\lambda}\Bigr)\Bigr)\Bigr|_{\lambda=\lambda_x} = \Bigl|\psi\Bigl(\frac{x}{\lambda_x}\Bigr) - \frac{x}{\lambda_x} \psi'\Bigl(\frac{x}{\lambda_x}\Bigr)\Bigr| \le \sup_{u\in \R} |\psi(u) - u\psi'(u)|,
$$ 
\end{edited}
which implies that  \eqref{eq:sufficient} is a  sufficient condition for \Cref{as:tuning}. We can easily check that \eqref{eq:sufficient} is satisfied by the pseudo Huber loss $\rho(x) = \sqrt{1+x^2} - 1$. 

Now we will claim that the map $\lambda\mapsto \alpha^2(\lambda)$ in \eqref{eq:df_alpha_scaled} is localy $1/2$-H\"older continuous.
See \Cref{proof:alpha_lipschitz} for the proof. 
\begin{theorem}\label{th:alpha_lipschitz}
Assume that $(\rho, F_\epsilon)$ satisfy  \Cref{as:loss}, \ref{as:noise}, and \ref{as:tuning}. 
For each $\lambda>0$, 
let $\alpha^2(\lambda)$ be the solution to the nonlinear system of equations \eqref{eq:nonlinear} with $\rho(\cdot)=\lambda^2\rho(\cdot/\lambda)$. 
Then, the map $\lambda \mapsto \alpha^2(\lambda)$ is locally $1/2$-H\"older continuous in the sense that
$$
\sup_{\lambda_{\min} \le \lambda\le\lambda_{\max} } \frac{|\alpha^2(\lambda)-\alpha^2(\tilde{\lambda})|}{|\lambda-\tilde{\lambda}|^{1/2}} \le \C(\lambda_{\min}, \lambda_{\max}, \rho, F_\epsilon, \gamma)
$$
for all $\lambda_{\min}, \lambda_{\max}\in (0,\infty)$ with $\lambda_{\min}<\lambda_{\max}$. 
\end{theorem}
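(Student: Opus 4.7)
The plan is to transfer a uniform-in-probability modulus of continuity from the random out-of-sample error \( R(\lambda) = \|\bSigma^{1/2}(\hbbeta_\lambda-\bbeta^\star)\|^2 \) defined in \eqref{eq:df_risk_scaled} to its deterministic limit, using the pointwise convergence \( R(\lambda) \to^p \alpha^2(\lambda) \) from \Cref{prop:scaled_control}. Suppose one can establish, for any compact interval \( [\lambda_{\min},\lambda_{\max}] \subset (0,\infty) \), a random Hölder estimate
\[
|R(\lambda) - R(\tilde\lambda)| \le X_n \,|\lambda - \tilde\lambda|^{1/2} \quad \text{for all } \lambda, \tilde\lambda \in [\lambda_{\min},\lambda_{\max}],
\]
with \( X_n = O_P(1) \). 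Then fix any such pair \( (\lambda,\tilde\lambda) \), pick \( K \) with \( \PP(X_n\le K) \ge 3/4 \) for all \( n \) large enough, and let \( \delta > 0 \). On the event \( \{X_n\le K\}\cap\{|R(\lambda)-\alpha^2(\lambda)|\le \delta\}\cap\{|R(\tilde\lambda)-\alpha^2(\tilde\lambda)|\le \delta\} \), which has positive probability by \Cref{prop:scaled_control}, the triangle inequality gives \( |\alpha^2(\lambda)-\alpha^2(\tilde\lambda)| \le 2\delta + K|\lambda-\tilde\lambda|^{1/2} \); since the LHS is deterministic, sending \( \delta\to 0 \) yields the theorem with constant \( K \) uniform in \( (\lambda,\tilde\lambda) \).

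To produce the random Hölder bound, factorize
\[
|R(\lambda)-R(\tilde\lambda)|\le \big(\|\bSigma^{1/2}(\hbbeta_\lambda-\bbeta^\star)\| + \|\bSigma^{1/2}(\hbbeta_{\tilde\lambda}-\bbeta^\star)\|\big)\cdot \|\bSigma^{1/2}(\hbbeta_\lambda - \hbbeta_{\tilde\lambda})\|.
\]
The first factor is \( O_P(1) \) on the interval by \Cref{prop:scaled_control} and the bound \( \alpha^2(\lambda)\le C(\lambda^2+1) \), so it suffices to show \( \|\bSigma^{1/2}(\hbbeta_\lambda - \hbbeta_{\tilde\lambda})\|^2 = O_P(|\lambda-\tilde\lambda|) \). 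The two KKT conditions \( \bX^\top \psi_\lambda(\by-\bX\hbbeta_\lambda) = \bm 0 = \bX^\top \psi_{\tilde\lambda}(\by - \bX\hbbeta_{\tilde\lambda}) \), together with the mean value theorem applied to the 1-Lipschitz map \( \psi_\lambda \), yield
\[
\bX^\top \bm D\,\bX(\hbbeta_\lambda - \hbbeta_{\tilde\lambda}) = \bX^\top \big[\psi_\lambda(\by-\bX\hbbeta_{\tilde\lambda}) - \psi_{\tilde\lambda}(\by-\bX\hbbeta_{\tilde\lambda})\big],
\]
with \( \bm D=\diag(d_i) \) and \( d_i \in [0,1] \). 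By \Cref{as:tuning}, the right-hand side has $\ell^2$-norm at most \( \|\bX\|_{op}\sqrt n\,C|\lambda-\tilde\lambda| \); crucially, this bound involves only the \( \psi \) values and avoids the (possibly unbounded) residuals \( y_i - \bx_i^\top\hbbeta_{\tilde\lambda} \), so heavy-tailed noise does not interfere. Pairing with \( \bu := \hbbeta_\lambda - \hbbeta_{\tilde\lambda} \) and invoking \Cref{as:loss}(3) in the form \( \psi_\lambda'(x) \ge \eta - \psi_\lambda(x)^2/\|\psi_\lambda\|_\infty^2 \) provides a quadratic lower bound on \( \bu^\top\bX^\top\bm D\bX\bu \) in terms of \( \|\bX\bu\|^2 \), modulo a saturation penalty controlled by the asymptotic bound \( \|\psi_\lambda(\by - \bX\hbbeta_\lambda)\|^2/n = O_P(1) \) implied by the nonlinear system~\eqref{eq:nonlinear}.

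The main obstacle lies precisely in this last quadratic lower bound: the M-estimation objective is not globally strongly convex since the Hessian vanishes on saturated residuals (the very mechanism by which \( \psi \) achieves robustness to outliers), so \Cref{as:loss}(3) is indispensable and must be combined with concentration of the saturation mass to beat the missing curvature. A cleaner route, in line with the proof outline of \Cref{th:ofs}, is to first establish the Hölder estimate for a Ridge-smoothed analogue of \( \hbbeta_\lambda \) — namely the minimizer of \eqref{eq:b_ridge_intro} with \( \rho \) replaced by \( \rho_\lambda \) — where the vanishing Ridge penalty directly supplies strong convexity and hence a clean quadratic lower bound on the difference of the two regularized estimators, and then transfer the estimate back to the unregularized estimator via the approximation \eqref{eq:3approx}.
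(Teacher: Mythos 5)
Your high-level reduction — establish a uniform-in-probability H\"older modulus for a random quantity converging pointwise in probability to \(\alpha^2(\lambda)\), then transfer the modulus to the deterministic limit — is exactly the paper's skeleton. But the paper applies it to the \emph{estimate} \(\hat R(\lambda)=p\|\bpsi_\lambda\|^2/\tr[\bV_\lambda]^2\), whereas you apply it to the \emph{risk} \(R(\lambda)=\|\bSigma^{1/2}(\hbbeta_\lambda-\bbeta^\star)\|^2\). This distinction is not cosmetic: it is precisely what makes the paper's argument go through and yours stall.

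Controlling \(|R(\lambda)-R(\tilde\lambda)|\) via the factorization you propose requires \(\|\bSigma^{1/2}(\hbbeta_\lambda-\hbbeta_{\tilde\lambda})\|=O_P(|\lambda-\tilde\lambda|^{1/2})\), i.e., a quantitative bound on the \emph{difference} of the two estimators. Your mean-value identity \(\bX^\top\bm D\bX(\hbbeta_\lambda-\hbbeta_{\tilde\lambda})=\bX^\top[\psi_\lambda-\psi_{\tilde\lambda}](\by-\bX\hbbeta_{\tilde\lambda})\) is correct, but extracting a norm bound from it requires a lower bound on \(\lambda_{\min}(\bX^\top\bm D\bX)\) of order \(n\). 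This is unavailable: for the Huber loss the diagonal \(d_i=\psi_\lambda'(\tilde r_i)\) vanishes on saturated coordinates, and \Cref{as:loss}(3) only constrains \(\psi^2/\|\psi\|_\infty^2+\psi'\), which gives the paper a lower bound on \(\tr[\bV_\lambda]\) (\Cref{lm:lb_trace}) but emphatically not on \(\lambda_{\min}(\bX^\top\bm D\bX)\). Your fallback via the Ridge-smoothed estimator with \(\mu=n^{-c}\) fails quantitatively: the only strong-convexity parameter available is \(n\mu=n^{1-c}\), which combined with \(\|\bG\|_{op}=O_P(\sqrt n)\) yields only
\[
\|\hat{\bm h}_\mu(\lambda)-\hat{\bm h}_\mu(\tilde\lambda)\|
\;\lesssim\;
\frac{\sqrt n\,\|\bG\|_{op}\,|\lambda-\tilde\lambda|}{n\mu}
\;=\;O_P\bigl(n^{c}\,|\lambda-\tilde\lambda|\bigr),
\]
which blows up as \(n\to\infty\). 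The vanishing Ridge penalty supplies enough curvature for the trace identity underlying \Cref{lm:str_smoothed}, but not enough to make the estimators themselves H\"older in \(\lambda\).

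The paper sidesteps this by proving H\"older continuity of \(\lambda\mapsto\|\bpsi_\lambda\|^2\) and of \(\lambda\mapsto\tr[\bV_\lambda]\) instead (\Cref{lm:psi_lambda_lip,lm:tr_lambda_lip}). The key trick in \Cref{lm:psi_lambda_lip} is that the monotonicity inequality of \Cref{lm:psi_increasing} combined with the two KKT conditions bounds \(\|\psi_\lambda(\br_\lambda)-\psi_\lambda(\br_{\tilde\lambda})\|^2\) by
\(\|\psi_\lambda(\br_{\tilde\lambda})-\psi_{\tilde\lambda}(\br_{\tilde\lambda})\|\cdot\|\bG\|_{op}\cdot(\|\hat{\bm h}_\lambda\|+\|\hat{\bm h}_{\tilde\lambda}\|)\), which depends only on the \emph{sum} of estimator norms (\(O_P(1)\) by \Cref{prop:scaled_control}) and the uniform H\"older bound from \Cref{as:tuning} — no inversion, no lower bound on a random Hessian. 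The \(\tr[\bV_\lambda]\) continuity then follows from the second-order Stein identity (\Cref{th:second}) and the smoothness of the noise, mirroring the treatment of \eqref{eq:V_diff}. That is the missing idea in your proposal: replace the target quantity by the observable ratio and bound each of its two pieces separately, so that a one-sided use of KKT suffices and the degenerate Hessian never needs to be inverted.
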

\begin{remark}
    Similar result to \Cref{th:alpha_lipschitz} is known for the regularization parameter $\lambda$ of the Lasso $\lambda\|\bbeta\|_1$ (see \cite{celentano2020lasso,miolane2021distribution}). This line of work shows a suitable smoothness of the associated nonlinear system with respect to $\lambda$, using the implicit function theorem.
    %In contrast, we use a different proof technique; 
    {We proceed differently here for the proof of \Cref{th:alpha_lipschitz}}: we 
    show the map 
    $$
    \lambda \in (0, \infty) \mapsto \hat{R}(\lambda) = \frac{\|\psi_\lambda(\bm{y}-\bX\hat{\bbeta}_\lambda)\|^2}{\tr[\bV_\lambda]^2}
    $$
    is H\"older continuous.
    %and then trace back to 
    {This H\"older continuity of \( \hat R(\cdot) \) implies that of}
    $\alpha^2(\cdot)$ thanks to the approximation $\alpha^2(\lambda)=\hat{R}(\lambda)+o_P(1)$ by \Cref{prop:scaled_control}. 
    H\"older continuity of \( \hat R(\cdot) \) follows if
    $
    \lambda\mapsto \|\psi_\lambda(\by-\bX\hat{\bbeta}_\lambda)\|^2$ and $\lambda\mapsto \tr[\bV_\lambda]^2
    $
    are both H\"older continuous. 
    {The proof in \Cref{proof:alpha_lipschitz} shows}
    the H\"older continuity of the map $\lambda \mapsto \|\psi_\lambda(\bm{y}-\bm{X}\hat{\bbeta}_\lambda)\|^2$ using \Cref{as:tuning} and the \editline{KKT} conditions. For the map $\lambda\mapsto \tr[\bV_\lambda]^2$, we show that $\tr[\bV_\lambda]$ inherits the H\"older continuity from $\psi_\lambda(\by - \bx_i^\top \hat{\bbeta}_\lambda)$, by
{
    leveraging the assumption that the noise
    distribution is sufficiently smooth;
    see \Cref{th:second} and the discussion around
    \eqref{eq:convolution_intro} on how this smoothness assumption
    is also used for results in \Cref{sec:ofs}.
}
\end{remark}
Finally, we apply \Cref{th:alpha_lipschitz} to the adaptive tuning of the scale parameter $\lambda$. 
Let us take  $\lambda_{\min}$ and $\lambda_{\max}$ such that 
$0 < \lambda_{\min} < \lambda_{\max}$, and take the closed interval $I=[\lambda_{\min}, \lambda_{\max}]$. For this fixed interval $I$, we consider a finite grid $I_N$ of $(N+1)$ points equispaced in log-scale, i.e.,
\begin{align}\label{eq:df_grid}
   I_N := \Bigl\{
   %\lambda_i = 
   \lambda_{\min}\Bigl(\frac{\lambda_{\max}}{\lambda_{\min}}\Bigr)^{i/N}: i=0, \dots, N\Bigr\} \subset I=[\lambda_{\min}, \lambda_{\max}]. 
\end{align}
for all $N\in\mathbb{N}$. 
We define {the oracle optimal parameter} $\lambda_{\opt}\in I$ and {the data-driven selected parameter} $\hat{\lambda}_N\in I_N$ as 
$$
\lambda_{\opt} \in \argmin_{\lambda \in I} \alpha^2(\lambda), \quad  \hat{\lambda}_N \in \argmin_{\lambda \in I_N} \hat{R}(\lambda). 
$$
Note that 
$\lambda_{\opt}$ exists thanks to the continuity of $\lambda\mapsto \alpha^2(\lambda)$ from \Cref{th:alpha_lipschitz} and the compactness of $I$. Here, $\lambda_{\opt}$ is the optimal scale parameter in the closed interval $I$, while
$\hat{\lambda}_N$ is the minimizer of the criterion $\hat{R}(\lambda)$ among the finite grid $I_N$ of $(N+1)$ points.
 With the above notation, we claim that $\hat{\lambda}_N$ achieves 
the theoretically optimal risk limit $\alpha^2(\lambda_{\opt}) = \min_{\lambda\in I}\alpha^2(\lambda)$. 
\begin{theorem}\label{th:lambda_tuning}
Assume that $F_\epsilon$ satisfies \Cref{as:noise}, and $\rho$ satisfies \Cref{as:loss} and \ref{as:tuning}. Let $R$, $\hat{R}$, and $\alpha^2$ be the maps defined by \eqref{eq:df_risk_scaled}, \eqref{eq:df_est_scaled}, and \eqref{eq:df_alpha_scaled}, respectively. 
Then, for any closed interval $I=[\lambda_{\min}, \lambda_{\max}]\subset (0,\infty)$, 
there exists a sequence of integers $(N_n)_{n=1}^\infty=(N_n(\gamma, \rho, F_\epsilon, \lambda_{\min}, \lambda_{\max}))_{n=1}^\infty$ such that as $n, p\to+\infty$ with $p/n\to\gamma\in(0,1)$, we have
$$
R(\hat{\lambda}_{N_n}) =
\alpha^2(\hat{\lambda}_{N_n}) + o_P(1)
= \alpha^2(\lambda_{\opt}) + o_P(1),
$$
where $\lambda_{\opt} \in \argmin_{\lambda \in I} \alpha^2(\lambda)$ and $\hat{\lambda}_N \in \argmin_{\lambda \in I_N} \hat{R}(\lambda)$ with $I_N$ given by \eqref{eq:df_grid}.
\end{theorem}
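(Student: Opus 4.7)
The plan is a grid refinement plus a sandwich argument built on three ingredients: the pointwise convergences $\hat R(\lambda)\to^p \alpha^2(\lambda)$ and $R(\lambda)\to^p \alpha^2(\lambda)$ from \Cref{prop:scaled_control}, the local $1/2$-H\"older continuity of $\lambda\mapsto\alpha^2(\lambda)$ from \Cref{th:alpha_lipschitz}, and the defining optimality $\hat R(\hat\lambda_N)\le \hat R(\lambda)$ for all $\lambda\in I_N$. The scheme is to choose $N_n\to\infty$ slowly enough that pointwise convergence on $I_{N_n}$ lifts to uniform convergence, yet so that some grid point in $I_{N_n}$ lies within $o(1)$ of $\lambda_{\opt}$.

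For the first step, for each fixed integer $k$, the set $I_k$ has cardinality $k+1$, and a union bound of the pointwise convergences from \Cref{prop:scaled_control} yields
\[
\max_{\lambda \in I_k}\bigl(|\hat R(\lambda) - \alpha^2(\lambda)| + |R(\lambda) - \alpha^2(\lambda)|\bigr) \to^p 0 \quad \text{as } n \to \infty.
\]
A standard diagonal extraction then produces an integer sequence $N_n=N_n(\gamma,\rho,F_\epsilon,\lambda_{\min},\lambda_{\max})$ with $N_n\to\infty$ such that the same convergence holds with $I_k$ replaced by $I_{N_n}$.

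For the second step, the maximum gap between consecutive points of $I_N$ is $O(1/N)$, so there exists $\tilde\lambda_{N_n}\in I_{N_n}$ with $|\tilde\lambda_{N_n}-\lambda_{\opt}|=O(1/N_n)$. By \Cref{th:alpha_lipschitz}, $\alpha^2(\tilde\lambda_{N_n}) \le \alpha^2(\lambda_{\opt})+C N_n^{-1/2} = \alpha^2(\lambda_{\opt})+o(1)$. Combining the optimality $\hat R(\hat\lambda_{N_n})\le \hat R(\tilde\lambda_{N_n})$ with the uniform convergence on $I_{N_n}$ from the first step:
\[
\alpha^2(\hat\lambda_{N_n}) = \hat R(\hat\lambda_{N_n}) + o_P(1) \le \hat R(\tilde\lambda_{N_n}) + o_P(1) = \alpha^2(\tilde\lambda_{N_n})+o_P(1) \le \alpha^2(\lambda_{\opt})+o_P(1).
\]
The converse inequality $\alpha^2(\hat\lambda_{N_n}) \ge \alpha^2(\lambda_{\opt})$ is deterministic since $\hat\lambda_{N_n}\in I$, yielding the second equality $\alpha^2(\hat\lambda_{N_n}) = \alpha^2(\lambda_{\opt})+o_P(1)$ of the theorem. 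The first equality, $R(\hat\lambda_{N_n}) = \alpha^2(\hat\lambda_{N_n}) + o_P(1)$, follows from the uniform convergence of $R(\cdot)$ on $I_{N_n}$ applied to the random point $\hat\lambda_{N_n}\in I_{N_n}$.

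The main obstacle is the uniform-convergence step above, since the rate in \Cref{prop:scaled_control} is not quantified and the grid cardinality grows with $n$. For $\hat R(\cdot)$ this could alternatively be handled using the H\"older continuity with $O_P(1)$ constant established inside the proof of \Cref{th:alpha_lipschitz}, which upgrades pointwise to uniform convergence on the entire interval $I$ via an Arzel\`a--Ascoli-type argument. For $R(\cdot)$, however, no analogous H\"older bound appears directly available, as it would require H\"older control of $\lambda\mapsto\hat\bbeta_\lambda$; this is why the theorem only asserts existence of some $(N_n)$ rather than allowing, e.g., $N_n$ polynomial in $n$, and why the diagonal extraction that lets $N_n$ grow arbitrarily slowly is the natural proof device.
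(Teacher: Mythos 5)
Your proposal is correct and follows essentially the same approach as the paper: union bound over the finite grid to get uniform convergence of $\hat R$ and $R$ to $\alpha^2$ on $I_N$, H\"older continuity of $\alpha^2(\cdot)$ from \Cref{th:alpha_lipschitz} to control the grid approximation error, and a diagonal extraction to let $N_n\to\infty$ slowly enough that these remain valid. The only difference is organizational: the paper first establishes a lemma (\Cref{lm:tuning_delta}) fixing $\delta>0$ and a corresponding $N_\delta$, then performs the diagonal extraction over $\delta=1/k$; you instead diagonalize first to get a sequence $N_n\to\infty$ along which uniform convergence holds on $I_{N_n}$, and then run the sandwich argument directly, which is equivalent.
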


See \Cref{sec:proof_tuning} for the proof. 
\Cref{th:lambda_tuning} implies that if the 
%grids
{grid} $I_N$ is fine enough, the M-estimator $\hat{\bbeta}({\hat\lambda})$ with ${\hat\lambda}$ minimizing
the criteria $\hat{R}(\lambda)$ {over the grid}
achieves the theoretically optimal risk $\alpha^2(\lambda_{\opt})=\min_{\lambda\in I}\alpha^2(\lambda)$ in the fixed interval $I$ of $\lambda$.  
We will verify \Cref{th:lambda_tuning} by numerical simulations in \Cref{sec:numeric}. 

\section{Numerical simulations}\label{sec:numeric}
 We focus on the Huber loss $\rho(x) = \int_{0}^{|x|}\min(1,t)dt$ and consider the adaptive tuning of the scale parameter $\lambda$ for the scaled loss $\rho_\lambda(\cdot) = \lambda^2\rho(\cdot/\lambda)$. 
Note that the scaled loss $\rho_\lambda$ and its derivative $\psi_\lambda := \rho_\lambda'$ can be written explicitly as 
\begin{align}\label{eq:huber_scaled_definition}
    \forall x\in \R, \quad \rho_\lambda (x) := \left\{
        \begin{array}{ll}
            x^2/2 & |x| \le \lambda\\
            \lambda|x|-\lambda^2/2 & |x|\ge\lambda
        \end{array}
        \right., \quad \psi_\lambda(x) = \left\{
            \begin{array}{ll}
                x & |x|\le \lambda\\
                \lambda \sign(x) & |x| \ge \lambda
            \end{array}
        \right..     
\end{align}
Below, we use the notation in \eqref{eq:df_risk_scaled}, \eqref{eq:df_est_scaled}, and \eqref{eq:df_alpha_scaled}; let $\hat{\bbeta}_\lambda \in \argmin_{\bbeta\in\R^p} \sum_{i=1}^n \rho_\lambda(y_i-\bx_i^\top{\bbeta})$ be the unregularized M-estimator 
computed by the scaled loss $\rho_\lambda$, and let $R(\lambda)$ and $\hat{R}(\lambda)$ be the out-of-sample error and our proposed estimate
\begin{align*}
    R(\lambda) := \|\bSigma^{1/2}(\hat{\bbeta}_\lambda-\bbeta_*)\|, \qquad 
   \hat{R}(\lambda):= \frac{p \sum_{i=1}^n \psi_\lambda (y_i-\bx_i^\top\hat{\bbeta}_\lambda)^2}{|\{i\in [n]: |y_i-\bx_i^\top\hat{\bbeta}_\lambda| \le \lambda\}|^2},
\end{align*}
where we have used the simplification $\tr[\bV_\lambda]  = \sum_{i=1}^n \bm{1}\{|y_i-\bx_i\hat{\bbeta}_\lambda|\le \lambda\}$ in \eqref{eq:df_est_scaled}. Let $\alpha^2(\lambda)$ be the solution to the nonlinear system \eqref{eq:nonlinear} with $\rho=\rho_\lambda$. 

With the above notation, we first verify 
\begin{align}\label{eq:conjecture_1}
    \hat{R}(\lambda) \approx R(\lambda) \approx \alpha^2(\lambda).
\end{align}
\editline{
We set $(n, p) = (4000, 1200)$, $F_\epsilon = \tdist (\df=2)$, $\bSigma=\bm I_p$ and $\bbeta^\star=\bm 0_p$. 
Once we generate $(\bm{y}, \bX)$,  
we compute $(R(\lambda), \hat{R}(\lambda))$ for each $\lambda$ in a finite grid. We repeat the above procedure $100$ times and 
plot $(R(\lambda)$, $\hat{R}(\lambda))$ in \Cref{subfig:ofs_consistency}, and the relative error $|\hat{R}(\lambda)/R(\lambda)-1|$ in \Cref{sub@subfig:ofs_relative_error}. 
We also plot $\alpha^2(\lambda)$ in \Cref{subfig:ofs_consistency} by solving the nonlinear system of equations \eqref{eq:nonlinear} \tk{(see \Cref{rm:how_to_solve_system} for details)}. 
\editline{
\Cref{subfig:risk_estimate_intro} in the introduction features the same
experiment as \Cref{subfig:ofs_consistency}, without the
theoretical curve $\lambda \to \alpha^2(\lambda)$. 
}
%\footnote{\editline{Rearranging the equations we solve it as a two-dimensional fixed point equation}}
These figures are consistent with \eqref{eq:conjecture_1}. 
}
Next, we conduct the adaptive tuning of the scale parameter $\lambda>0$. 
Let us take $I=[1,10]$ and the finite grid $I_N$ as $\{\lambda_i=10^{i/100}: i\in 0, 1, \dots, 100\}\subset I$. Then, \Cref{cor:tuning} and \Cref{th:lambda_tuning} implies 
\begin{align}\label{eq:conjecture_2}
    \min_{\lambda\in I_N} R(\lambda) \approx R(\hat{\lambda}) \approx \min_{\lambda\in I}\alpha^2(\lambda) \quad \text{where}\quad \hat{\lambda}\in \argmin_{\lambda\in I_N} \hat{R}(\lambda). 
\end{align}
Below, we verify \eqref{eq:conjecture_2}
as we change the scale of noise distribution $F_\epsilon$  in the following way:
$$
\editline{
F_\epsilon := \sigma \cdot \tdist(\df=2) \text{ where  } \sigma\in [1, 3].
}
$$
For each $\sigma$ in a finite grid over \editline{$[1,3]$}, we generate dataset $(\bX, \bm{y})$ and calculate $R(\hat{\lambda}_N)$ and $\min_{\lambda \in I_N} R(\lambda)$.
We repeat the above procedure \editline{$100$} times and plot $R(\hat{\lambda})$ and $\min_{\lambda \in I} R(\lambda)$ in \Cref{subfig:tuning_oracle}. 
We also plot $\min_{\lambda \in I} \alpha(\lambda)^2$ in the same figure.
\editline{
\Cref{subfig:adaptive_tuning_intro} in the introduction has the same
experiment as \Cref{subfig:tuning_oracle}, without the
theoretical limit.
}

In \Cref{subfig:tuning_relative_error}, we plot the two relative errors:
$|\frac{R(\hat{\lambda}_N)}{\min_{\lambda\in I_N} R(\lambda)}-1|$ and $|\frac{R(\hat{\lambda}_N)}{\min_{\lambda\in I}\alpha^2(\lambda)}-1|$.
These Figures are consistent with \eqref{eq:conjecture_2}.

\begin{figure}[htpb]
    \centering
    \begin{subfigure}[b]{0.49\textwidth}
        \centering
        \includegraphics[width=\textwidth]{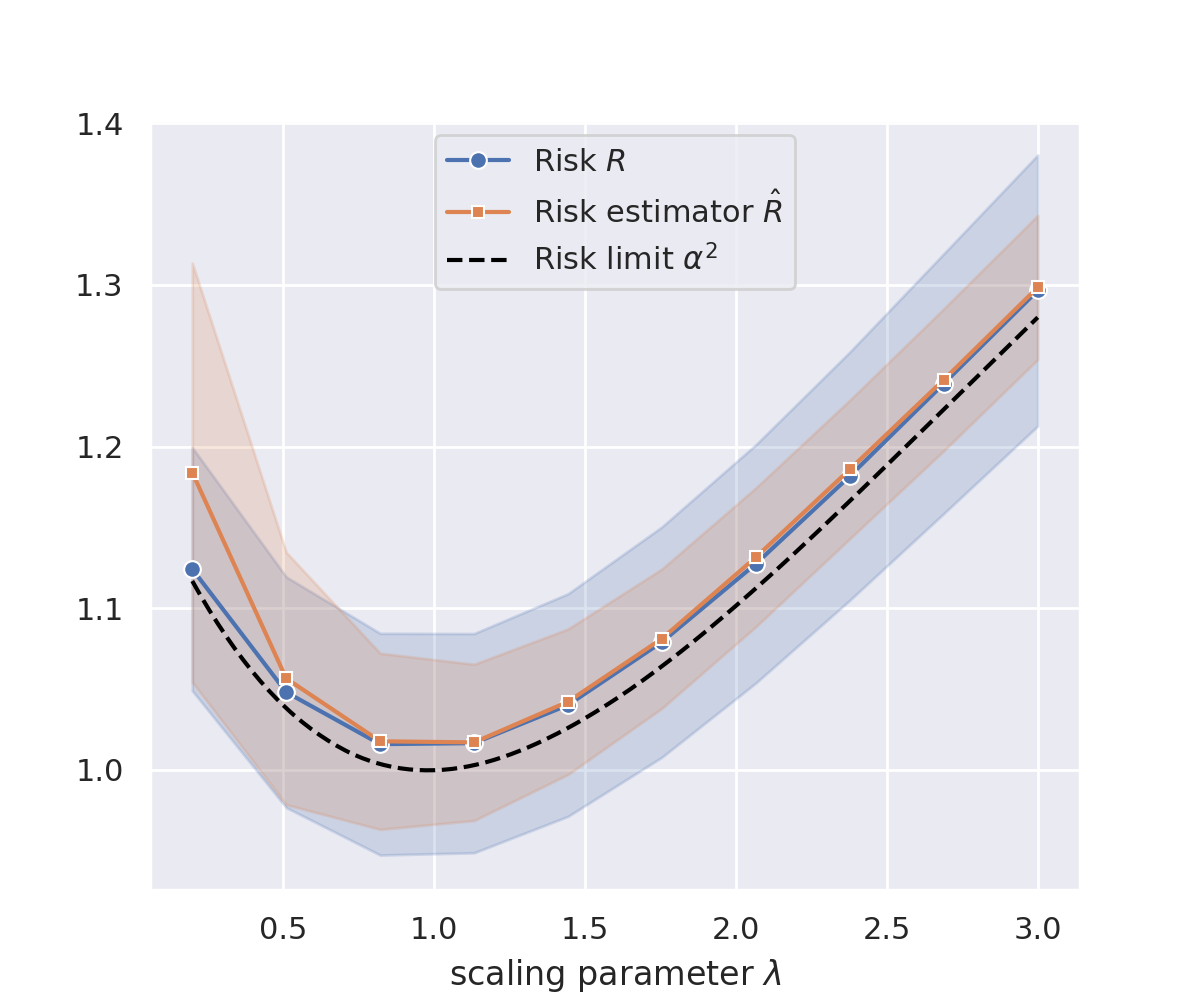}
        \caption{$R(\lambda)$, $\hat{R}(\lambda)$ and $\alpha^2(\lambda)$.}
        \label{subfig:ofs_consistency}
    \end{subfigure}
    \begin{subfigure}[b]{0.49\textwidth}
        \centering
        \includegraphics[width=\textwidth]{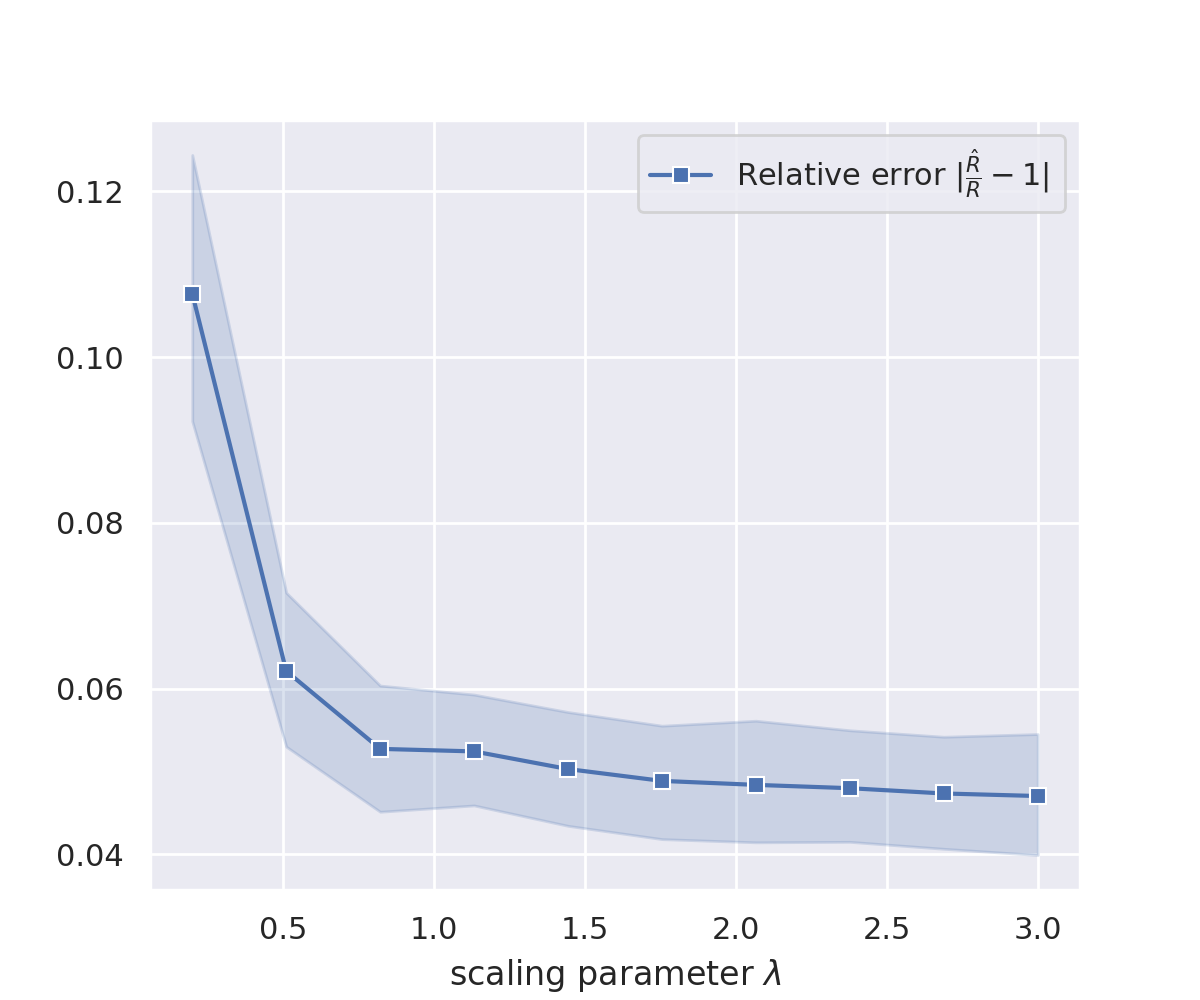}
        \caption{Relative error $|\hat{R}(\lambda)/R(\lambda)-1|$}
        \label{subfig:ofs_relative_error}
    \end{subfigure}
    \caption{
        \editline{Plot of the out-of-sample error $R(\lambda)$ and estimator $\hat{R}(\lambda)$ over 100 repetitions, with $n=4000$, $p=1200$, for the Huber loss for different values of the scale parameters $\lambda$. The noise distribution is $\text{t-dist}(\df=2)$. 
    $\alpha^2(\lambda)$ is the solution to the nonlinear system \eqref{eq:nonlinear}.}}
    \label{fig:ofs}
\end{figure}

\begin{figure}[htpb]
    \centering
    \begin{subfigure}[b]{0.49\textwidth}
        \centering
        \includegraphics[width=\textwidth]{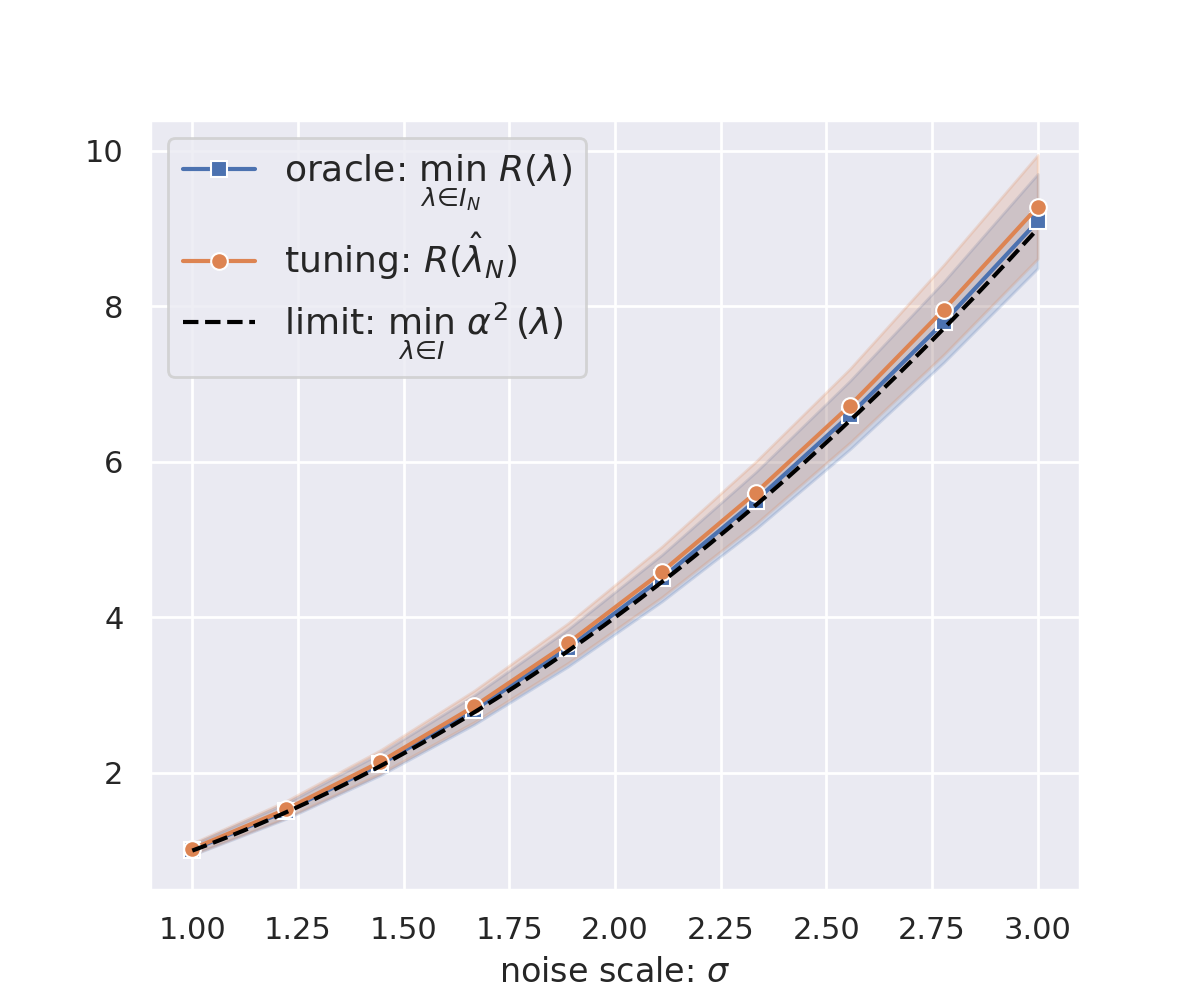}
        \caption{Adaptive tuning vs oracles}
        \label{subfig:tuning_oracle}
    \end{subfigure}
    \begin{subfigure}[b]{0.49\textwidth}
        \centering
        \includegraphics[width=\textwidth]{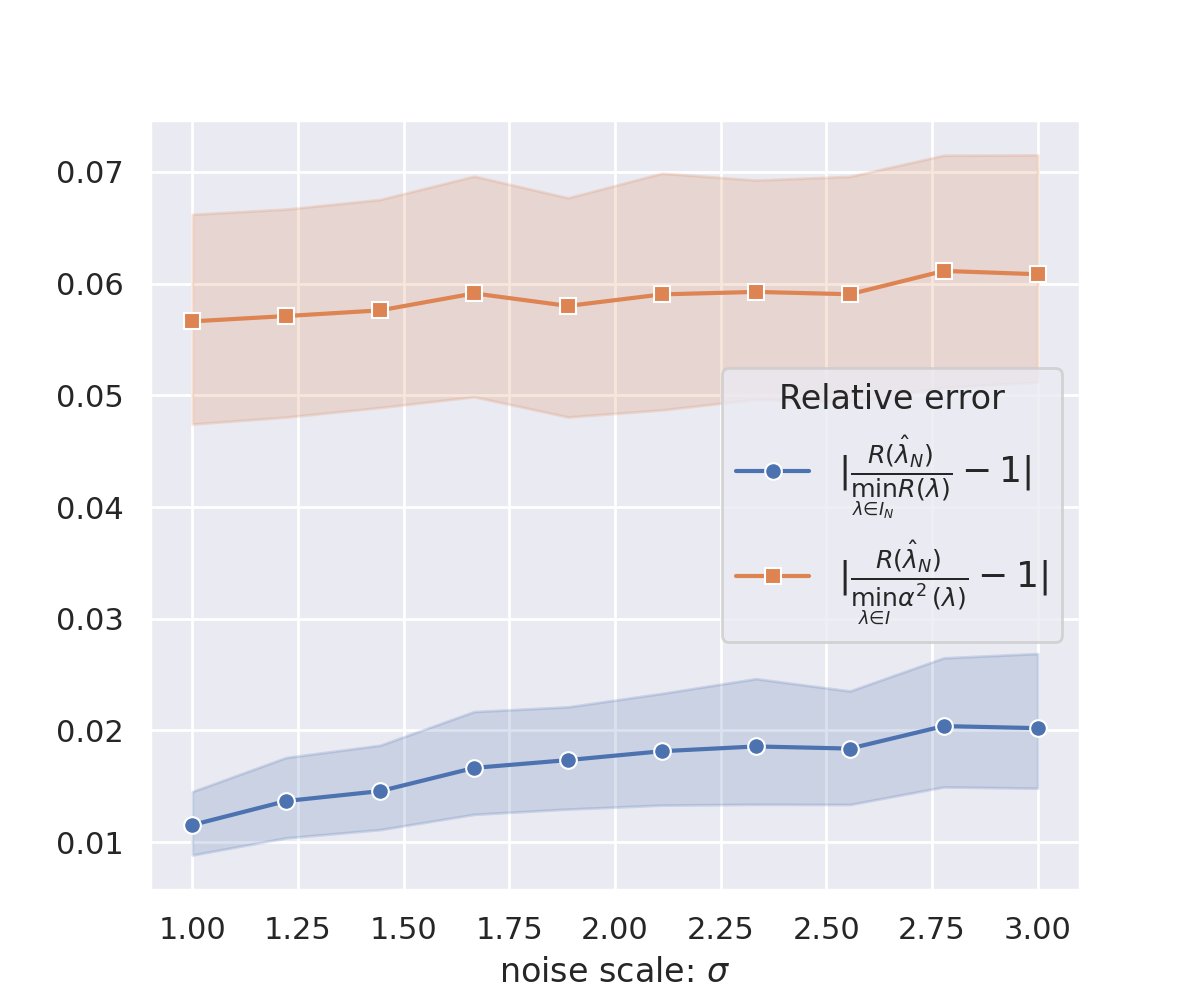}
        \caption{Relative error}
        \label{subfig:tuning_relative_error}
    \end{subfigure}
    \caption{
    \editline{
    Adaptive tuning with the scale of noise $\sigma$ changing as $F_\epsilon = \sigma \cdot \text{t-dist}(\df=2)$.
    {Here,}
    {$I=[1,10]$}, $n=4000$, $p=1200$, and $I_N$ is the uniform grid in log-scale of length $101$. 
    ${R}(\hat{\lambda}_N)$ is the out-of-sample error with $\lambda$ selected by $\hat{R}$, $\min_{\lambda\in I_N} R(\lambda)$ is the optimal out-of-sample error among $I_N$, and $\min_{\lambda\in I}\alpha^2(\lambda)$ is the theoretically optimal risk limit. We repeat $100$ times.
    }
    }
    \label{fig:tuning}
\end{figure}

\section{Outline of the proof}\label{sec:proof_highlight}
In this section, we give a sketch of proof of \Cref{th:ofs}. See \Cref{sec:proof_ofs} for rigorous proofs. 
Let $\bm G=\bm X \bm \Sigma^{-1/2} \in \mathbb{R}^{n\times p}$ so that $\bm G$ has i.i.d $\mathcal{N}(0,1)$ entries, and we denote by $\bm g_i$ the $i$-th row of $\bm G$. 
By the change of variable $\bm \beta \mapsto \bm h= \bm\Sigma^{1/2} (\bm\beta-\bm\beta^\star)$, 
we write the M-estimator $\hat{\bbeta}$ in \eqref{eq:intro_est} by
 $\hat{\bm \beta} = \bm \Sigma^{-1/2} \hat{\bm h} + \bm \beta^\star$, where  $\hat{\bm h}$ is defined as
\begin{align}
  \hat{\bm h} (\bm \epsilon, \bG) &\in \argmin_{\bm h\in \mathbb{R}^p} \frac{1}{n} \sum_{i} \rho(y_i - \bm x_i^\top (\bm \Sigma^{-1/2} \bm h + \bm \beta^\star)) = 
    \argmin_{\bm h\in \mathbb{R}^p} \frac{1}{n}\sum_{i=1}^n \rho(\epsilon_i - \bm g_i^\top \bm h). \label{eq:df_h}
\end{align}
Throughout, we regard $\hat{\bm h}$ as a function of $(\bm \epsilon, \bm G)$. Then, quantities of interest such as
$\|\bm \Sigma^{1/2} (\hat{\bm \beta}-\bm\beta_\star)\|$, $\psi(\bm y - \bm X\hat{\bm\beta})$, and $\bm V = (\partial/\partial \bm{y})\psi(\bm y-\bm X\hat{\bm\beta})$
can be  expressed as functions of $(\bm \epsilon, \bm G)$:
\begin{align*}
    \|\bm \Sigma^{1/2} (\hat{\bm \beta}-\bm\beta^\star)\| = \|\hat{\bm h}\|, \quad
    \psi(\bm y - \bm X\hat{\bm\beta}) =\psi(\bm\epsilon-\bm G \hat{\bm h})\in\R^{n}, \quad
    \bm V =(\partial/\partial \bm{\epsilon})\psi(\bm\epsilon - \bG \hat{\bbh})\in \R^{n\times n}.
\end{align*}
With the above notation, the statement of \Cref{th:ofs} is simplified to
\begin{align}\label{eq:target}
\|\hat \bbh \|^2= \frac{p}{\tr[\bm V]^2} \|\psi(\bm \epsilon - \bm G \hat{\bm h})\|^2 + o_P(1).
\end{align}
To prove \eqref{eq:target}, 
we consider the ridge-regularized M-estimator $\hat{\bbh}_\mu$
\begin{align}\label{eq:df_h_mu}
   \hat{\bm h}_\mu (\bm \epsilon, \bm G) := \argmin_{\bm h\in \mathbb{R}^p} \frac 1 n \sum_{i=1}^n \rho(\epsilon_i - \bm g_i^\top \bm h) + \frac{\mu}{2} \|\bm h\|^2.
\end{align}
for a diminishing regularization parameter $\mu=\mu_n\to 0$. 
Note that $\hat{\bm h}_\mu$ coincides with $\hat{\bm h}$ defined by \eqref{eq:df_h} when $\mu=0$. 
The benefit of considering $\hat{\bm h}_\mu$ 
is its rich differentiability structures that will be explained in \Cref{subsec:diff} ahead. 
Such derivative structure is helpful when we use probabilistic tools that involve derivatives, 
e.g., Stein's formula \editline{\citep{stein1981estimation,bellec2021second}}, the Gaussian Poincaré inequality \cite[Theorem 3.20]{boucheron2013concentration}, 
and normal approximations \citep{chatterjee2009fluctuations, bellec2023debiasing}, etc.
\subsection{Differentiability of M-estimators}\label{subsec:diff}
In this section, we introduce a useful differentiable structure of
the regularized M-estimator $\hat{\bm h}_\mu$ defined in \eqref{eq:df_h_mu}.  
To begin with, we discuss the differentiability of $\psi(\bm\epsilon-\bm G \hat{\bm h}_\mu)$ with respect to $\bm\epsilon$. 
\begin{proposition}[Proposition 4.1 in \cite{bellec2023out}]\label{prop:psi_lip}
Suppose that $\rho$ is convex and differentiable and that $\psi=\rho'$ is $1$-Lipschitz. Then, for any $M$-estimator of the form
$\hat{\bbeta} \in \argmin_{\bbeta\in \R^p} \sum_{i=1}^n \rho(\bm y_i - \bm x_i^\top\bbeta) + g(\bbeta)$ where $g: \R^p \to \R$ is a convex penalty, the map $\bm y \in \R^n \mapsto \psi(\bm y - \bm X \hat{\bbeta})\in \R^n$ is $1$-Lipschitz for almost every $\bm X\in \R^{n\times p}$.
\end{proposition}
Since $\psi=\rho'$ is 1-Lipschitz,
\Cref{prop:psi_lip} implies that the map $\bm \epsilon \mapsto \psi(\bm \epsilon -\bm G \hat{\bm h}_\mu (\bm \epsilon, \bm G))$ is $1$-Lipschitz for all $\mu\geq 0$. As a consequence, Rademacher's theorem gives the following:
\begin{align}\label{eq:df_Vmu}
 \forall \mu \geq 0, \quad \bV_\mu := (\partial/\partial \bm{\epsilon}) \psi(\bm \epsilon - \bm G \hat{\bm h}_\mu) \in \R^{n\times n} \text{ exists almost everywhere and }
 \|\bV_\mu\|_{op}\leq 1.
\end{align}
We emphasize that \eqref{eq:df_Vmu} also holds for $\mu=0$, so 
the original Jacobian matrix $\bm V = (\partial/\partial \bm{\epsilon})\psi(\bm \epsilon - \bm G\hat{\bm h})$ is also well-defined and $\|\bV\|_{op} \le 1$. 

The next theorem introduces a detailed differentiable structure of $\hat{\bbh}_\mu$ 
with respect to $\bG$ when $\mu$ is strictly positive.
\begin{theorem}[Theorem 1 in \cite{bellec2022derivatives}]\label{th:diff_smoothed}
Suppose $\mu > 0$. Then,
for almost every $(\bm\epsilon, \bm G)$, the map $(\bm\epsilon, \bm G)\mapsto \hat{\bm h}_\mu(\bm\epsilon, \bm G)$ defined by \eqref{eq:df_h_mu} is differentiable at almost every $(\bm \epsilon, \bm G)$, and 
the derivatives are given by 
\begin{align*}
    \forall i \in [n], \ \frac{\partial \hat{\bm h}_\mu}{\partial \epsilon_i} (\bm\epsilon,\bm G) &= \hat{\bm A}_\mu\bm G^\top \bm{e}_i \psi'(r_i),\\
    \forall i\in [n], \forall j\in [p], \ \frac{\partial \hat{\bm{h}}_\mu}{\partial g_{ij}} (\bm\epsilon, \bm G) &=\hat{\bm A}_\mu\bm{e}_j\psi(r_i)- \hat{\bm A}_\mu\bm G^\top \bm{e}_i\psi'(r_i) \bm{e}_j^\top \hat{\bm h}_\mu,
\end{align*}
where $\hat{\bm A}_\mu = (\bm G^\top \diag\{\psi'(\bm r)\} \bm G + n \mu I_p)^{-1} \in \mathbb{R}^{p\times p}$ 
and $\bm r = \bm\epsilon - \bm G \hat{\bm h}_\mu\in \mathbb{R}^n$. 
\end{theorem}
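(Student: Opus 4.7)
The approach is to derive the formulas from the first-order optimality condition of \eqref{eq:df_h_mu} via an implicit function argument, and to justify the almost-everywhere qualifier using Rademacher's theorem together with the a.e.\ differentiability of the Lipschitz function $\psi$. Because $\rho$ is convex and $\mu>0$, the objective in \eqref{eq:df_h_mu} is $\mu$-strongly convex in $\bm h$, so $\hat{\bm h}_\mu(\bm\epsilon,\bm G)$ is uniquely defined for every $(\bm\epsilon,\bm G)$ and the KKT condition reads
$$F(\bm h,\bm\epsilon,\bm G)\;:=\;-\tfrac{1}{n}\bm G^\top\psi(\bm\epsilon-\bm G\bm h)+\mu\bm h\;=\;\bm 0,\qquad \bm h=\hat{\bm h}_\mu.$$
A standard stability argument based on strong convexity (analogous to the one underlying \Cref{prop:psi_lip}) shows that $(\bm\epsilon,\bm G)\mapsto \hat{\bm h}_\mu$ is locally Lipschitz, hence by Rademacher's theorem it is differentiable at almost every $(\bm\epsilon,\bm G)$.

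Next, at any such differentiability point at which in addition $\psi$ is differentiable at each residual $r_i=\epsilon_i-\bm g_i^\top\hat{\bm h}_\mu$, I would differentiate the identity $F(\hat{\bm h}_\mu(\bm\epsilon,\bm G),\bm\epsilon,\bm G)=0$ via the chain rule. The Jacobian of $F$ in $\bm h$ is
$$\frac{\partial F}{\partial \bm h}\;=\;\tfrac{1}{n}\bigl(\bm G^\top\diag(\psi'(\bm r))\bm G+n\mu I_p\bigr)\;=\;\tfrac{1}{n}\hat{\bm A}_\mu^{-1},$$
which is invertible because $\psi'\ge 0$ (as $\psi$ is non-decreasing and $1$-Lipschitz) and $\mu>0$. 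A direct computation gives $\partial F/\partial \epsilon_i=-\tfrac{1}{n}\psi'(r_i)\bm G^\top\bm e_i$, so solving the linear system yields $\partial\hat{\bm h}_\mu/\partial\epsilon_i=\hat{\bm A}_\mu\bm G^\top\bm e_i\psi'(r_i)$. For $\partial/\partial g_{ij}$, both the explicit factor $\bm G^\top$ and the residual $\bm r$ depend on $g_{ij}$, producing two terms $\partial F/\partial g_{ij}=-\tfrac{1}{n}\bigl[\psi(r_i)\bm e_j-(\bm e_j^\top\hat{\bm h}_\mu)\psi'(r_i)\bm G^\top\bm e_i\bigr]$; inverting by $n\hat{\bm A}_\mu$ gives the second formula of the theorem.

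The main obstacle is the a.e.\ qualifier. At a Lipschitz point of $\hat{\bm h}_\mu$ we only know that $\psi$ is differentiable at Lebesgue-almost every point of $\R$, not necessarily at the particular values $r_1,\dots,r_n$. The way around this is to exploit the absolute continuity of Lebesgue measure on $(\bm\epsilon,\bm G)$: for any fixed $\bm G$ and any $i$, the map $\epsilon_i\mapsto r_i$ is a translation (with the other coordinates kept fixed), so the preimage under $(\bm\epsilon,\bm G)\mapsto r_i$ of the null set $N_\psi\subset\R$ on which $\psi$ fails to be differentiable is itself Lebesgue null; a union bound over $i=1,\dots,n$ absorbs this into the ``almost every'' clause. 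The formal differentiation above therefore holds on a set of full measure, giving the two identities. (A clean alternative is to approximate $\psi$ by a sequence of $C^1$ mollifications $\psi^{(\delta)}$, for which the computation above is classical, and to pass to the limit $\delta\downarrow 0$ using the Lipschitz bounds on $\hat{\bm h}_\mu$ and dominated convergence, matching the formulas a.e.)
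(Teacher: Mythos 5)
The theorem is imported verbatim from \cite{bellec2021second}; the present paper gives no proof of it, so there is no in-paper argument to compare against. Your implicit-function derivation from the KKT condition $F(\bm h,\bm\epsilon,\bm G)=-\tfrac1n\bm G^\top\psi(\bm\epsilon-\bm G\bm h)+\mu\bm h=\bm 0$ is the right skeleton, and the algebra for $\partial F/\partial\epsilon_i$ and $\partial F/\partial g_{ij}$ and the inversion by $n\hat{\bm A}_\mu$ reproduces the stated formulas correctly.

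There is, however, a concrete error in the step you flag as ``the main obstacle.'' You assert that for fixed $\bm G$ (and fixed $\epsilon_j$, $j\ne i$) the map $\epsilon_i\mapsto r_i$ is a translation. It is not: $r_i=\epsilon_i-\bm g_i^\top\hat{\bm h}_\mu(\bm\epsilon,\bm G)$ and $\hat{\bm h}_\mu$ depends on $\epsilon_i$ (indeed the theorem itself says $\partial\hat{\bm h}_\mu/\partial\epsilon_i=\hat{\bm A}_\mu\bm G^\top\bm e_i\psi'(r_i)\ne\bm 0$ in general). Consequently the preimage of a Lebesgue-null set under $\epsilon_i\mapsto r_i$ is not automatically null; a merely Lipschitz map (which is all one gets unconditionally) can collapse sets of positive measure onto a null set. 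What would rescue the argument is that the map is strictly increasing with derivative bounded below: formally $\partial r_i/\partial\epsilon_i=1-\psi'(r_i)\,\bm g_i^\top\hat{\bm A}_\mu\bm g_i$, and since $\bm G^\top\diag(\psi'(\bm r))\bm G+n\mu I_p\succeq\psi'(r_i)\bm g_i\bm g_i^\top+n\mu I_p$ the Sherman--Morrison identity gives $\psi'(r_i)\,\bm g_i^\top\hat{\bm A}_\mu\bm g_i\le\psi'(r_i)\|\bm g_i\|^2/(\psi'(r_i)\|\bm g_i\|^2+n\mu)<1$. But invoking $\psi'(r_i)$ here is circular, since its existence at the residuals is exactly what you are trying to establish. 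The actual argument in \cite{bellec2021second} handles this point carefully (essentially by establishing a bi-Lipschitz / nonvanishing-Jacobian property of $\bm\epsilon\mapsto\bm r$ without presupposing $\psi'(\bm r)$ exists, or by a mollification argument carried out with enough care to control the Jacobians, not just the values). Your parenthetical mollification alternative points in the right direction but also glosses over the hard part: pointwise convergence $\hat{\bm h}_\mu^{(\delta)}\to\hat{\bm h}_\mu$ and uniform Lipschitz bounds give weak-$*$ convergence of the derivatives, not a.e.\ convergence to the claimed closed form, so additional work is needed there too.
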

See \cite{bellec2022derivatives} for the differentiability of regularized M-estimators for general strongly convex penalties.  
Thanks to the strict positiveness of $\mu$, the matrix $\hat{A}_\mu$ is always well-defined and its operator norm is bounded from above by $(n\mu)^{-1}$.
By \Cref{th:diff_smoothed} and the chain rule, we find that $\psi(\bm \epsilon - \bm G\hat{\bm h}_\mu)$ is also differentiable at $(\bm \epsilon, \bm G)$ when $\mu >0$, 
with the derivatives given by
\begin{align*}
  (\partial/\partial g_{ij}) \psi(\bm \epsilon - \bm G \hat{\bm h}_\mu) &= -\diag\{\psi'(\bm r)\} \bm G \hat{\bm A}_\mu \bm{e}_j\psi(r_i) - \bm{V}_\mu \bm{e}_i \bm{e}_j^\top \hat{\bm h}_\mu \in \R^n,\\
    \bm V_\mu = (\partial/\partial \bm{\epsilon}) \psi(\bm \epsilon- \bm G \hat{\bm h}_\mu) 
    &= \diag\{\psi'(\bm r)\} - \diag\{\psi'(\bm r)\} \bG \hat{\bm A}_\mu \bG^\top \diag\{\psi'(\bm r)\} \in \R^{n\times n}.
\end{align*}
Note that the derivative formula in  \Cref{th:diff_smoothed} is only guaranteed for the smoothed M-estimator $\hat{\bbh}_\mu$ with a positive ridge parameter $\mu$, and not for 
the original M-estimator $\hat{\bbh}$ ($\mu=0$). This is the benefit of considering the smoothed M-estimator $\hat{\bbh}_\mu$. 

Next, we will use those derivative formulae to prove  the
interplay between  $\|\hat{\bbh}_\mu\|^2$, $\tr[\bV_\mu]$, and 
$\|\psi(\bm\epsilon-\bG \hat{\bbh}_\mu)\|^2$. 
\subsection{{Consistency of the risk estimate for} the smoothed M-estimator}\label{subsec:rep_smoothed}
We have discussed the differentiable structure of the regularized M-estimator $\hat{\bm h}_\mu$.
  In this section, we 
{
  derive the key relationship \eqref{eq:key_mu} below
}
  between
  $\|\hat{\bm h}_\mu\|^2$, $\tr[\bm V_\mu]$, 
  and $\|\psi(\bm \epsilon-\bm G \hat{\bm h}_\mu)\|^2$,
  {which can be seen as an extension of
      \eqref{eq:ofs_strong_convex} to M-estimators
      with a Ridge penalty function that vanishes polynomially
      in \( n \) as \( n\to+\infty \).
  }
\begin{lemma}\label{lm:str_smoothed}
\editline{Let \Cref{as:loss} and \ref{as:noise} be fulfilled.}
Let $\hat{\bh}_\mu$ be the regularized M-estimator with a ridge penalty $\mu\|\bh\|^2/2$ defined in \eqref{eq:df_h_mu}. 
If $\mu = n^{-c}$ for some $c\in (0, 1/4]$, we have
\begin{equation}
    \label{eq:key_mu}
    \frac{p}{n} \|\bm\psi_\mu\|^2 -  \frac{\tr[\bm V_\mu]^2}{n} \|\hat{\bm h}_\mu\|^2= O_P(n^{1-c}),
\end{equation}
where $\bm\psi_\mu$ is the vector $\psi(\bm\epsilon - \bm G \hat{\bm h}_\mu)\in \R^n$ and 
$\bV_\mu$ is the Jacobian matrix $(\partial/\partial \bm{\epsilon})\psi(\bm{\epsilon}-\bG\hat{\bh}_\mu)\in \R^{n\times n}$.
\end{lemma}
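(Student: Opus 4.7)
The plan is to combine the KKT conditions for the smoothed problem with a SURE-type identity (in the spirit of \eqref{eq:ofs_strong_convex}) obtained from second-order Stein's formula, exploiting the explicit Jacobian formulas of \Cref{th:diff_smoothed} that are available for \( \mu>0 \). The three main steps are KKT, a controlled SURE identity, and a concentration bound on \( \tr[\hat{\bm A}_\mu] \) coming from condition (3) of \Cref{as:loss}.

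\textbf{Step 1 (KKT and effective df).} First-order optimality for \eqref{eq:df_h_mu} yields \( \bG^\top\bpsi_\mu + n\mu\hat{\bh}_\mu = \bm 0 \), so
\[
\|\bG^\top\bpsi_\mu\|^2 = n^2\mu^2\,\|\hat{\bh}_\mu\|^2 = O_P(n^{2-2c}).
\]
Writing \( \bm D := \diag\{\psi'(\br)\} \) with \( \br = \beps-\bG\hat{\bh}_\mu \), the chain rule applied to the formula of \Cref{th:diff_smoothed} gives the effective degree of freedom
\[
\hat{\df}_\mu := \tr\bigl[(\partial/\partial\beps)(\bG\hat{\bh}_\mu)\bigr]
= \tr[\hat{\bm A}_\mu\bG^\top\bm D\bG] = p - n\mu\tr[\hat{\bm A}_\mu],
\]
where the last equality uses \( \hat{\bm A}_\mu^{-1}=\bG^\top\bm D\bG+n\mu\bm I_p \).

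\textbf{Step 2 (SURE-type identity).} I would establish the analogue of \eqref{eq:ofs_strong_convex} for the vanishing Ridge penalty \( (\mu/2)\|\bh\|^2 \):
\[
\tr[\bV_\mu]^2\,\|\hat{\bh}_\mu\|^2 = (2\hat{\df}_\mu-p)\,\|\bpsi_\mu\|^2 + \|\bG^\top\bpsi_\mu\|^2 + O_P(n^{2-c}).
\]
The derivation applies second-order Stein to \( \|\bG^\top\bpsi_\mu\|^2 = \sum_{j=1}^p\bigl(\sum_i g_{ij}\psi(r_i)\bigr)^2 \): Gaussian integration by parts with the Jacobians of \Cref{th:diff_smoothed} produces the right-hand side, and the Gaussian Poincar\'e inequality controls the fluctuations. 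The relevant operator bounds are \( \|\bV_\mu\|_{\mathrm{op}}\le 1 \) from \eqref{eq:df_Vmu}, \( \|\hat{\bm A}_\mu\|_{\mathrm{op}}\le(n\mu)^{-1}=n^{c-1} \), and \( \|\bG\|_{\mathrm{op}}=O_P(\sqrt n) \). Substituting \( p-\hat{\df}_\mu=n\mu\tr[\hat{\bm A}_\mu] \) and the KKT identity rearranges the above to
\[
p\,\|\bpsi_\mu\|^2 - \tr[\bV_\mu]^2\,\|\hat{\bh}_\mu\|^2 = 2n\mu\,\tr[\hat{\bm A}_\mu]\,\|\bpsi_\mu\|^2 - n^2\mu^2\,\|\hat{\bh}_\mu\|^2 + O_P(n^{2-c}).
\]

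\textbf{Step 3 (Closing).} The second term on the right is \( O_P(n^{2-2c})\le O_P(n^{2-c}) \) since \( \|\hat{\bh}_\mu\|=O_P(1) \). For the first term, the naive bound \( \tr[\hat{\bm A}_\mu]\le p/(n\mu)=O(n^c) \) is insufficient. Instead, I would use condition (3) of \Cref{as:loss}: if \( \psi'(r_i)<\eta/2 \) then \( |\psi(r_i)|\ge(\eta/2)^{1/2}\|\psi\|_\infty \), and since \( \|\bpsi_\mu\|^2=O_P(n) \) with a constant strictly smaller than \( n\|\psi\|_\infty^2 \) under the statistical model, the set \( \{i: \psi'(r_i)\ge\eta/2\} \) has size \( \Omega(n) \) with high probability. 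Combined with standard Gaussian matrix concentration, this gives \( \sigma_{\min}(\bG^\top\bm D\bG)\gtrsim n \), whence \( \tr[\hat{\bm A}_\mu]=O_P(p/n)=O_P(1) \). The first term is then \( O_P(n\mu\cdot 1\cdot n)=O_P(n^{2-c}) \); dividing by \( n \) gives the claim.

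\textbf{Main obstacle.} The core technical difficulty is Step 2: the second-order Stein calculation for \( \|\bG^\top\bpsi_\mu\|^2 \) produces numerous correction terms that involve \( \hat{\bm A}_\mu \) via the Jacobians of \Cref{th:diff_smoothed}, each of which must be Poincar\'e-bounded. The constraint \( c\in(0,1/4] \) enters precisely here: the chi-squared fluctuations scale like \( \sqrt{n}\,\times\,(\text{Jacobian norms}) \), and each factor of \( \hat{\bm A}_\mu \) inflates a Jacobian norm by \( n^c \) (through \( \|\hat{\bm A}_\mu\|_{\mathrm{op}}\le n^{c-1} \)); the threshold \( c\le 1/4 \) is exactly what keeps the compounded fluctuations below the target \( n^{2-c} \) after the interplay of squaring and summing.
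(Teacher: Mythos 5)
Your Step~3 has a genuine gap: you assert \( \tr[\hat{\bm A}_\mu]=O_P(1) \) via ``condition (3) of \Cref{as:loss} plus standard Gaussian matrix concentration.'' The required statement is a lower bound of order \( n \) on \( \sigma_{\min}(\bG^\top\bm D_\mu\bG) \) where \( \bm D_\mu=\diag\{\psi'(\br)\} \) and \( \br=\beps-\bG\hat{\bh}_\mu \). But \( \bm D_\mu \) depends on \( \bG \) through \( \hat{\bh}_\mu \), so the rows indexed by \( \{i:\psi'(r_i)\ge\eta/2\} \) are not independent of that index set, and ``standard'' concentration does not apply. Establishing such a minimum-eigenvalue bound is precisely the hard step in the leave-one-out literature (e.g.\ El~Karoui); moreover, the closest available tool in this paper, the lower bound on \( \tr[\bV]/n \) in \Cref{lm:lb_trace}, is itself proved by invoking \Cref{lm:str_smoothed}, so leaning on that would be circular. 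Without \( \tr[\hat{\bm A}_\mu]=O_P(1) \), your rearranged identity leaves the term \( 2n\mu\tr[\hat{\bm A}_\mu]\|\bpsi_\mu\|^2 \) at only \( O_P(n^2) \) (using the naive \( \tr[\hat{\bm A}_\mu]\le p(n\mu)^{-1}=O(n^c) \)), which is far too large.

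The paper's proof sidesteps this entirely by using a different decomposition. Applying \Cref{lm:chi_square_bound} to \( (\bpsi_\mu,\hat{\bh}_\mu) \), it compares \( \tfrac{p}{n}\|\bpsi_\mu\|^2 \) directly to \( \tfrac{1}{n}\sum_j(\bpsi_\mu^\top\bG\bm e_j-\sum_i\partial\psi_i/\partial g_{ij})^2 \), and then uses the KKT identity and \Cref{th:diff_smoothed} to write the inner quantity as \( \bm e_j^\top(n\mu\hat{\bh}_\mu+\hat{\bm A}_\mu\bG^\top\bm D_\mu\bpsi_\mu+\tr[\bV_\mu]\hat{\bh}_\mu) \). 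The error terms that remain after isolating \( \|\tr[\bV_\mu]\hat{\bh}_\mu\|^2 \) involve \( \|\hat{\bm A}_\mu\bG^\top\bm D_\mu\bpsi_\mu\| \) rather than \( \tr[\hat{\bm A}_\mu] \); the naive bound \( \|\hat{\bm A}_\mu\|_{\mathrm{op}}\le(n\mu)^{-1} \) then gives \( \|\hat{\bm A}_\mu\bG^\top\bm D_\mu\bpsi_\mu\|=O_P(n^c) \), which is small enough. The \( c\le1/4 \) threshold is not about ``compounded Jacobian factors'' as you suggest; it arises from the chi-square fluctuation bound \( C(\sqrt{n+p}(1+\Xi^{1/2})+\Xi) \) with \( \Xi=O(n^{2c}) \): the dominant fluctuation is \( \sqrt{n}\,n^c=n^{1/2+c} \), and requiring this to be \( \le n^{1-c} \) gives \( c\le1/4 \). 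Your Steps~1 and~2 are sound in spirit (Step~1 is exactly what the paper does; Step~2 is a rearranged version of the chi-square inequality), but the decomposition you chose makes Step~3 necessary, and Step~3 as written would not survive scrutiny.
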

The proof is given in \Cref{proof_lemma_1}.
It is based on the following moment inequality. 

\begin{theorem}[Theorem 7.2 in \cite{bellec2023out}]\label{th:chi_square}
Assume that $\bG = (g_{ij})_{ij} \in \mathbb{R}^{n\times p}$ has i.i.d $\mathcal{N}(0,1)$ entries, 
and that $\bm f: \mathbb{R}^{n\times p} \to \mathbb{R}^n$ is a differentiable function with 
$\|\bm f\|^2 \leq 1$. Then, there exists an absolute constant $C>0$ such that
\begin{align}
\E
\Bigl[\Big|p \|\bm f\|^2 - \sum_{j=1}^p (\bm f^\top \bG \bm{e}_j - \sum_{i=1}^n\frac{\partial f_i}{\partial g_{ij}})^2 \Big|\Bigr]
\leq C \Bigl[ \sqrt{p} \E [(1+ \sum_{ij} \|\frac{\partial \bm f}{\partial g_{ij}}\|^2)^{1/2}
]+ \E [\sum_{ij} \|\frac{\partial \bm f}{\partial g_{ij}}\|^2]\Bigr]\label{eq:chi_square}
\end{align}
\end{theorem}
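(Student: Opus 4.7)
The plan is to view $\xi_j := \bm f^\top \bG \bm e_j - \sum_{i=1}^n \partial f_i/\partial g_{ij}$ as the Stein-corrected projection of $\bm f$ onto the $j$-th column of $\bG$. If $\bm f$ were deterministic with $\|\bm f\|=1$, the $\xi_j$ would be i.i.d.\ $\N(0,1)$ and $F:=\sum_j \xi_j^2$ would be exactly $\chi^2_p$, concentrating around $p$ with Gaussian deviation $O(\sqrt p)$. The divergence correction $D_j := \sum_i \partial f_i/\partial g_{ij}$ removes the bias that Gaussian integration by parts would otherwise produce because $\bm f$ depends on $\bG$. The two terms on the right-hand side of \eqref{eq:chi_square} correspond respectively to (a) deviation of $F$ around its mean and (b) residual bias of that mean against $p\,\E[\|\bm f\|^2]$; I would prove each separately and combine via $\E|p\|\bm f\|^2 - F| \le |\E[F] - p\,\E[\|\bm f\|^2]| + \mathrm{Var}(F)^{1/2}$.

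For the bias I would expand $\xi_j^2 = (\bm f^\top \bG \bm e_j)^2 - 2(\bm f^\top \bG \bm e_j)D_j + D_j^2$ and apply the first-order Stein identity $\E[g_{ij}H(\bG)]=\E[\partial H/\partial g_{ij}]$ twice: first to $(\bm f^\top \bG \bm e_j)^2 = \sum_i f_i g_{ij}\cdot \bm f^\top \bG \bm e_j$ and then to the cross term involving $g_{kj}$ that this produces. Careful bookkeeping shows that the diagonal $f_i^2$ contributions sum to $p\|\bm f\|^2$, the $D_j^2$ contributions cancel (three copies with signs $+1,-2,+1$), the second-order terms $\partial^2 f_k/\partial g_{ij}\partial g_{kj}$ cancel in pairs due to symmetry, and the only surviving remainder is
\begin{equation*}
\sum_j \E[\xi_j^2] - p\,\E[\|\bm f\|^2] \;=\; \sum_{i,j,k}\E\bigl[(\partial f_i/\partial g_{kj})(\partial f_k/\partial g_{ij})\bigr].
\end{equation*}
Recognising the right-hand side for fixed $j$ as the trace of the square of the $n\times n$ Jacobian block $(\partial f_i/\partial g_{kj})_{i,k}$, a Frobenius-norm bound yields $|\E[F] - p\,\E[\|\bm f\|^2]| \le \E[\sum_{ij}\|\partial \bm f/\partial g_{ij}\|^2]$, matching the second term on the RHS of \eqref{eq:chi_square}.

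For the concentration I would apply the Gaussian Poincar\'e inequality $\mathrm{Var}(F)\le \E[\|\nabla F\|^2]$. Writing $\partial_{ij}\xi_l = f_i\delta_{jl} + \sum_k (\partial f_k/\partial g_{ij})g_{kl} - \partial_{ij}D_l$ and using Cauchy-Schwarz in the decomposition $\partial_{ij}F = 2\sum_l \xi_l\,\partial_{ij}\xi_l$, one obtains $\|\nabla F\|^2 \le 4 F\cdot \sum_{ij,l}(\partial_{ij}\xi_l)^2$, where the inner sum is bounded by $p\|\bm f\|^2 + \|\bG\|_{op}^2 \sum_{ij}\|\partial \bm f/\partial g_{ij}\|^2$ up to the same pair-cancelling second-order terms as above. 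Standard Gaussian bounds $\E\|\bG\|_{op}^2\lesssim n$, the normalisation $\|\bm f\|^2\le 1$, and the proportional regime $p\le n$ then combine with a self-bounding estimate (using $F \lesssim p + \sum_{ij}\|\partial \bm f/\partial g_{ij}\|^2$ from the bias step) to yield $\mathrm{Var}(F)^{1/2}\lesssim \sqrt p\,\E[(1+\sum_{ij}\|\partial \bm f/\partial g_{ij}\|^2)^{1/2}]$, matching the first term on the RHS.

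The main technical obstacle is that $\partial_{ij}D_l$ formally involves \emph{second} derivatives of $\bm f$, which need not exist under the bare hypothesis of first-order differentiability. The standard fix is to first prove the inequality for a mollified $\bm f_\varepsilon := \bm f * \eta_\varepsilon$ for which all higher derivatives exist, then pass to the limit $\varepsilon\to 0$ by noting that only first-order Sobolev norms of $\bm f$ appear in the final RHS and that these are stable under approximation by convolution.
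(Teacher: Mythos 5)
First, note that the paper does not actually prove this statement: \Cref{th:chi_square} is imported verbatim from Theorem 7.2 of \cite{bellec2023out}, and the text only records that the proof there rests on the Gaussian Poincar\'e inequality and the second-order Stein formula (\Cref{th:second}). Measured against that route, your bias step is sound and is in fact exactly the second-order Stein identity applied column by column: your remainder $\sum_{i,j,k}\E[(\partial f_i/\partial g_{kj})(\partial f_k/\partial g_{ij})]=\sum_j\E\,\tr[(\partial\bm f/\partial(\bG\bm e_j))^2]$ is bounded in absolute value by $\E[\sum_{ij}\|\partial\bm f/\partial g_{ij}\|^2]$, and the regularity issue can be handled by mollification or by invoking the formula for weakly differentiable $\bm f$.

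The deviation part, however, contains genuine gaps. Your master inequality $\E|p\|\bm f\|^2-F|\le|\E F-p\,\E\|\bm f\|^2|+\mathrm{Var}(F)^{1/2}$ is false, because $p\|\bm f\|^2$ is itself random (take $F$ constant and $\|\bm f\|^2$ non-constant to see it fail); repairing it by adding $\mathrm{Var}(p\|\bm f\|^2)^{1/2}$, which Poincar\'e only bounds by roughly $p\,(\E\Xi)^{1/2}$ with $\Xi=\sum_{ij}\|\partial\bm f/\partial g_{ij}\|^2$, overshoots the claimed right-hand side by a factor of order $\sqrt p$ whenever $\Xi\gtrsim 1$ --- precisely the regime of the application in \Cref{lm:str_smoothed}, where $\Xi=O(n^{2c})$. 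The whole content of \eqref{eq:chi_square} is that the fluctuations of $F=\sum_j\xi_j^2$ track those of $p\|\bm f\|^2$, so any argument that bounds them separately cannot reach the stated rate; indeed your claimed conclusion $\mathrm{Var}(F)^{1/2}\lesssim\sqrt p\,\E[(1+\Xi)^{1/2}]$ is itself false in general, since $F\approx p\|\bm f\|^2$ forces $\mathrm{Var}(F)^{1/2}$ to be of order $p\cdot\mathrm{std}(\|\bm f\|^2)$. Moreover, even on its own terms the Poincar\'e computation does not close: the term $\E[F\cdot p\|\bm f\|^2]$ already yields $\mathrm{Var}(F)^{1/2}\gtrsim p$, the factor $\|\bG\|_{op}^2\approx n$ produces an extra $\sqrt n$, and the squared second-derivative terms $(\partial_{ij}D_l)^2$ do not cancel in a sum of squares (unlike the signed cancellation in the bias step) and are not controlled by any quantity appearing in \eqref{eq:chi_square}; mollification does not help here because the bound would have to be uniform in the smoothing parameter, whereas in the intended application $\bm f\propto\psi(\bm\epsilon-\bG\hat{\bm h}_\mu)$ with the Huber $\psi$, whose second derivatives blow up under smoothing. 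A correct proof must exploit the cancellation between $\sum_j\xi_j^2$ and $p\|\bm f\|^2$ jointly, e.g.\ via per-column conditioning on $\bG\bm e_j\sim\N(0,\bm I_n)$ combined with the second-order Stein identity, as in the cited reference.
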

Above, we have used $\sum_{ij}=\sum_{i=1}^n\sum_{j=1}^p$ for brevity.
The proof is based on the Gaussian Poincaré inequality (cf. \cite[Theorem 3.20]{boucheron2013concentration}) and the second order Stein's formula 
(see \cite{bellec2021second} or \Cref{th:second}). 

Below, we describe a sketch of proof of \Cref{lm:str_smoothed}. To begin with, we take 
$\bm f= \bm \psi_\mu/(n^{1/2} \|\psi\|_{\infty})$ in \Cref{th:chi_square} so that 
$\|\bm f\|^2 \leq 1$.  
By the derivative formula in \Cref{th:diff_smoothed} with $\|\hat{\bm A}_\mu\|_{op} = \|(\bm G^\top \diag\{\psi'(\bm r)\} \bm G + n \mu I_p)^{-1}\|_{op} \le (n\mu)^{-1} = n^{-1+c}$, we can show that the right-hand side  of \eqref{eq:chi_square} 
is negligible compared to the left-hand side. As a consequence, we have
\begin{align*}
     \frac{p}{n}\|\bpsi_\mu\|^2 \approx \frac{1}{n} \sum_{j=1}^p \Bigl(
        \bpsi_\mu \bG^\top \bm e_j - \sum_{i=1}^n \bm e_i^\top \frac{\partial \bpsi_\mu}{\partial g_{ij}}
    \Bigr)^2.
\end{align*}
Using \Cref{th:diff_smoothed} and the \editline{KKT} condition $\bG^\top \bpsi_\mu = n\mu \hat{\bbh}_\mu$, the terms inside the square can be written explicitly by 
\begin{align*}
    \bpsi_\mu \bG^\top \bm e_j - \sum_{i=1}^n \bm e_i^\top \frac{\partial \bpsi_\mu}{\partial g_{ij}}
    =  \left(n\mu \hat{\bbh}_\mu^\top +
    \bpsi_\mu^\top 
    \diag{\psi'(\bm r)} \bG \hat{\bm A}_\mu  + \tr(\bV_\mu) \hat{\bbh}_\mu^\top\right) \bm e_j,
\end{align*} 
for all $j\in [p]$, 
where $\hat{\bm A}_\mu = (\bm G^\top \diag\{\psi'(\bm r)\} \bm G + n \mu I_p)^{-1}$. 
Combining the above two displays, we obtain
\begin{align*}
    \frac{p}{n} \|\bpsi_\mu\|^2
    \approx \frac{1}{n} \left\|n\mu \hat{\bbh}_\mu  +
    \hat{\bm A}_\mu^\top \bG^\top \diag{\psi'(\bm r)} \bpsi_\mu + \tr(\bV_\mu) \hat{\bbh}_\mu\right\|^2.
\end{align*}
Thanks to $\mu = n^{-c}$ and $\|\hat{\bm A}_\mu\|_{op} \leq (n\mu)^{-1}$, $\|n\mu \hat{\bbh}_\mu\|$ and 
$\|\hat{\bm A}_\mu^\top \bG^\top \diag{\psi'(\bm r)} \bpsi_\mu\|$ are negligible compared to $\tr[\bV_\mu]\hat{\bbh}_\mu$, so that we obtain
$p/n \cdot \|\bpsi_\mu\|^2 \approx n^{-1} \|\tr[\bV_\mu] \hat{\bbh}_\mu\|^2$ as desired. 

\subsection{%
\editline{Back} to the original M-estimator}
Finally, we derive the target \eqref{eq:target} from \Cref{lm:str_smoothed}. Toward that, we will show that the quantities
of smoothed M-estimator,  i.e., $(\hat{\bbh}_\mu, \bpsi_\mu, \tr[\bV_\mu])$,
are close to $(\hat{\bbh}, \bpsi, \tr[\bV])$ under $\mu=\mu_n =n^{-c}$ for some $c\in(0,1/4]$.  
More precisely, we prove the following:
\begin{align}
    &{\|\hat{\bh}_\mu-\hat{\bh}\|^2 = o_P(1)}, &&\|\hat{\bh}\|^2 = O_P(1), 
    \label{eq:h_diff}\\
    &\|\bpsi_\mu\|^2 - \|\bpsi\|^2 = {o_P}(n^{1-\frac{c}{2}}), &&\|\bpsi_\mu-\bpsi\|_2 = O_P(n^{\frac{1-c}{2}}), 
    \label{eq:psi_diff}\\ 
    &\tr[\bV_\mu]^2 - \tr[\bV]^2 = {o_P}(n^{2-\frac{c}{2}})
    %,
    {.}
    \label{eq:V_diff}
\end{align}
Now we assume that the above displays are justified. Then, combined with \Cref{lm:str_smoothed}, we have
\begin{align*}
    \frac{p}{n^2} \|\bm \psi\|^2 - \frac{\tr[\bm V]^2}{n^2}\|\hat{\bm h}\|^2 &= \left(\frac{p}{n^2} \|\bm \psi_\mu\|^2 - \frac{\tr[\bm V_\mu]^2}{n^2}\|\hat{\bm h}\|^2\right)
    +\frac{p}{n^2} (\|\bm \psi|_2^2 - \|\bm \psi_\mu\|^2)  \\
    &\qquad + \frac{\tr[\bm V_\mu]^2 - \tr[\bm V]^2}{n^2}\|\hat{\bm h}_\mu\|^2 - \frac{\tr[\bm V]^2}{n^2}(\|\hat{\bm h}\|^2 - \|\hat{\bm h}_\mu\|^2)  \\
    &= O_P(n^{-c}) + {o_P}(n^{-\frac{c}{2}}) +  {o_P}(n^{-\frac{c}{2}}) - \frac{\tr[\bm V]^2}{n^2}(\|\hat{\bm h}\|^2 - \|\hat{\bm h}_\mu\|^2)\\
    &= {o_P}(n^{-\frac{c}{2}}) -\frac{\tr[\bm V]^2}{n^2} o_P(1).
\end{align*}
Furthermore, we will argue that $n^2/\tr[\bm V]^2 = O_P(1)$, i.e.,  $\tr(\bV)/n$ is not too small (see \Cref{lm:lb_trace}).
Thus, dividing by $\tr[\bm V]^2/n^2$ on the above display, we obtain the target \eqref{eq:target}.

Below, we describe a sketch of proof of \eqref{eq:h_diff}, \eqref{eq:psi_diff}, and \eqref{eq:V_diff}. 
For \eqref{eq:h_diff}, we will show that $\|\hat{\bm h}\|^2$ and $\|\hat{\bm h}_\mu\|^2$ converge to the same finite limit
by the main theorem in \cite{thrampoulidis2018precise} and \Cref{th:system}. 
\eqref{eq:psi_diff} easily follows from the two \editline{KKT} conditions and the Lipschitz property of $\psi$. 
The most technical part is \eqref{eq:V_diff}. Thanks to \Cref{as:noise}, the noise vector $\bm{\epsilon}$ can be represented as
$$
\bm{\epsilon} = \bm{z} + \bm{\delta}, \quad \bm{z}\indep \bm{\delta}, \quad (z_i)_{i=1}^n \iid F, \quad (\delta_i)_{i=1}^n \iid \tilde{F}, 
$$
where $F$ has the density $z\mapsto\exp(-\phi(z))$.
By the chain rule, $ \bV - \bV_\mu $ can be written as 
\begin{equation}\label{eq:identity}
    \bV - \bV_\mu = \frac{\partial}{\partial \bm{\epsilon}} (\bpsi - \bpsi_\mu) = \frac{\partial\bm{z}}{\partial\bm{\epsilon}}\cdot \frac{\partial}{\partial \bm{z}} (\bpsi - \bpsi_\mu) = \bm{I}_n \cdot \frac{\partial}{\partial \bm{z}}(\bpsi-\bpsi_\mu) = \frac{\partial}{\partial \bm{z}}(\bpsi-\bpsi_\mu).     
\end{equation}
Here, by the second order Stein's formula extended to the density of the form $\exp(-\phi(z))$, we will show that $\tr[\bV]-\tr[\bV_\mu]$ concentrates around $\phi'(\bm{z})^\top (\bpsi-\bpsi_\mu)$. 
\begin{theorem}[Equation (2.15) in \cite{bellec2021second}]\label{th:second}
Assume that the random vector $\bm{z}\in\R^n$ has density $\bm{z}\mapsto \exp(-\phi(\bm{z}))$ where $\phi:\R^n\to\R$ is twice continuously differentiable function with Hessian $H(\bm z)=(\partial/\partial \bm{z})^2 \phi(\bm{z})\in \R^{n\times n}$. 
Then, for any differentiable function $\bm f: \R^n \to \R^n$, we have
\begin{align*}
    \E\Bigl[\Bigl( \frac{\partial\phi(\bm z)}{\partial \bm{z}}  \bm f(\bm{z}) - \tr[\frac{\partial \bm f(\bm z)}{\partial \bm z}]\Bigr)^2\Bigr] = \E\Bigl[\bm{f}(\bm{z})^\top \bm{H}(\bm{z}) \bm{f}(\bm{z}) + \tr[(\frac{\partial \bm f(\bm z)}{\partial \bm z})^2]\Bigr],
\end{align*}
provided that the expectation on the right-hand side exists.
\end{theorem}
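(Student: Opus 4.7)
The plan is to prove this second-order Stein identity by applying the first-order Stein formula for the density $\exp(-\phi)$ twice and then identifying a cancellation. Let me denote $S(\bm z) := (\partial\phi/\partial\bm z)\, \bm f(\bm z) - \tr[(\partial/\partial \bm z)\bm f(\bm z)]$, so the left-hand side is $\E[S^2]$, and write $\bm F(\bm z) := (\partial/\partial \bm z)\bm f(\bm z)$ for the Jacobian, so $\tr\bm F = \diver \bm f$.

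First I would establish the building block: integration by parts against the density $\exp(-\phi)$ gives, for any smooth vector field $\bm u$ whose product with $\exp(-\phi)$ decays at infinity,
$$\E[\diver \bm u] = \E[(\partial\phi/\partial\bm z)\, \bm u].$$
Applying this with $\bm u = \bm f h$ for a scalar test function $h$ and using the product rule $\diver(\bm f h) = h\,\diver \bm f + \bm f^\top \nabla h$, I obtain the duality
$$\E[S\, h] = \E[\bm f^\top \nabla h].$$
Choosing $h = S$ itself yields $\E[S^2] = \E[\bm f^\top \nabla S]$, which reduces the problem to computing $\nabla S$ and taking a single expectation.

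A direct computation gives $(\nabla S)_j = (\bm H \bm f)_j + (\bm F^\top (\partial\phi/\partial\bm z)^\top)_j - (\nabla \diver \bm f)_j$, where I interchanged the order of second partials in the last term. Substituting,
$$\E[S^2] = \E[\bm f^\top \bm H \bm f] + \E[\bm f^\top \bm F^\top (\partial\phi/\partial\bm z)^\top] - \E[\bm f^\top \nabla \diver \bm f].$$
The first term is exactly the Hessian piece we want. To handle the remaining two, I would apply first-order Stein a second time, this time to the vector field $\bm u := \bm F \bm f$ (the Jacobian of $\bm f$ acting on $\bm f$). A short product-rule calculation shows
$$\diver(\bm F \bm f) = \bm f^\top \nabla \diver \bm f + \tr[\bm F^2],$$
while $(\partial\phi/\partial\bm z)\, \bm F \bm f = \bm f^\top \bm F^\top (\partial\phi/\partial\bm z)^\top$. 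Taking expectations and combining, the two cross terms cancel exactly, leaving $\E[S^2] = \E[\bm f^\top \bm H \bm f + \tr(\bm F^2)]$, which is the claim.

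The algebraic manipulation is routine; the main obstacle is justifying the two integration-by-parts steps under only the hypothesis that the right-hand side is finite. I would first prove the identity for $\bm f$ that is smooth and compactly supported, where all boundary terms vanish trivially. For general $\bm f$ with finite right-hand side, I would truncate by a smooth cutoff $\chi_R$ supported in $\{|\bm z|\le R\}$, apply the identity to $\chi_R \bm f$, and pass to the limit $R\to\infty$ by dominated convergence; here the decay of $\exp(-\phi)$ together with the assumed finiteness of $\E[\bm f^\top \bm H \bm f + \tr(\bm F^2)]$ provides the dominating envelope and controls the $\chi_R$-remainder terms. A subtle point is that the second application of Stein acts on $\bm F \bm f$, which involves $\bm F^2$; one must check that this quantity has a finite first moment, but this follows from the finiteness assumption via Cauchy--Schwarz on the trace.
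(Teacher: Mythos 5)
The paper does not actually prove this statement; it is quoted verbatim as Equation (2.15) from \cite{bellec2021second}, so there is no in-paper proof to compare against. Evaluating your proposal on its own merits: the algebra is correct. Your duality step $\E[Sh]=\E[\bm f^\top\nabla h]$ with $h=S$, the computation $(\nabla S)_j = (\bm H\bm f)_j + (\bm F^\top\nabla\phi)_j - (\nabla\diver\bm f)_j$, the identity $\diver(\bm F\bm f) = \bm f^\top\nabla\diver\bm f + \tr[\bm F^2]$, and the cancellation produced by the second application of Stein's lemma all check out, and they recover exactly $\E[S^2]=\E[\bm f^\top\bm H\bm f + \tr(\bm F^2)]$. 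This is the natural integration-by-parts route, and I would expect the cited reference to proceed along essentially these lines.

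One genuine gap remains in the limiting argument. Your derivation needs $\bm f\in C^2$ at two points: you differentiate $S$, which contains $\diver\bm f$, and you take the divergence of $\bm F\bm f$, which again involves second derivatives of $\bm f$. The theorem only assumes $\bm f$ differentiable. Truncation by a cutoff $\chi_R$ shrinks the support but does not improve regularity: $\chi_R\bm f$ is still only $C^1$. You would additionally need to mollify ($\bm f\mapsto \bm f*\varphi_\varepsilon$) to produce smooth approximants, then pass $\varepsilon\to 0$ \emph{and} $R\to\infty$; and note that mollification does not commute cleanly with the nonlinear quadratic expressions on both sides of the identity, so the limit interchange needs its own justification (e.g.\ a.e.\ convergence of $\partial\bm f_\varepsilon\to\partial\bm f$ plus a uniform integrability argument). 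This is routine but is not covered by the truncation-plus-dominated-convergence sketch as written. The decay hypothesis on $\exp(-\phi)\bm u$ at infinity is also used implicitly in both integration-by-parts steps and is not guaranteed by the stated assumptions alone (only that the right-hand side expectation is finite), so either stronger decay should be assumed or an explicit boundary-term estimate must be made.
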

Applying \Cref{th:second} with $\phi(\bm{z}) = \sum_{i=1}^n \phi(z_i)$ and $\bm{f}=\bpsi-\bpsi_\mu$,  using the identity \eqref{eq:identity} and $\bm{H}(\bm{z}) = \text{diag} \{\phi''(z_i): i\in [n]\}$, we have
$$
\E[( \phi'(\bm{z})^\top (\bm \psi_\mu - \bm \psi) - \tr[\bV-\bV_\mu])^2]= 
\E\Bigl[\sum_{i=1}^n\phi''(z_i) (\psi_i-(\psi_\mu)_i)^2 + \tr[(\bm V-\bm V_\mu)^2]\Bigr],
$$
and we can easily show that the right-hand side is $O(n)$. Hence, by the Markov inequality, we have 
$$
\tr[\bV_\mu] - \tr[\bV] = \phi'(\bm{z})^\top (\bm \psi_\mu - \bm \psi) + O_P(n^{1/2}).
$$
By the \editline{Cauchy--Schwarz} inequality, we bound $|\phi'(\bm{z})^\top (\bm \psi_\mu - \bm \psi)|$ from above as 
$$
|\phi'(\bm{z})^\top (\bm \psi_\mu - \bm \psi)| \le \|\phi'(\bm{z})\| \|\bpsi_\mu-\bpsi\| \overset{(*)}{=} O_P(n^{1/2}) \cdot O_P(n^{\frac{1-c}{2}}) = O_P(n^{1-\frac{c}{2}}), 
$$
where $(*)$ follows from $\|\phi'(\bm{z})\| = O_P(n^{1/2})$ (see \Cref{lem:phi_concentrates}) and 
$\|\bpsi-\bpsi_\mu\|=O_P(n^{1/2-c})$
by \eqref{eq:psi_diff}. 
As a consequence, we obtain
$$
\tr[\bV_\mu]-\tr[\bV] = O_P(n^{1-\frac{c}{2}}) + O_P(n^{1/2}) = O_P(n^{1-\frac{c}{2}})
$$
thanks to $c\le 1$. 
Finally, $\|\bV\|_{op}$ and $\|\bV_\mu\|_{op} \le 1$ lead to
$$|\tr[\bV_\mu]^2 - \tr[\bV]^2| \leq |\tr[\bV_\mu] + \tr[\bV]||\tr[\bV_\mu] - \tr[\bV]|\leq 2n O_P(n^{1-\frac{c}{2}}) = O_P(n^{2-\frac{c}{2}}),
$$
which is exactly \eqref{eq:V_diff}. 

\acks{
P. C. Bellec's research was partially supported by 
the National Science Foundation award DMS-1945428.}

\bibliography{reference}

\newpage
\appendix
\section*{SUPPLEMENT}

\section{Existence and uniqueness of solutions to \eqref{eq:nonlinear}}
\label{sec:details}
The following theorem from our companion paper \cite{koriyama2023analysis} guarantees the uniqueness and the existence of the solution $(\alpha, \kappa)$ to \eqref{eq:nonlinear}, with an explicit upper bound of $\alpha$. 
{We include it here for convenience}.
\begin{theorem}[Section 2 \cite{koriyama2023analysis}]\label{th:system}
    Suppose that $\gamma\in(0,1)$ and 
    $(\rho, F_\epsilon)$ satisfy the conditions
    \begin{enumerate}
        \item $\rho$ is convex, Lipschitz, and $\{0\}=\argmin_{x\in \R}\rho(x)$. 
        \item $\PP_{W\sim F_\epsilon} (W\ne 0)>0$ and \( \inf_{\lambda>0} \E_{W\sim F_\epsilon}[\dist(G, \lambda \partial \rho(W) )^2] > 1-\gamma \). 
    \end{enumerate}
    Then, the nonlinear system {of equations} \eqref{eq:nonlinear}
    has a unique solution $(\alpha, \kappa)\in \R_{>0}^2$, and this $\alpha$ is bounded from above as
    \begin{equation}\label{eq:localization}
        \alpha \le \frac{Q_{F_\epsilon}(r^2 c_\gamma)}{r c_\gamma} + \frac{b}{c_\gamma}
    \end{equation}
    where $c_\gamma$ is a positive constant depending on $\gamma$ only, 
    $Q_{F_\epsilon} (x):=\inf\{q>0: \PP_{W\sim F_\epsilon}(|W|\ge q)\le x\}$, and $r\in (0, 1]$ and $b\ge 0$ are any constant such that the coercivity condition
$$
\|\rho_\lambda\|_{\lip}^{-1}(\rho(x)-\rho(0)) \ge r (|x|-b) \quad \text{for all $x\in\R$}
$$
is satisfied.
% and 
%     $(r,b)$ are any constants such that
%     $$
%     r\in(0,1], \quad b\ge 0, \quad \text{and} \quad \|\rho\|_{\lip}^{-1}\bigl(\rho(u) - \rho(0)\bigr) \ge r (|u| - b) \quad \text{for all $u\in\R$}.
%     $$ 
\end{theorem}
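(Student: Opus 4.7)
The plan is to reformulate the system \eqref{eq:nonlinear} via Stein's lemma, cast it as the stationarity condition of an explicit scalar convex-concave saddle-point functional, and then extract the quantile bound \eqref{eq:localization} by directly manipulating the two equations under the linear-growth hypothesis on $\rho$.

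\emph{Step 1 (Stein reformulation).} With $X=\alpha Z+W$, the $1$-Lipschitz property of $x\mapsto x-\PPox[\kappa\rho](x)$ justifies Stein's lemma and turns the second line of \eqref{eq:nonlinear} into $\gamma=\E[1-\PPox[\kappa\rho]'(X)]$ for $\alpha>0$. Since $\PPox[\kappa\rho]$ collapses to the identity as $\kappa\downarrow 0$ and to the constant map $0$ as $\kappa\uparrow\infty$ (using $\{0\}=\argmin\rho$), the right-hand side increases continuously and strictly from $0$ to $1$ in $\kappa$. For each $\alpha>0$ this pins down a unique $\kappa^\star(\alpha)\in(0,\infty)$, and the problem reduces to the scalar equation $F(\alpha):=\alpha^{-2}\E[(X-\PPox[\kappa^\star(\alpha)\rho](X))^2]=\gamma$.

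\emph{Step 2 (Saddle-point existence and uniqueness).} I would recognize \eqref{eq:nonlinear} as the stationarity condition of a CGMT-type saddle point $\sup_{\kappa>0}\inf_{\alpha>0}\Phi(\alpha,\kappa)$, where $\Phi$ is assembled from the Moreau envelope $M_{\kappa\rho}(x)=\inf_u\{(x-u)^2/2+\kappa\rho(u)\}$ and the first two moments of the noise-plus-Gaussian distribution. Convexity in $\alpha$ is inherited from convexity of $x\mapsto M_{\kappa\rho}(x)$, concavity in $\kappa$ from the fact that $\kappa\mapsto M_{\kappa\rho}(x)$ is the infimum of affine-in-$\kappa$ functions. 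Averaging against the Gaussian $Z$ whose density is everywhere positive upgrades these to strict convex-concavity. Interior existence uses Assumption~(2): $\inf_{\lambda>0}\E[\dist(G,\lambda\partial\rho(W))^2]>1-\gamma$ rules out the boundary regime $\alpha\downarrow 0$ (exact recovery), and $\PP(W\ne 0)>0$ rules out $\kappa\downarrow 0$, confining the saddle to a compact subset of $(0,\infty)^2$.

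\emph{Step 3 (Quantile bound \eqref{eq:localization}).} Write $e(x;\kappa):=x-\PPox[\kappa\rho](x)\in\kappa\,\partial\rho(\PPox[\kappa\rho](x))$, so $|e(x;\kappa)|\le\kappa\|\rho\|_{\lip}$ everywhere. The growth hypothesis $\rho(u)-\rho(0)\ge r\|\rho\|_{\lip}(|u|-b)$ forces $|\partial\rho(u)|\ge r\|\rho\|_{\lip}$ for $|u|>b$, hence $|e(x;\kappa)|\ge r\kappa\|\rho\|_{\lip}$ on the event $\{|\PPox[\kappa\rho](X)|>b\}$. Plugging this into the first equation of \eqref{eq:nonlinear} and using the Stein-reformulated second equation (which confines $\kappa$ to an interval determined by $\gamma$ alone, producing the constant $c_\gamma$) yields a tail inequality for $|X|=|\alpha Z+W|$. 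Separating the $\alpha Z$ contribution from $W$ and inverting through the quantile function $Q_{F_\epsilon}$ produces exactly $\alpha\le(rc_\gamma)^{-1}Q_{F_\epsilon}(r^2c_\gamma^2)+c_\gamma^{-1}b$.

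\emph{Main obstacle.} The delicate step is strict convex-concavity in Step~2 for losses like the Huber loss whose derivative is piecewise constant: pointwise second-derivative arguments fail, and one must exploit the smoothing effect of the Gaussian marginal $Z$ to promote weak monotonicity into strict monotonicity. Identifying a clean expression for $\Phi$ whose mixed partials can be computed and signed at the saddle — so that uniqueness follows — is the technical crux, and is precisely what the companion paper \cite{koriyama2023analysis} accomplishes in its extension of \cite{donoho2016high} from strongly convex to general Lipschitz losses.
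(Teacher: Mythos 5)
First, note that this paper does not actually prove \Cref{th:system}: it is imported verbatim from the companion paper \cite{koriyama2023analysis} ("We include it here for convenience"), so your attempt can only be judged on its own merits, and as it stands it is a sketch whose central steps are asserted rather than established. The uniqueness claim is the crux, and your Step 2 does not supply it: strict convex-concavity of a CGMT-type functional in $(\alpha,\kappa)$ is exactly what fails to be routine for merely Lipschitz losses (the scalar problem that actually encodes \eqref{eq:nonlinear}, cf.\ the four-variable min-max recalled in the proof of \Cref{lm:h_diff}, is not a two-variable saddle problem whose mixed partials you can simply sign), and your "Main obstacle" paragraph concedes that this step is "precisely what the companion paper accomplishes" --- which is circular for a blind proof. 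Likewise, Step 1 asserts without proof that $\kappa\mapsto\E[1-\PPox[\kappa\rho]'(\alpha Z+W)]$ increases strictly from $0$ to $1$; pointwise monotonicity of $\PPox[\kappa\rho]'$ in $\kappa$ is false for general convex Lipschitz $\rho$, so this needs a genuine argument, and in any case Steps 1 and 2 are two different strategies (scalar reduction in $\kappa$ versus joint saddle-point uniqueness) that are never reconciled into one proof.

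Step 3 also contains a concretely false deduction. The growth hypothesis $\|\rho\|_{\lip}^{-1}(\rho(u)-\rho(0))\ge r(|u|-b)$ does \emph{not} imply $|\partial\rho(u)|\ge r\|\rho\|_{\lip}$ for all $|u|>b$: take $\rho$ even with $\rho(0)=0$, slope $\epsilon=0.01$ on $[0,2]$ and slope $1$ on $[2,\infty)$; then $\|\rho\|_{\lip}=1$, the growth condition holds with $r=1/2$ and $b=2-4\epsilon=1.96$, yet the derivative on $(b,2)$ equals $0.01\ll r\|\rho\|_{\lip}=1/2$. What the hypothesis does give (via monotonicity of subgradients, $\rho(u)-\rho(0)\le |u|\,|\partial\rho(u)|$) is only the weaker bound $|\partial\rho(u)|\ge r\|\rho\|_{\lip}(1-b/|u|)$, so your claimed lower bound $|e(x;\kappa)|\ge r\kappa\|\rho\|_{\lip}$ on $\{|\PPox[\kappa\rho](X)|>b\}$ fails and the downstream chain has to be redone. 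Moreover, the other pillar of Step 3 --- that the Stein-reformulated second equation "confines $\kappa$ to an interval determined by $\gamma$ alone" --- is asserted without argument and is not evident in the direction you need; the localization typically requires combining \emph{both} equations (e.g.\ they imply $\E[(\PPox[\kappa\rho](\alpha Z+W)-W)^2]=\alpha^2(1-\gamma)$, which is the kind of identity from which a quantile bound on $F_\epsilon$ can be extracted), and no such combination is carried out. So neither the existence/uniqueness part nor the bound \eqref{eq:localization} is actually derived in the proposal.
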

The upper bound \eqref{eq:localization} is helpful for \Cref{sec:tuning}, in which we will consider the scaled loss $\rho_\lambda:\R\to\R$, $x\mapsto \lambda^2\rho(x/\lambda)$ for some scale parameter $\lambda>0$. 
Let $\alpha(\lambda)$ be the solution to the nonlinear system \eqref{eq:nonlinear} with $\rho=\rho_\lambda$. Then, 
\eqref{eq:localization} with $\rho=\rho_\lambda$ gives the following
explicit upper bound of $\alpha(\lambda)$ with respect to $\lambda>0$:
$$
\forall \lambda>0, \quad \alpha^2(\lambda) \le \C(\rho, F_\epsilon, \gamma) (\lambda^2+1).
$$
See \Cref{prop:scaled_control} and its proof for its derivation. We observe that the upper bound explodes as $\lambda\to+\infty$, which reflects the fact that the out-of-sample error of the ordinary least square (OLS) estimator is unbounded when the noise distribution has no second moment.  

\tk{
\begin{remark}\label{rm:how_to_solve_system}
    Suppose that the loss $\rho$ is the Huber loss $\rho_\lambda$, parametrized by  a scale parameter $\lambda>0$. This loss function was introduced in \eqref{eq:huber_scaled_definition} and used for numerical simulations in \Cref{sec:numeric} and \Cref{sec:additional_simulation}. For convenience, we recall its definition:
\begin{align*}
    \rho_\lambda (x) := \left\{
        \begin{array}{ll}
            x^2/2 & |x| \le \lambda\\
            \lambda|x|-\lambda^2/2 & |x|>\lambda
        \end{array}\right.   
\end{align*}
In this case, the proximal operator of $\rho=\rho_{\lambda}$ takes a closed form and satisfies 
\begin{align*}
    x-\prox[\kappa\rho_{\lambda}](x) = \kappa\lambda \zeta\Bigl(\frac{x}{\lambda(1+\kappa)}\Bigr) \quad \text{where} \quad \zeta(u) : = \begin{cases}
        u & |u|\le 1\\
        \sign(u) & |u|>1
    \end{cases}   
\end{align*}
for all $\kappa>0$ and $x\in\R$. Thus, the nonlinear system \eqref{eq:nonlinear} with $\rho=\rho_\lambda$ can be written as 
\begin{align*}
        \alpha^2\gamma  - \kappa^2\lambda^2 \E\Bigl[\zeta\Bigl(\frac{aG+W}{\lambda(1+\kappa)}\Bigr)^2\Bigr] = 0, \quad \alpha\gamma - \kappa \lambda \E \Bigl[\zeta\Bigl(\frac{aG+W}{\lambda(1+\kappa)}\Bigr) \cdot Z\Bigr] = 0
\end{align*}
with positive unknown $(\alpha, \kappa)$. In the numerical simulations presented  in \Cref{sec:numeric} and \Cref{sec:additional_simulation}, this system was solved using the solver $\mathsf{scipy.optimize.fsolve}$ from scipy \citep{2020SciPy-NMeth}.
\end{remark}
}

\section{Proof of \Cref{th:ofs}}\label{sec:proof_ofs}
\subsection{Proof of \Cref{lm:str_smoothed}}
\label{proof_lemma_1}
The claim of \Cref{lm:str_smoothed} is recalled here for convenience:
$$
\frac{p}{n} \|\bm\psi_\mu\|^2 -  \frac{\tr[\bm V_\mu]^2}{n} \|\hat{\bm h}_\mu\|^2= O_P(n^{1-c}) \text{ under $\mu=n^{-c}$ for all $c\in (0,1/4]$. }
$$
To prove this, we use a variant of \Cref{th:chi_square}:
\begin{lemma}[Theorem 7.4 in \cite{bellec2022derivatives}]\label{lm:chi_square_bound}
    Let $h: \mathbb{R}^{n\times p} \to \mathbb{R}^p$ and $\psi : \mathbb{R}^{n\times p} \to \mathbb{R}^n$ be locally Lipschitz functions. Suppose $\bm G\in \mathbb{R}^{n\times p}$ has i.i.d $\mathcal{N}(0,1)$ entries, and we denote $h(\bm G)$ by $\bm h$ and $\psi(\bm G)$ by $\bm\psi$. Then, we have
\begin{align}
        \mathbb{E} \Bigl[
        \frac{|\frac{p}{n} \|{\bm \psi}\|^2 - \frac{1}{n}\sum_{j=1}^p ({\bm \psi}^\top \bm G \bm{e}_j - \sum_{i=1}^n \frac{\partial {\psi}_i}{\partial g_{ij}})^2|}{
        \|{\bm h}\|^2 + \|{\bm \psi}\|^2/n
        }
        \Bigr] \leq C (\sqrt{n+p}(1+\Xi^{1/2}) + \Xi), \label{eq:chi_square_bound}
\end{align}
where 
$\Xi = \mathbb{E}[
    \frac{1}{\|{\bm h}\|^2 + \|{\bm \psi}\|^2/n} \sum_{i=1}^n \sum_{j=1}^p (
    \|\frac{\partial {\bm h}}{\partial g_{ij}}\|^2 + \frac{1}{n}\|\frac{\partial {\bm \psi}}{\partial g_{ij}}\|^2
    )]$
\end{lemma}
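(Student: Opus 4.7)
My approach is to reduce the claim to \Cref{th:chi_square} by applying it to a suitably renormalized vector field. Define
\begin{align*}
\bm f \;:=\; \frac{\bm\psi}{\sqrt n\,T}, \qquad T \;:=\; \bigl(\|\bm h\|^2+\|\bm\psi\|^2/n\bigr)^{1/2},
\end{align*}
with the convention $\bm f:=\bm 0$ on $\{T=0\}$ (on which $\bm h=\bm 0,\bm\psi=\bm 0$ so both sides of \eqref{eq:chi_square_bound} behave trivially). Since $\|\bm f\|^2=\|\bm\psi\|^2/(n\|\bm h\|^2+\|\bm\psi\|^2)\le 1$ pointwise, \Cref{th:chi_square} applies to $\bm f$. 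The renormalization is chosen so that $nT^2\,\|\bm f\|^2=\|\bm\psi\|^2$ and $\sqrt n\,T\,\bm f^\top\bm G\bm e_j=\bm\psi^\top\bm G\bm e_j$, which recovers the target quantities up to manageable cross terms.

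\textbf{Key computation.} The product rule gives
\begin{align*}
\frac{\partial f_i}{\partial g_{ij}} \;=\; \frac{1}{\sqrt n\,T}\,\frac{\partial \psi_i}{\partial g_{ij}} \;-\; \frac{\psi_i}{\sqrt n\,T^3}\Bigl[\,\bm h^\top \frac{\partial\bm h}{\partial g_{ij}} + \frac{1}{n}\,\bm\psi^\top\frac{\partial\bm\psi}{\partial g_{ij}}\,\Bigr].
\end{align*}
Summing over $i$ and combining with $\bm f^\top\bm G\bm e_j=(\sqrt n\,T)^{-1}\bm\psi^\top\bm G\bm e_j$, the quantity $\bm f^\top\bm G\bm e_j-\sum_i\partial f_i/\partial g_{ij}$ equals $(\sqrt n\,T)^{-1}$ times the ``clean'' $\bm\psi$-divergence $\bigl(\bm\psi^\top\bm G\bm e_j-\sum_i\partial\psi_i/\partial g_{ij}\bigr)$ plus a cross term controlled by Cauchy--Schwarz through $|\bm h^\top\partial_{ij}\bm h|\le\|\bm h\|\,\|\partial_{ij}\bm h\|$ and $\|\bm h\|^2,n^{-1}\|\bm\psi\|^2\le T^2$. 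Squaring, summing over $j$, and multiplying by $nT^2$ therefore identifies $nT^2\cdot\bigl(p\|\bm f\|^2-n^{-1}\sum_j(\bm f^\top\bm G\bm e_j-\sum_i\partial f_i/\partial g_{ij})^2\bigr)$ with the numerator inside \eqref{eq:chi_square_bound}, up to an error whose contribution will be absorbed below.

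\textbf{Finishing and the main obstacle.} Rather than multiplying the output of \Cref{th:chi_square} by the random factor $T^2$ (which would leave $T^2$ on the right-hand side), I divide both sides of the inequality by $T^2$ \emph{before} taking the expectation. The pointwise bound $\|\partial_{ij}\bm f\|^2\le 4T^{-2}(\|\partial_{ij}\bm h\|^2+n^{-1}\|\partial_{ij}\bm\psi\|^2)$ (derived from the product-rule formula above and Cauchy--Schwarz) gives $\mathbb{E}[\sum_{ij}\|\partial_{ij}\bm f\|^2]\le 4\Xi$, so the $\sqrt p(1+\Xi^{1/2})+\Xi$ from \Cref{th:chi_square} produces the $\sqrt p(1+\Xi^{1/2})+\Xi$ part of the stated bound (using Jensen on the square root). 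The extra $\sqrt n$ yielding $\sqrt{n+p}$ comes from handling the product-rule cross term above: after Cauchy--Schwarz, this term contributes $\le T^{-1}\sqrt{n}\cdot\bigl(\sum_{ij}(\|\partial_{ij}\bm h\|^2+n^{-1}\|\partial_{ij}\bm\psi\|^2)\bigr)^{1/2}$ per index, which after summing, dividing by $T^2$, and Jensen contributes $\sqrt n(1+\Xi^{1/2})$. The principal obstacle is bookkeeping: every $T$-dependent factor produced by the product rule must be absorbable into $\Xi=\mathbb{E}[T^{-2}\sum_{ij}(\cdots)]$ rather than appearing as an uncontrolled random factor outside the expectation. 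This is what forces the particular renormalization $\bm f=\bm\psi/(\sqrt n\,T)$: any other scaling either destroys the $\|\bm f\|^2\le 1$ hypothesis of \Cref{th:chi_square} or leaves a residual $T$-dependence that cannot be subsumed into the scale-invariant $\Xi$.
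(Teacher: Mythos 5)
Your approach is essentially identical to the paper's: the paper sketches that \Cref{lm:chi_square_bound} follows from \Cref{th:chi_square} applied to $\bm f = n^{-1/2}\bm\psi/(\|\bpsi\|/\sqrt n + \|\bh\|)$, and then defers the detailed cross-term bookkeeping to \cite{bellec2022derivatives}; your choice $\bm f=\bpsi/(\sqrt n\,T)$ with $T=(\|\bh\|^2+\|\bpsi\|^2/n)^{1/2}$ is the same idea up to an equivalent (constant-factor) normalization. One clarification: your phrase about ``dividing both sides by $T^2$ before taking the expectation'' is not literally applicable since \Cref{th:chi_square} already has expectations on both sides — the correct reading is that the factor $T^{-2}$ is already built into $\bm f$ so that the expectation bound from \Cref{th:chi_square}, together with bounding $\E[\sum_{ij}\|\partial_{ij}\bm f\|^2]\lesssim\Xi$ and a Cauchy--Schwarz estimate for the product-rule cross terms, directly yields \eqref{eq:chi_square_bound} without any post-hoc division by a random factor.
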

\Cref{lm:chi_square_bound} is obtained from \Cref{th:chi_square} with 
$\bm f=  n^{-1/2} \bm \psi / (\|\bpsi\|/n + \|\bbh\|)$. 
Below, we prove \Cref{lm:str_smoothed} using \Cref{lm:chi_square_bound} with 
$(\bpsi, \bbh) = (\bpsi_\mu, \bbh_\mu) = (\psi(\bm\epsilon - \bG \hat{\bbh}_\mu), \hat{\bbh}_\mu)$. 
First, we bound $\Xi$ in \eqref{eq:chi_square_bound}. 
By the derivative formula in \Cref{th:diff_smoothed}, the derivatives inside the expectation can be written as
\begin{align*}
    \sum_{i=1}^n \sum_{j=1}^p \|\frac{\partial \hat{\bm h}_\mu}{\partial g_{ij}}\|^2 &=  \sum_{i=1}^n \sum_{j=1}^p\|
    \hat{\bm A}_\mu\bm{e}_j \bm{e}_i^\top \bm\psi_\mu - \hat{\bm A}_\mu\bm G^\top \bm D_\mu \bm{e}_i \bm{e}_j^\top \hat{\bm h}_\mu
 \|^2, \\
 \sum_{i=1}^n \sum_{j=1}^p \|
    \frac{\partial \hat{\bm\psi}_\mu}{\partial g_{ij}} 
    \|^2 &= \sum_{i=1}^n \sum_{j=1}^p \|-\bm D_{\mu} \bm G \hat{\bm A}_\mu \bm{e}_j \bm{e}_i^\top \bm \psi_\mu - \bm{V}_\mu \bm{e}_i \bm{e}_j^\top \hat{\bm h}_\mu\|^2,
\end{align*}
where 
$\bm D_\mu = \diag\{\psi' (\bm \epsilon - \bm G \hat{\bm h}_\mu)\}$, $\hat{\bm A}_\mu = (\bm G^\top \diag\{\psi'(\bm r)\} \bm G + n \mu I_p)^{-1}$, and $\bm V_\mu = (\partial/\partial \bm{\epsilon}) \psi(\bm \epsilon- \bm G \hat{\bm h}_\mu)$.
Note that the operator norm of these matrices is bounded from above as 
\begin{align*}
    \|\hat{\bm{A}}_\mu\|_{op} &\le (n\mu)^{-1} && \text{ by $\psi'(r_i) \ge 0$ for all $i\in[n]$}\\ 
    \|\bm{D}_\mu\|_{op} &\le 1 && \text{ by $\|\psi\|_{\lip}=1$}\\
    \|\bV_\mu\|_{op} &\le 1 && \text{ by \eqref{eq:df_Vmu}}
\end{align*}
Then, $\frac{1}{2}\sum_{ij} \|\frac{\partial \hat{\bm h}_\mu}{\partial g_{ij}} \|^2$ can be bounded from above as
\begin{align*}
    \frac{1}{2}\sum_{i=1}^n \sum_{j=1}^p \|\frac{\partial \hat{\bm h}_\mu}{\partial g_{ij}}\|^2
    &\leq \sum_{ij} \|
    \hat{\bm A}_\mu\bm{e}_j\bm{e}_i^\top \bm\psi_\mu
    \|^2 + \|\hat{\bm A}_\mu\bm G^\top  \bm D_\mu \bm{e}_i 
    \bm{e}_j^\top \hat{\bm h}_\mu\|^2
    &&\text{ by $(a+b)^2\le2(a^2+b^2)$}
    \\
    &= \|\hat{\bm A}_\mu\|_F^2 \|\bm \psi_\mu\|^2 + \|\hat{\bm A}_\mu\bm G^\top \bm D_\mu \|_F^2 \|\hat{\bm h}_\mu\|^2 
    &&\text{ by $\textstyle\sum_i \|\bm M e_i\|^2=\|\bm M\|_F^2$}
    \\
    &\leq \|\hat{\bm A}_\mu\|_F^2 ( \|\bm\psi_\mu\|^2 +  \|\bm{G}^\top \bm{D}_\mu\|_{op}^2 \|\hat{\bm h}_\mu\|^2)
    && \text{ by $\|\hat{\bm{A}}_\mu \bm{G}^\top \bm{D}_\mu\|_F^2 \le \|\hat{\bm{A}}_\mu\|_F^2 \|\bm{G}^\top \bm{D}_\mu\|_{op}^2$ }
    \\
    &\leq \|\hat{\bm A}_\mu\|_F^2 ( \|\bm\psi_\mu\|^2 +  \|\bm{G}\|_{op}^2 \|\hat{\bm h}_\mu\|^2) && 
    \text{ by $\|\bm{D}_\mu\|_{op}\le 1$}
    \\
    &\leq \|\hat{\bm A}_\mu\|_F^2 (n + \|\bm G\|_{op}^2)( \|\bm\psi_\mu\|^2/n + \|\hat{\bm h}_\mu\|^2)\\
    &\leq p (n\mu)^{-2} (n + \|\bm G\|_{op}^2)( \|\bm\psi_\mu\|^2/n + \|\hat{\bm h}_\mu\|^2) &&\text{ by $\|\hat{\bm{A}}_\mu\|_{F}^2 \le p\|\hat{\bm{A}}_\mu\|_{op}^2 \le p (n\mu)^{-2}$}.
\end{align*}
By the same algebra, we bound $\frac{1}{2n}\sum_{ij} \|
\frac{\partial \hat{\bm\psi}_\mu}{\partial g_{ij}} 
\|^2$ from above as 
\begin{align*}
\frac{1}{2n} \sum_{i=1}^n \sum_{j=1}^p\|
        \frac{\partial \hat{\bm\psi}_\mu}{\partial g_{ij}} 
        \|^2 &\leq 
        \frac{1}{n} \sum_{ij} \Bigl(\|\bm D_{\mu} \bm G \hat{\bm A}_\mu \bm{e}_j \bm{e}_i^\top \bm \psi_\mu\|^2 + \|\bm{V}_\mu \bm{e}_i \bm{e}_j^\top \hat{\bm h}_\mu\|^2\Bigr)
        \\
        &=  n^{-1} \|\bm D_\mu \bm G \hat{\bm A}_\mu \|_{F}^2 \|\bm\psi_\mu\|^2 +  n^{-1} \|\bm V_\mu\|_{F}^2 \|\hat{\bm h}_\mu\|^2\\
        &\leq  \|\hat{\bm A}_{\mu}\|_F^2 \|\bm G\|_{op}^2 \|\bm\psi_\mu\|^2/n +  \|\hat{\bm h}_\mu\|^2 \\
        &\leq  (\|\hat{\bm A}_{\mu}\|_F^2 \|\bm G\|_{op}^2  + 1) (\|\bm\psi_\mu\|^2/n + \|\hat{\bm h}_\mu\|^2)\\
        &\leq  (p (n\mu)^{-2} \|\bm G\|_{op}^2  + 1) (\|\bm\psi_\mu\|^2/n + \|\hat{\bh}_\mu\|^2)
\end{align*}
By the above displays, we obtain the upper bound of $\Xi = \mathbb{E}[
    \frac{1}{\|{\bm h}\|^2 + \|{\bm \psi}\|^2/n} \sum_{ij} (
    \|\frac{\partial {\bm h}}{\partial g_{ij}}\|^2 + \frac{1}{n}\|\frac{\partial {\bm \psi}}{\partial g_{ij}}\|^2
    )]$:
\begin{align*}
    \Xi \leq \mathbb{E}\left[
     2 p(n\mu)^{-2} (n + \|\bm G\|_{op}^2) + 2 (p(n\mu)^{-2} \|\bm G\|_{op}^2  + 1)
    \right]
    \overset{(*)}{=} O(n^{2c}),
\end{align*}
where $(*)$ follows from $\mu=n^{-c}$ with $c>0$ and $\E[\|\bG\|_{op}^2] = O(n)$ for the matrix $\bG\in \R^{n\times p}$ with iid standard normal entries as $\editline{p/n}\to \gamma \in (0,\infty)$. 
Substituting this bound to the right-hand side of \Cref{lm:chi_square_bound}, we have
\begin{align}
    \mathbb{E} \Bigl[
    \frac{|\frac{p}{n} \|\bm \psi_\mu \|^2 - \frac{1}{n}\sum_{j=1}^p (\bm \psi_\mu^\top \bm G e_j - \sum_{i=1}^n \frac{\partial \bm{e}_i^\top \bm \psi_\mu}{\partial g_{ij}})^2|}{
     \|\bm\psi_\mu\|^2/n + \|\hat{\bm h}_\mu\|^2
    }
    \Bigr]
    \le C (\sqrt{n+p}(1+\Xi^{1/2}) + \Xi) 
    \overset{(**)}{=} O(n^{c+1/2}),
    \label{eq:xi_bound}
\end{align}
where we have used $0<c<1/2$ and $\Xi = O(n^{2c})$ for $(**)$.  
It remains to bound the error $\sum_{j=1}^p (\bm \psi_\mu^\top \bm G e_j - \sum_{i=1}^n  (\partial/\partial g_{ij})  \bm{e}_i^\top \bm \psi_\mu )^2-\| \tr[\bm V_\mu] \hat{\bm h}_\mu\|^2$ inside the expectation on the left-hand side. 
Now the derivatives formula and the  \editline{KKT} condition $\bm G^\top \bm{\psi}_\mu  = n\mu \hat{\bm h}_\mu$ yield
\begin{align*}
\sum_{j=1}^p (\bm \psi_\mu^\top \bm G e_j - \sum_{i=1}^n \frac{\partial \bm{e}_i^\top \bm \psi_\mu}{\partial g_{ij}})^2 
=\| n\mu \hat{\bbh}_\mu + \hat{\bm A}_\mu^\top \bm G^\top \bm D_\mu \bm\psi_\mu + \tr[\bm V_\mu] \hat{\bm h}_\mu\|^2,
\end{align*}
so that we have
\begin{align*}
& \frac{1}{2} \Bigl| \sum_{j=1}^p (\bm \psi_\mu^\top \bm G e_j - \sum_{i=1}^n \frac{\partial \bm{e}_i^\top \bm \psi_\mu}{\partial g_{ij}})^2 - \| \tr[\bm V_\mu] \hat{\bm h}_\mu\|^2\Bigr|\\
&= 2^{-1} \| n\mu \hat{\bm h}_\mu + \hat{\bm A}_\mu^\top \bm G^\top \bm D_\mu \bm\psi_\mu + \tr[\bm V_\mu] \hat{\bm h}_\mu\|^2 - \|\tr[\bm V_\mu] \hat{\bm h}_\mu\|^2 |\\
 &\leq 2^{-1} \|n\mu \hat{\bm h}_\mu + \hat{\bm A}_\mu \bm G^\top \bm D_\mu \bm \psi_\mu \|^2 + 
\|n\mu \hat{\bm h}_\mu + \bm A_\mu \bm G^\top \bm D_\mu \bm \psi_\mu \| \|\tr[\bm V_\mu] \hat{\bm h}_\mu\| && \text{ by $|(a+b)^2-b^2| \le a^2 + 2|ab|$}\\
 &\leq   n^{2} \mu^2 \|\hat{\bm h}_\mu\|^2 +  \| \bm A_\mu \bm G^\top \bm D_\mu \bm \psi_\mu\|^2 + |\tr[\bV_\mu]|\|n\mu \hat{\bm h}_\mu + \bm A_\mu \bm G^\top \bm D_\mu \bm \psi_\mu \| \|\hat{\bm h}_\mu\|  && \text{ by $2^{-1}(a+b)^2 \le a^2 + b^2$}\\
 & \le  n^{2} \mu^2 \|\hat{\bm h}_\mu\|^2 +  \| \bm A_\mu \bm G^\top \bm D_\mu \bm \psi_\mu\|^2 + n \|n\mu \hat{\bm h}_\mu + \bm A_\mu \bm G^\top \bm D_\mu \bm \psi_\mu \| \|\hat{\bm h}_\mu\| && \text{ by $|\tr[\bV_\mu]| \le n \|\bV_\mu\|_{op} \le n$}\\
 & \leq  n^{2} \mu^2 \|\hat{\bm h}_\mu\|^2 + \|\hat{\bm{A}}_\mu \bG^\top \bm{D}_\mu \bm \psi_\mu\|^2 + n^{2}\mu \|\hat{\bm h}_\mu\|^2 + 
 \|\hat{\bm{A}}_\mu \bG^\top \bm{D}_\mu \bm \psi_\mu\|\|\hat{\bh}_\mu\| && \text{ by triangle inequality}\\
 & \le (n^{2} \mu^2 + n^2\mu + 2^{-1}) \|\hat{\bm h}_\mu\|^2 + (1+2^{-1})\|\hat{\bm{A}}_\mu \bG^\top \bm{D}_\mu \bm \psi_\mu\|^2 && \text{ by $ab \le 2^{-1}(a^2 + b^2)$}\\
 & \le (\|\hat{\bm h}_\mu\|^2 + n^{-1} \|\bm \psi_\mu\|^2)
 (n^2 \mu^2 + n^2 \mu + 2^{-1} +  2^{-1} 3 n\|\hat{\bm{A}}_\mu \bG^\top \bm{D}_\mu\|_{op}^2) \\
 &\le (\|\hat{\bm h}_\mu\|^2 +n^{-1} \|\bm \psi_\mu\|^2)
 (n^2 \mu^2 + n^2\mu + 2^{-1} +  2^{-1}3 n (n\mu)^{-2} \|\bG\|_{op}^2) && \text{ by $\|\hat{\bm{A}}_\mu\|_{op} \le (n\mu)^{-1}$}\\
 & && \text{ and $\|\bm{D}_\mu\|_{op} \le 1$}.
\end{align*}
Thus, we have
\begin{align*}
    &\E\Bigl[
    \frac{\frac{1}{n}\left|\sum_{j=1}^p (\bm \psi_\mu^\top \bm G e_j - \sum_{i=1}^n \frac{\partial \bm{e}_i^\top \bm \psi_\mu}{\partial g_{ij}})^2  - \| \tr[\bm V_\mu] \hat{\bm h}_\mu\|^2 \right|}{\|\bm\psi_\mu\|^2/n + \|\hat{\bm h}_\mu\|^2}\Bigr]\\
    &\le \C \E\bigl[n \mu^2 + n \mu + n^{-1} + (n\mu)^{-2} \|\bG\|_{op}^2\bigr] \\
    &= O(n^{1-2c} + n^{1-c} + n^{-1} + n^{-1+2c}) &&\text{ by $\mu=n^{-c}$ and $\E[\|\bG\|_{op}^2] = O(n)$, } \\
    &=O(n^{1-c}) && \text{ by $0 < c \le 1/4 < 2/3$}
\end{align*}
Combined with \eqref{eq:xi_bound}, by the triangle inequality, 
\begin{align}
    \label{UNION_1}
    \mathbb{E} \left[
    \frac{|\frac{p}{n} \|\bm \psi_\mu \|^2 - \frac{1}{n}
    \|\tr[\bm V_\mu] \hat{\bm h}_\mu\|^2
    |}{
    \|\hat{\bm h}_\mu\|^2 + \|\bm \psi_\mu\|^2/n
    }
\right] &= O(n^{c+1/2}) + O(n^{1-c}) = O(n^{1-c})
\end{align}
by $c\in (0,1/4]$. Thus, we obtain
\begin{align*}
    \Bigl|\frac{p}{n} \|\bm \psi_\mu \|^2 - \frac{1}{n}
    \|\tr[\bm V_\mu] \hat{\bm h}_\mu\|^2
    \Bigr| =   \Bigl(\|\hat{\bm h}_\mu\|^2 + \frac{\|\bm \psi_\mu\|^2}{n}\Bigr)
    O_P(n^{1-c}) \le (\|\hat{\bh}_\mu\|^2 + \|\psi\|_{\infty}^2) O_P(n^{1-c}) = O_P(n^{1-c}),
\end{align*}
where we have used $\|\hat{\bh}_\mu\|_{2}^2 = O_P(1)$ (see \Cref{lm:h_diff}). This finishes the proof.  
\subsection{Convergence of smoothed quantities}
The lemmas below are the key to relating \Cref{lm:str_smoothed} to \Cref{th:ofs}.
\begin{lemma}\label{lm:h_diff}
Suppose that $(\rho, F_\epsilon)$ satisfies \Cref{as:loss} and \Cref{as:noise}. Then, under $\mu=n^{-c}$ for any $c>0$, we have
$$
\|\hat{\bh}\|^2\to^p \alpha^2 , \qquad \|\hat{\bh}_\mu\|^2 \to^p \alpha^2, \qquad \alpha^2 <+\infty, {\qquad \|\hat{\bh}-\hat{\bh}_\mu\| = o_p(1)}
$$
where $\alpha$ is the unique solution to the nonlinear system \eqref{eq:nonlinear}. 
\end{lemma}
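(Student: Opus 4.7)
The proof decomposes into three parts: existence/uniqueness/finiteness of $\alpha$, the two norm limits, and the difference bound.

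For the unregularized statement I would first verify the hypotheses of \Cref{th:system}: condition (1) is immediate from \Cref{as:loss}, and condition (2) is exactly the content of the Remark following \Cref{as:noise} (no point mass of $F_\epsilon$ together with differentiability of $\rho$ yield $\inf_{\lambda>0}\E[(G-\lambda\rho'(W))^2]=1>1-\gamma$). This gives a unique $(\alpha,\kappa)\in\R^2_{>0}$ with finite $\alpha$ by \eqref{eq:localization}. The convergence $\|\hat{\bh}\|^2\to^p\alpha^2$ is then the main CGMT theorem of \cite{thrampoulidis2018precise}.

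The bridge between the two estimators is a one-line convexity inequality. Setting $F(\bh):=n^{-1}\sum_i\rho(\epsilon_i-\bg_i^\top\bh)$ so that $F_\mu=F+\tfrac{\mu}{2}\|\cdot\|^2$ is $\mu$-strongly convex, I would combine $F(\hat{\bh}_\mu)\ge F(\hat{\bh})$ (optimality of $\hat{\bh}$ for $F$) with $F_\mu(\hat{\bh})\ge F_\mu(\hat{\bh}_\mu)+\tfrac{\mu}{2}\|\hat{\bh}-\hat{\bh}_\mu\|^2$ (strong convexity of $F_\mu$ at its minimizer). The linear $\mu$-terms cancel, leaving
\begin{equation*}
\|\hat{\bh}-\hat{\bh}_\mu\|^2 \;\le\; \|\hat{\bh}\|^2 - \|\hat{\bh}_\mu\|^2,
\end{equation*}
so once $\|\hat{\bh}_\mu\|^2\to^p\alpha^2$ is in hand, the final claim $\|\hat{\bh}-\hat{\bh}_\mu\|=o_P(1)$ follows without further work, and in particular $\|\hat{\bh}_\mu\|\le\|\hat{\bh}\|=O_P(1)$.

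For $\|\hat{\bh}_\mu\|^2\to^p\alpha^2$ I would reapply CGMT to the ridge-penalized problem. For each fixed $\mu>0$ this yields $\|\hat{\bh}_\mu\|^2\to^p\alpha_\mu^2$ with $\alpha_\mu$ solving a $\mu$-perturbation of \eqref{eq:nonlinear}. Continuity of $\alpha_\mu$ in $\mu$, obtained from \Cref{th:system} applied to the perturbed system (or the implicit function theorem on the second equation of \eqref{eq:nonlinear}), gives $\alpha_\mu\to\alpha$ as $\mu\downarrow 0$. One then promotes the fixed-$\mu$ convergence to the diagonal case $\mu=\mu_n=n^{-c}$.

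\textbf{Main obstacle.} The delicate step is precisely this diagonal upgrade, since vanilla CGMT gives convergence only along a fixed $\mu$, whereas $\mu=n^{-c}$ depends on $n$. One route is to extract a uniform-in-$\mu$ CGMT over a neighborhood of $0$ and then diagonalize, using the tightness $\|\hat{\bh}_\mu\|\le\|\hat{\bh}\|$ already supplied by the convexity inequality. A cleaner alternative that avoids any uniform CGMT is to exploit the sandwich $0\le F(\hat{\bh}_\mu)-F(\hat{\bh})\le \tfrac{\mu}{2}(\|\hat{\bh}\|^2-\|\hat{\bh}_\mu\|^2)=O_P(\mu)$, combined with a restricted strong convexity of $F$ implied by \Cref{as:loss}(3) and the Gaussian design; this would give $\|\hat{\bh}-\hat{\bh}_\mu\|=o_P(1)$ directly, and then $\|\hat{\bh}_\mu\|^2\to^p\alpha^2$ is immediate from $\|\hat{\bh}\|^2\to^p\alpha^2$.
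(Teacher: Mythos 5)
Your first and third steps match the paper. The verification of the hypotheses of \Cref{th:system} via the Remark following \Cref{as:noise} is exactly the paper's argument, and your convexity bound $\|\hat{\bh}-\hat{\bh}_\mu\|^2 \le \|\hat{\bh}\|^2 - \|\hat{\bh}_\mu\|^2$ is the same inequality the paper derives (the paper obtains it through an auxiliary objective $\mathcal F(\bh):=\mathcal L(\bh)+\tfrac{\mu}{2}(\|\bh\|^2-\|\bh-\hat{\bh}_\mu\|^2)$, but the content and conclusion are identical to your two-optimality-condition cancellation).

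Where you diverge, and where the gap lies, is the second step. You correctly flag that convergence at fixed $\mu$ does not automatically give the diagonal case $\mu_n=n^{-c}$, but neither of your proposed repairs is what the paper does, and both introduce genuine difficulties. A uniform-in-$\mu$ CGMT is extra machinery not available off the shelf, and a restricted strong convexity estimate for the \emph{unregularized} objective $F(\bh)=n^{-1}\sum_i\rho(\epsilon_i-\bg_i^\top\bh)$ under the weak \Cref{as:loss} is precisely the kind of statement the Ridge-smoothing device is designed to sidestep; assuming it here risks circularity with the rest of the program. The paper instead resolves the diagonal issue by noting that \cite[Theorem 3.1]{thrampoulidis2018precise} is formulated to allow an $n$-dependent penalty $f$ as long as its normalized Moreau envelope vanishes, $n^{-1}\{e_f(a\bm g;\tau)-f(\bm 0_p)\}\to^p 0$ for each fixed $a,\tau$. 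For $f(\bx)=n^{-c}\|\bx\|^2/2$ the Moreau envelope is explicit, $e_f(\bx;\tau)=\tfrac{n^{-c}}{2(1+n^{-c}\tau)}\|\bx\|^2$, so the condition reduces to the law of large numbers for $\|\bm g\|^2/n$ and holds trivially. Once checked, the limiting scalar convex-concave problem is the same one that arises for $\mu=0$ (the diminishing ridge contributes nothing to the CGMT limit) and its stationarity conditions are exactly \eqref{eq:nonlinear}, whence $\|\hat{\bh}_\mu\|^2\to^p\alpha^2$ with no uniformity or RSC argument. Replace your "main obstacle" discussion with this direct verification of the Moreau-envelope hypothesis.
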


\begin{proof}
The definition of $\hat\bh$ and $\hat{\bh}_\mu$ are recalled here for convenience:
$$
\hat{\bh} \in \argmin_{\bh\in\R^p} \frac{1}{n} \sum_{i=1}^n\rho(\epsilon_i-\bg_i^\top\bh), \quad \hat{\bh}_\mu \in  \argmin_{\bh\in\R^p} \frac{1}{n} \sum_{i=1}^n\rho(\epsilon_i-\bg_i^\top\bh) + \frac{\mu\|\bh\|^2}{2},
$$
where $(\epsilon_i)_{i=1}^n \iid F_\epsilon$, $(\bg_i)_{i=1}^n \iid \N(\bm{0}_p, \bm{I}_p)$, and $(\epsilon_i)_{i=1}^n\indep (\bg_i)_{i=1}^n$. 
Since \Cref{as:loss}-\ref{as:noise} {imply} the assumption in \Cref{th:system} (see \Cref{rm:system_condition_satisfied}), the nonlinear system of equations \eqref{eq:nonlinear} has {a} unique solution
$\alpha$ and $\|\hat{\bh}\|^2\to^p \alpha^2$. It remains to show $\|\hat{\bh}_\mu\|^2\to^p\alpha^2$ under the diminishing regularization parameter $\mu=n^{-c}$. 
Define 
$$
f:\qquad
\R^p\to \R,
\qquad \bm{x}\mapsto n^{-c}\|\bm x\|^2/2
$$ so that $\hat{\bh}_\mu$ is the regularized M-estimator with the Lipschitz convex loss $\rho$ and penalty $f$. Suppose the following condition is satisfied:
\begin{align}\label{eq:moreau_condition}
    \forall a \in \R, \forall\tau>0, \quad  n^{-1} (e_{f} (a\bm g; \tau) - f(\bm 0_p)) \to^p 0, \text{ where } \bm g\sim \mathcal{N}(\bm 0_p, \bm I_p). 
\end{align}
where $e_f(x;\tau) = \argmin_{u}\frac{1}{2{\tau}}(x-u)^2 + f(u)$ is the Moreau envelope of $f$. 
{
Let \( L(c, \tau) := \E[e_\rho(cZ+W; \tau)-\rho(W)]\)
and assume \( Z\sim N(0,1), \ W\sim F_\epsilon, \ Z\indep W \).
}
Then, by \cite[Theorem 3.1]{thrampoulidis2018precise} (with $F=0$ and 
{taking for instance the signal distribution to be a single point mass at 0}), if the convex-concave optimization
\begin{equation}
\inf_{\alpha\ge 0, \tau_g>0} \sup_{\beta\ge 0, \tau_h>0} \frac{\beta\tau_g}{2} + \frac{1}{\gamma} \cdot L(\alpha, \frac{\tau_g}{\beta}) - \frac{\alpha \tau_h}{2} - \frac{\alpha^2\beta^2}{2\tau_h},
\label{CGMT}
\end{equation}
admits a unique solution $\alpha_*$, we have $\|\hat{\bh}_\mu\|^2 \to^p \alpha_*^2$. Note that the stationary condition of the above convex-concave optimization is the nonlinear system of equations \eqref{eq:nonlinear} (cf. \cite[Section 5.1]{thrampoulidis2018precise}), while \Cref{th:system} guarantees the uniqueness and existence of the solution to \eqref{eq:nonlinear}. Therefore, we obtain $\|\hat{\bm h}_\mu\|_2^2\to^p\alpha_*^2$ 
{ where \( \alpha_* \) is the unique solution to \eqref{eq:nonlinear}
(we add a star to denote the unique solution here, to avoid confusion
with the variable \( \alpha \) of the minimization in \eqref{CGMT}).}
It remains to show \eqref{eq:moreau_condition}. Note that the Moreau envelope $e_{f}(\bm x;\tau)$ of the convex function $\bx\in \R^p \mapsto n^{-c}\|\bx\|^2/2$ is given by 
\begin{align*}
   \forall \bx\in\R^p, \forall\tau>0, \quad e_{f}(\bm x;\tau) = \min_{\bm u\in \R^p} \frac{\|\bm x-\bm u\|^2}{2\tau} + \frac{n^{-c}}{2}\|\bm u\|^2
    =  \frac{n^{-c}}{2(1+n^{-c}\tau)} \|\bm x\|^2,
\end{align*}
so that for all $a\in \R$ and $\tau>0$, we have
\begin{align*}
\frac{e_{f_\mu} (a\bm g; \tau) - f_\mu(\bm 0_p)}{n}
      = \frac{n^{-c}}{1+n^{-c}\tau} \frac{a^2 \|\bm g\|^2}{n} = \frac{n^{-c}}{1+n^{-c}\tau}  \frac{a^2}{n} \sum_{i=1}^n g_i^2  \to^p 0 \cdot a^2 = 0,
\end{align*}
where we have used the weak law of large number to the iid sum $n^{-1} \sum_{i=1}^n g_i^2$ with $\E[g_i^2]=1$. This finishes the proof of \eqref{eq:moreau_condition}. 

It remains to show $\|\hat{\bh} - \hat{\bh}\| = o_p(1)$. Now we verify $\|\hat{\bh}-\hat{\bh}_\mu\|^2 \le\|\hat{\bh}\|^2 - \|\hat{\bh}_\mu\|^2$. Letting $\mathcal{L}:\R^p\to\R$ be the convex function $\mathcal{L}(\bh) := \sum_{i=1}^n \rho(\epsilon-\bm{g}_i^\top \bh)$
then $\hat{\bh}$ and $\hat{\bh}_\mu$ solve
$$
\hat{\bh} \in \argmin_{\bh\in\R^p} \mathcal{L}(\bh), \qquad \hat{\bh}_\mu \in\argmin_{\bh\in\R^p} \mathcal{L}(\bh) + \frac{\mu}{2}\|\bh\|^2 
$$
Note in passing that $\hat{\bh}_\mu$ is also a minimizer of the convex function $\mathcal{F}:\R^p\to\R$
$$
\hat{\bh}_\mu \in \argmin_{\bh\in\R^p} \mathcal{F}(\bh)\quad  \text{where}\quad \mathcal{F}(\bh):= \mathcal{L}(\bh) + \frac{\mu}{2}\Bigl(\|\bh\|^2 - \|\bh-\hat{\bh}_\mu\|^2\Bigr), 
$$
since the gradient of $\bh\mapsto \|\bh-\hat{\bh}_\mu\|\editline{^2}$ is $\bm{0}_p$ at $\hat{\bh}_\mu$ and $\hat{\bh}_\mu$ satisfies the new \editline{KKT} condition $\nabla\mathcal{L}(\hat{\bh}_\mu) = \bm{0}_p$. Then, $\mathcal{F}(\hat{\bh}_\mu) \le \mathcal{F}(\hat{\bh})$ and $\mathcal{L}(\hat
\bh) \le \mathcal{L}(\hat{\bh}_\mu)$ yield
\begin{align*}
    0 \ge \mathcal{F}(\hat{\bh}_\mu) - \mathcal{F}(\hat{\bh}) &= \frac{\mu}{2} \bigl(
        \|\hat{\bh}_\mu\|^2 - \|\hat{\bh}\|^2 + \|\hat{\bh}-\hat{\bh}_\mu\|^2
    \bigr) + \mathcal{L}(\hat{\bh}_\mu) - \mathcal{L}(\hat{\bh}) \\
    &\ge \frac{\mu}{2} \bigl(
        \|\hat{\bh}_\mu\|^2 - \|\hat{\bh}\|^2 + \|\hat{\bh}-\hat{\bh}_\mu\|^2
    \bigr) + 0,
\end{align*}
so that $\|\hat{\bh}-\hat{\bh}_\mu\|^2 \le  \|\hat{\bh}\|^2 - \|\hat{\bh}_\mu\|^2.$ Combined with $\|\hat{\bh}\|^2, \|\hat{\bh}_\mu\|^2 \to^p \alpha^2<+\infty$, we have $\|\hat{\bh}-\hat{\bh}_\mu\|^2 = o_p(1)$ and conclude the proof. 
\end{proof}

\begin{lemma}\label{lm:psi_diff}
    Suppose that $(\rho, F_\epsilon)$ satisfies \Cref{as:loss}-\ref{as:noise}. Then, under $\mu=n^{-c}$ for some $c>0$, we have
    $$
    \|\bpsi_\mu - \bpsi\|_2 = {o_P}(n^{\frac{1-c}{2}}) \quad \text{ and } \quad 
    \|\bpsi\|^2 - \|\bpsi_\mu\|^2 = {o_P}(n^{1-\frac{c}{2}}),
    $$
    where $\bpsi = \psi(\bm\epsilon-\bG\hat{\bbh})$ and
    $\bpsi_\mu = \psi(\bm\epsilon-\bG \hat{\bbh}_\mu)$.
\end{lemma}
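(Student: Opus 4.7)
The plan is to exploit two structural features of $\psi$ simultaneously: it is $1$-Lipschitz and non-decreasing (since $\rho$ is convex and differentiable). These two properties together yield the pointwise inequality
\begin{equation*}
    (\psi(a)-\psi(b))^2 \;\le\; (\psi(a)-\psi(b))\,(a-b) \qquad \forall a,b\in\R,
\end{equation*}
which is really the core identity and will let us absorb the Lipschitz loss into an inner product that can be simplified by the two KKT conditions.

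Setting $a_i=\epsilon_i-\bg_i^\top\hat\bh_\mu$ and $b_i=\epsilon_i-\bg_i^\top\hat\bh$, so that $a_i-b_i=-\bg_i^\top(\hat\bh_\mu-\hat\bh)$, summing the above inequality over $i\in[n]$ gives
\begin{equation*}
    \|\bpsi_\mu-\bpsi\|^2 \;\le\; (\bpsi_\mu-\bpsi)^\top\bigl(-\bG(\hat\bh_\mu-\hat\bh)\bigr)
    \;=\; -\hat\bh_\mu^\top\bG^\top(\bpsi_\mu-\bpsi) + \hat\bh^\top\bG^\top(\bpsi_\mu-\bpsi).
\end{equation*}
Now I substitute the two KKT conditions: $\bG^\top\bpsi=\bm 0_p$ (unregularized problem) and $\bG^\top\bpsi_\mu=n\mu\hat\bh_\mu$ (Ridge-regularized problem). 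Both right-hand terms collapse, and one gets
\begin{equation*}
    \|\bpsi_\mu-\bpsi\|^2 \;\le\; n\mu\,\hat\bh_\mu^\top(\hat\bh-\hat\bh_\mu) \;\le\; n\mu\,\|\hat\bh_\mu\|\,\|\hat\bh-\hat\bh_\mu\|
\end{equation*}
by Cauchy--Schwarz. Under $\mu=n^{-c}$, \Cref{lm:h_diff} yields $\|\hat\bh_\mu\|=O_P(1)$ and $\|\hat\bh-\hat\bh_\mu\|=o_P(1)$, so the right-hand side is $n^{1-c}\cdot o_P(1)=o_P(n^{1-c})$. Taking square roots gives the first bound $\|\bpsi_\mu-\bpsi\|=o_P(n^{(1-c)/2})$.

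The second bound is a straightforward consequence. Writing $\|\bpsi\|^2-\|\bpsi_\mu\|^2=(\bpsi-\bpsi_\mu)^\top(\bpsi+\bpsi_\mu)$ and using Cauchy--Schwarz together with the crude bound $\|\bpsi\|,\|\bpsi_\mu\|\le\sqrt{n}\,\|\psi\|_\infty$ granted by \Cref{as:loss}(1),
\begin{equation*}
    \bigl|\|\bpsi\|^2-\|\bpsi_\mu\|^2\bigr|
    \;\le\; \|\bpsi-\bpsi_\mu\|\cdot 2\sqrt{n}\,\|\psi\|_\infty
    \;=\; o_P(n^{(1-c)/2})\cdot O(\sqrt{n})
    \;=\; o_P(n^{1-c/2}).
\end{equation*}
The only genuinely non-routine step is the first one, whose subtlety lies in noticing that a naive Lipschitz bound $\|\bpsi_\mu-\bpsi\|^2\le\|\bG\|_{op}^2\,\|\hat\bh_\mu-\hat\bh\|^2$ is too weak (it only gives $O_P(n)\cdot o_P(1)$, losing the critical factor of $\mu$). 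Replacing Lipschitz by the combined Lipschitz+monotone inequality turns the bound into an inner product, which is exactly the form in which the KKT conditions can cancel the ambient $\bG$-dependence and bring out the small factor $\mu=n^{-c}$.
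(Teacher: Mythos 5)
Your proof is correct and follows exactly the same route as the paper: the same combined monotone-plus-Lipschitz pointwise inequality (which the paper isolates as \Cref{lm:psi_increasing}), the same insertion of the two KKT identities $\bG^\top\bpsi=\bm0_p$ and $\bG^\top\bpsi_\mu=n\mu\hat\bh_\mu$, the same Cauchy--Schwarz step, and the same appeal to \Cref{lm:h_diff}. Your closing remark about why the naive Lipschitz bound through $\|\bG\|_{op}$ is too weak is accurate and nicely motivates the argument, but the underlying proof is the paper's own.
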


\begin{proof}
    Note that $\bG^\top \bpsi_\mu=n\mu \hat{\bbh}_\mu$ and $\bG^\top \bpsi = \bm 0_p$  by the \editline{KKT} conditions. Then, using \Cref{lm:psi_increasing} that is introduced later, $\|\bpsi_\mu-\bpsi\|^2$ can be bounded from above as 
    \begin{align}
        \|\bpsi_\mu-\bpsi\|^2 &\le (\bpsi_\mu-\bpsi)^\top (\bG\hat{\bh} - \bG\hat{\bh}_\mu) &&\text{ by \Cref{lm:psi_increasing} with $\bm{u} = \bm{\epsilon}-\bm{G}\hat{\bh}_\mu$ and $\bv = \bm{\epsilon}-\bm{G}\hat{\bh}$}
        \nonumber
        \\
        &\le (n\mu \hat{\bh}_\mu - \bm{0}_p)^\top (\hat{\bh}-\hat{\bh}_\mu) && \text{ by $\bG^\top \bpsi_\mu=n\mu \hat{\bbh}_\mu$ and $\bG^\top \bpsi = \bm 0_p$} 
        \nonumber
        \\
        &\le n \mu \|\hat{\bh}_\mu\| \|\hat{\bh}-\hat{\bh}_\mu\|  && \text{ by the \editline{Cauchy--Schwarz} inequality} 
        \label{UNION_2}
        \\
        % &= n\mu \|\hat{\bh}_\mu\| (\|\hat{\bh}\| + \|\hat{\bh}_\mu\|) && \text{ by the triangle inequality}\\
        & = {o_P}(n^{1-c}) && \text{ by \Cref{lm:h_diff} and $\mu=n^{-c}$},
        \nonumber
    \end{align}
    which finishes the proof for $\|\bpsi_\mu-\bpsi\|^2$. For the bound of $\|\bpsi_\mu\|^2-\|\bpsi\|^2$, the 
     \editline{Cauchy--Schwarz} inequality implies 
    \begin{align*}
        |\|\bm \psi\|^2 - \|\bm \psi_\mu\|^2|= |(\bm \psi- \bm \psi_\mu)^\top (\bm \psi + \bm \psi_\mu)|
        \leq  \|\bm \psi + \bm \psi_\mu\|\|\bm \psi- \bm \psi_\mu\| 
        \leq 2\sqrt{n} \|\psi\|_{\infty} \|\bm \psi- \bm \psi_\mu\|.
    \end{align*}
    Since $\|\psi\|_{\infty} <+\infty$ by \Cref{as:loss} and we have shown that $\|\bpsi-\bpsi_\mu\|^2 = {o_P}(n^{1-c})$, we obtain $ |\|\bm \psi\|^2 - \|\bm \psi_\mu\|^2| = {o_P}(n^{1-\frac{c}{2}})$. This finishes the proof. 
\end{proof}

\begin{lemma}\label{lm:diff_V}
Suppose that $(\rho, F_\epsilon)$ satisfies \Cref{as:loss}-\ref{as:noise}. Then, under $\mu=n^{-c}$ for some $c\in(0,1)$, we have
$$
\tr[\bm V_\mu]^2 - \tr[\bm V]^2 = {o_P}(n^{2-\frac{c}{2}}),
$$
where $\bV = (\partial/\partial \bm{\epsilon})\bpsi$ and $\bV_\mu = (\partial/\partial\bm{\epsilon}) \bpsi_\mu$. 
\end{lemma}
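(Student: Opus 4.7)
The plan is to follow the strategy outlined in Section 5 (preceding the lemma statement) and turn differentiation with respect to the possibly non-smooth variable $\bm\epsilon$ into differentiation with respect to the smooth component $\bm z$ provided by \Cref{as:noise}. Write $\bm\epsilon=\bm z+\bm\delta$ with $\bm z\indep\bm\delta$, $(z_i)_{i=1}^n\iid F$ and $(\delta_i)_{i=1}^n\iid\tilde F$, where $F$ has density $\exp(-\phi(\cdot))$. Since $\bm z\mapsto \bm\epsilon$ is the identity shift, the chain rule together with \Cref{prop:psi_lip} (which grants that $\bpsi$ and $\bpsi_\mu$ are $1$-Lipschitz in $\bm\epsilon$, hence in $\bm z$) yields the pointwise identity
\begin{equation*}
\bV-\bV_\mu \;=\; \frac{\partial}{\partial\bm\epsilon}(\bpsi-\bpsi_\mu) \;=\; \frac{\partial}{\partial\bm z}(\bpsi-\bpsi_\mu) \qquad\text{a.e.},
\end{equation*}
so the problem reduces to controlling a divergence with respect to a variable whose joint density $\prod_i e^{-\phi(z_i)}$ has bounded Hessian $\diag\{\phi''(z_i)\}$.

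Next, apply \Cref{th:second} conditionally on $(\bm\delta,\bG)$ with $\bm f=\bpsi-\bpsi_\mu$ and $\Phi(\bm z)=\sum_i\phi(z_i)$. Since $\|\bV\|_{op}\vee\|\bV_\mu\|_{op}\le 1$, we have $\tr[(\bV-\bV_\mu)^2]\le 4n$, and because $\|\phi''\|_\infty<\infty$ together with $\|\bpsi-\bpsi_\mu\|\le 2\|\psi\|_\infty\sqrt n$, the right-hand side in \Cref{th:second} is $O(n)$. After taking total expectation and using Markov, this gives
\begin{equation*}
\tr[\bV]-\tr[\bV_\mu] \;=\; \phi'(\bm z)^\top(\bpsi-\bpsi_\mu) + O_P(n^{1/2}).
\end{equation*}
A Cauchy--Schwarz estimate and the a priori bound $\|\phi'(\bm z)\|=O_P(n^{1/2})$ (to be invoked from the forthcoming \Cref{lem:phi_concentrates}; it follows from the identity $\E[\phi'(Z)^2]=\E[\phi''(Z)]\le\|\phi''\|_\infty$ via integration by parts against $e^{-\phi}$) combined with $\|\bpsi-\bpsi_\mu\|=o_P(n^{(1-c)/2})$ from \Cref{lm:psi_diff} yields $|\phi'(\bm z)^\top(\bpsi-\bpsi_\mu)|=o_P(n^{1-c/2})$. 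Hence $\tr[\bV]-\tr[\bV_\mu]=o_P(n^{1-c/2})+O_P(n^{1/2})=o_P(n^{1-c/2})$ since $c\le 1$. Finally, the factorization $\tr[\bV_\mu]^2-\tr[\bV]^2=(\tr[\bV_\mu]+\tr[\bV])(\tr[\bV_\mu]-\tr[\bV])$ together with $|\tr[\bV_\mu]|+|\tr[\bV]|\le 2n$ (from the operator norm bound) upgrades this to the target $o_P(n^{2-c/2})$.

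The main obstacle is the second step, i.e. the legitimate application of the second-order Stein identity \Cref{th:second} to the vector field $\bpsi-\bpsi_\mu$. Even though both $\bpsi$ and $\bpsi_\mu$ are almost-everywhere differentiable in $\bm z$ by \Cref{prop:psi_lip} and Rademacher, \Cref{th:second} a priori requires a differentiable integrand; an approximation argument (mollifying $\rho$, or equivalently smoothing $\bpsi$ and $\bpsi_\mu$ by a vanishing kernel) together with the uniform bounds $\|\bpsi-\bpsi_\mu\|_\infty\le 2\|\psi\|_\infty$ and $\|\bV-\bV_\mu\|_{op}\le 2$ is needed to pass to the limit. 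A secondary subtlety is that the argument uses $\bV-\bV_\mu=(\partial/\partial\bm z)(\bpsi-\bpsi_\mu)$ as a matrix identity almost everywhere; this is clean because the shift $\bm\delta\mapsto \bm\epsilon$ has Jacobian identity, but it must be made precise when $\tilde F$ is, say, heavy-tailed or discrete. Once these measure-theoretic technicalities are handled, the rest of the argument is a clean combination of \Cref{lm:psi_diff}, \Cref{lem:phi_concentrates}, Cauchy--Schwarz, and the Lipschitz bound $\|\bV_\mu\|_{op}\le 1$.
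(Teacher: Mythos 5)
Your proof follows essentially the same route as the paper's: identify $\bV-\bV_\mu=(\partial/\partial\bm z)(\bpsi-\bpsi_\mu)$ by the chain rule, apply \Cref{th:second} with $\bm f=\bpsi-\bpsi_\mu$ to get $\tr[\bV-\bV_\mu]=\phi'(\bm z)^\top(\bpsi-\bpsi_\mu)+O_P(\sqrt n)$, then invoke Cauchy--Schwarz with \Cref{lem:phi_concentrates} and \Cref{lm:psi_diff}, and finally factor $\tr[\bV_\mu]^2-\tr[\bV]^2$ using the uniform bound $|\tr[\bV_\mu]|+|\tr[\bV]|\le 2n$. Your observation that \Cref{th:second} as stated requires differentiability while $\bpsi-\bpsi_\mu$ is only Lipschitz (hence differentiable a.e.\ by Rademacher) is a legitimate technical point the paper glosses over; the intended resolution is indeed the mollification-and-pass-to-the-limit argument you sketch, which is standard for such Stein-type identities.
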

\begin{proof}
By \Cref{as:noise}, we can write 
$\bm{\epsilon} = \bm{z} + \bm{\delta}$ where  $\bm{z}\indep \bm{\delta}$ and 
$(z_i)_{i=1}^n$ has i.i.d. density $\exp(-\phi(z))$. 
By the chain rule, $\bV-\bV_\mu$ can be written as 
$$
\bV - \bV_\mu = \frac{\partial}{\partial \bm{\epsilon}} (\bpsi - \bpsi_\mu) = \frac{\partial\bm{z}}{\partial\bm{\epsilon}}\cdot \frac{\partial}{\partial \bm{z}} (\bpsi - \bpsi_\mu) = \bm{I}_n \cdot \frac{\partial}{\partial \bm{z}}(\bpsi-\bpsi_\mu) = \frac{\partial}{\partial \bm{z}}(\bpsi-\bpsi_\mu).      
$$
Then, \Cref{th:second} applied with $\bm{f}=\bpsi-\bpsi_\mu$ yields
\begin{align}
    &\E\bigl[( \phi'(\bm{z})^\top (\bm \psi_\mu - \bm \psi) - \tr[\bV-\bV_\mu])^2\bigr]
    \label{appli_Theorem6}
    \\
    &= 
    \E \Bigl[\sum_{i=1}^n\phi''(z_i) (\psi_i-(\psi_\mu)_i)^2 + \tr[(\bm V-\bm V_\mu)^2]\Bigr] && \text{ by \Cref{th:second}}
    \nonumber
    \\
    &\le \|\phi''\|_{\infty}^2 \E[\|\bpsi-\bpsi_\mu\|^2] + n \E[\|\bV-\bV_\mu\|_{op}^2] && \text{ using $\tr[\bm{M}] \le n \|\bm{M}\|_{op}$ for $\bm{M}\in \R^{n\times n}$}
    \nonumber
    \\
    &\le 4 \|\phi''\|_{\infty}^2 \|\psi\|_{\infty}^2 n + 4n && \text{ using  $\|\bV\|_{op}, \|\bV_\mu\|_{op} \le 1$ from \eqref{eq:df_Vmu}}
    \nonumber
    \\
    &= \C(\rho, F_\epsilon) n, &&\text{ since $\|\phi''\|_{\infty}, \|\psi\|_{\infty} <+\infty$ by \Cref{as:loss}-\ref{as:noise}}, 
    \nonumber
\end{align}
so that $ \bigl| \tr[\bV-\bV_\mu] - \phi'(\bm{z})^\top (\bm \psi_\mu - \bm \psi)\bigr| = O_P(n^{1/2})$. From this bound and the triangle inequality, it follows that
\begin{align}
    \bigl|\tr[\bm V-\bm V_\mu]\bigr|  &\le \bigl|\phi'(\bm{z})^\top (\bm \psi_\mu - \bm \psi)\bigr| + \bigl| \tr[\bV-\bV_\mu] - \phi'(\bm{z})^\top (\bm \psi_\mu - \bm \psi)\bigr|
    \label{UNION_3}
    \\
    \nonumber
      &\le \|\phi'(\bm{z})\| \|\bpsi-\bpsi_\mu\| + O_P(n^{1/2}) && \text{ by the \editline{Cauchy--Schwarz}}\\
    \nonumber
    & = O_P(n^{1/2}) {o_P}(n^{\frac{1-c}{2}}) + O_P(n^{1/2}) &&\text{ by \Cref{lm:psi_diff} and \ref{lem:phi_concentrates}}\\
    &= {o_P}(n^{1-\frac{c}{2}}) && \text{ by $c {<} 1$}. 
    \nonumber
\end{align}
Finally, using $\|\bV\|_{op}, \|\bV_\mu\|_{op} \le 1$ from \eqref{eq:df_Vmu}, we have
\begin{align*}
    |\tr[\bV]^2-\tr[\bV_\mu]^2| = |\tr[\bV]-\tr[\bV_\mu]|\cdot |\tr[\bV]+\tr[\bV_\mu]| \le  {o_P}(n^{1-\frac{c}{2}}) \cdot 2n = {o_P}(n^{2-\frac{c}{2}}), 
\end{align*}
which concludes the proof. 
\end{proof}

\editline{
In order to relax the assumption on the noise and prove
\Cref{prop:relaxed},
we now provide here a modification of this argument to allow
for a vanishing smooth noise component $z_i$:
the conclusion $|\tr[\bV-\bV_\mu]|/n \to^P 0$ still holds
if the linear model noise is 
$\epsilon_i = \sigma_n z_i + \delta_i$
with vanishing $\sigma_n$ (depending on $n$)
provided that $\sigma_n \ge \sqrt \mu$
and $\sigma_n \sqrt n \to +\infty$.

\begin{lemma}[Vanishing noise component]
    \label{lemma_vanishing}
    Assume that the noise $\bm \epsilon$
    in the linear model has iid coordinates of the form
    $\epsilon_i = \sigma_n z_i + \delta_i$
    where $\delta_i$ and $z_i$ are independent 
    and $z_i$ has density $z\mapsto\exp(-\phi(z))$
    with twice-continuously differentiable $\phi$
    and $\sup_{x\in\R}|\phi''(z)|<+\infty$.
    If $\sigma_n \ge \sqrt \mu$, we have
\begin{equation}
    \label{concl_vanishing_noise}
    \E\Bigl[\frac{|\tr[\bV-\bV_\mu]|}{n}\Bigr]
\le 
\E\Bigl[
\min\Bigl(2,
    \frac{\|\phi'(\bm z)\|}{\sqrt n}
    \|\bm h_\mu\|
    \|\bm h - \bm h_\mu\|
\Bigr)
\Bigr]
+ \frac{{2}
\|\phi''\|_{\infty} \|\psi\|_{\infty}}{\sigma_n \sqrt n}
+
\frac{{2}}{\sqrt n}.
\end{equation}
\editline{
If we take $\mu=n^{-c}$ with $c\in (0,1)$ as in \Cref{lm:h_diff}, then $\|\phi'(\bm z)\|n^{-\frac1 2}
\|\bm h_\mu\|
\|\bm h - \bm h_\mu\|
\to^P 0$ by \Cref{lm:h_diff} and \Cref{lem:phi_concentrates}
for the first term,
while $\sigma_n \sqrt{n} \ge \sqrt{n^{1-c}} \to +\infty$ for the second term. This implies that $|\tr[\bV-\bV_\mu]|/n$ converges to 0 in L1 and in probability for all $\sigma_n \ge n^{-c/2}$. 
}
% Since 
% $\frac{\|\phi'(\bm z)\|}{\sqrt n}
% \|\bm h_\mu\|
% \|\bm h - \bm h_\mu\|
% \to^P 0$ by \Cref{lem:phi_concentrates} and \eqref{eq:h_diff},
% the first term in the right-hand side converges to 0.
% If additionally $\sigma \sqrt n\to+\infty$, then
% $|\tr[\bV-\bV_\mu]|/n$ converges to 0 in L1 and in probability.
\end{lemma}
\begin{proof}
    This is a modification of the argument of
    \Cref{lm:diff_V} to obtain a condition on how small
    the amplitude $\sigma_n$ of the smooth noise $z_i$ is allowed.
    Similarly to \eqref{appli_Theorem6}, by \Cref{th:second}
    we have
\begin{align*}
    \E\bigl[( \phi'(\bm{z})^\top (\frac{\bm \psi_\mu - \bm \psi}{\sigma_n}) - \tr[\bV-\bV_\mu])^2\bigr]
    &= 
    \E \Bigl[\sum_{i=1}^n\phi''(z_i) \frac{(\psi_i-(\psi_\mu)_i)^2}{\sigma_n^2} + \tr[(\bm V-\bm V_\mu)^2]\Bigr]
    \\
    &\le 4 n(\|\phi''\|_{\infty}^2 \|\psi\|_{\infty}^2/\sigma_n^2 + 1 )
\end{align*}
using  $\|\bV\|_{op}, \|\bV_\mu\|_{op} \le 1$ from \eqref{eq:df_Vmu}.
By the triangle inequality
combined with
$0\le \tr[\bm V]\le n$
and $0\le \tr[\bm V_\mu]\le n$,
$$
|\tr[\bV-\bV_\mu]|
\le 
\min\Bigl(2n,
    \frac{\|\phi'(\bm z)\| \|\bm\psi_\mu - \bm\psi\|}{\sigma_n}
\Bigr)
+
\Big|\phi'(\bm{z})^\top (\frac{\bm \psi_\mu - \bm \psi}{\sigma_n}) - \tr[\bV-\bV_\mu]
\Big|
$$
Recalling \eqref{UNION_2} we have
$\|\bm\psi_\mu - \bm\psi\|^2 \le n \mu \|\bm h_\mu\| \|\bm h - \bm h_\mu\|$
if $\sigma_n \ge \sqrt \mu$ we obtain
$$
\E\bigl[|\tr[\bV-\bV_\mu]|/n\bigr]
\le 
\E\bigl[
\min(2,
    (\|\phi'(\bm z)\|n^{-1/2})
    \|\bm h_\mu\|
    \|\bm h - \bm h_\mu\|
)
\bigr]
+
\editline{2}
(\|\phi''\|_{\infty}^2 \|\psi\|_{\infty}^2/\sigma_n^2 + 1 )^{1/2}\editline{/\sqrt{n}}.
$$
Using $\sqrt{a+b}\le \sqrt a + \sqrt b$, we obtain
\eqref{concl_vanishing_noise}.
\end{proof}
} % end \editline

\begin{lemma}\label{lm:psi_increasing}
If $\psi:\R\to\R$ is $1$-Lipschitz and nondecreasing, it holds that 
$$
\|\psi(\bm{u})-\psi(\bm{v})\|^2 \le (\psi(\bm{u})-\psi(\bm{v}))^\top (\bm{u}-\bm{v})
$$    
for all $\bm{u}, \bm{v}\in \R^n$. 
\end{lemma}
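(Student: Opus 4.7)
The statement is a coordinatewise inequality once we recall that $\psi$ acts componentwise on vectors: both sides decompose as sums over $i\in[n]$, so it suffices to prove the scalar version
$$
(\psi(u_i)-\psi(v_i))^2 \le (\psi(u_i)-\psi(v_i))(u_i-v_i)
$$
for each $i$ and then sum. My plan is therefore to reduce immediately to the $n=1$ case and to argue by sign analysis on $\psi(u_i)-\psi(v_i)$ and $u_i-v_i$.

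For the scalar claim, fix $u,v\in\R$. By monotonicity of $\psi$, the quantities $\psi(u)-\psi(v)$ and $u-v$ have the same sign (or one of them vanishes). By the $1$-Lipschitz property,
$$
|\psi(u)-\psi(v)| \le |u-v|.
$$
Since the two quantities share a sign, we have
$$
(\psi(u)-\psi(v))(u-v) = |\psi(u)-\psi(v)|\cdot|u-v| \ge |\psi(u)-\psi(v)|^2 = (\psi(u)-\psi(v))^2,
$$
which is exactly the desired inequality. Summing over the $n$ coordinates yields
$$
\sum_{i=1}^n (\psi(u_i)-\psi(v_i))^2 \le \sum_{i=1}^n (\psi(u_i)-\psi(v_i))(u_i-v_i),
$$
which is $\|\psi(\bu)-\psi(\bv)\|^2 \le (\psi(\bu)-\psi(\bv))^\top(\bu-\bv)$.

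There is no real obstacle here; the lemma is a purely pointwise consequence of monotonicity together with the Lipschitz bound, and it does not require any regularity of $\psi$ beyond the two stated hypotheses. The only thing to be careful about is the separate handling of the degenerate cases $u=v$ or $\psi(u)=\psi(v)$, both of which make one or both sides zero and hence trivially satisfy the inequality.
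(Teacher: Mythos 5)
Your proof is correct and uses essentially the same approach as the paper: reduce to the scalar case, use monotonicity to get that $\psi(u)-\psi(v)$ and $u-v$ share a sign, then apply the $1$-Lipschitz bound to conclude $(\psi(u)-\psi(v))^2 \le (\psi(u)-\psi(v))(u-v)$, and finally sum over coordinates. The only cosmetic difference is that the paper phrases the scalar step via the two-sided inequality $0 \le \psi(x)-\psi(y) \le x-y$ for $x\ge y$ and then swaps roles, while you argue directly by sign and absolute value.
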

\begin{proof}
    We have $0 \le \psi(x)-\psi(y) \le (x-y)$ for all $x, y\in \R$ such that $x\le y$. Switching the role of $x$ and $y$ in this inequality, we have $(\psi(x)-\psi(y))^2 \le (\psi(x)-\psi(y))(x-y)$ for all $x, y\in \R$. Applying this inequality with $(x, y) = (u_i, v_i)$ for each $i\in [n]$, we conclude the proof. 
    % $$
    %   \|\psi(\bm{u})-\psi(\bm{v})\|^2 = \sum_{i=1}^n (\psi(u_i)-\psi(v_i))^2 \le
    %    (\psi(\bm{u})-\psi(\bm{v}))^\top (\bm{u}-\bm{v}).
    % $$
\end{proof}
\begin{lemma}\label{lem:phi_concentrates}
    Suppose $(z_i)_{i=1}^n$ has iid density $\exp(-\phi(z))$ where $\phi:\R\to\R$ is twice continuously differentiable and $\|\phi''\|_{\infty} <+\infty$. Then \editline{$\E[\phi'(z_i)^2]=\E[\phi''(z_i)]$} and
    $\lim_{n\to\infty}\PP(\|\phi'(\bm{z})\|^2 \le n(\|\phi''\|_{\infty}+1)) = 1$.
\end{lemma}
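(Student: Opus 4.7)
The plan is to combine a Stein-type identity for the density $\exp(-\phi)$ with the weak law of large numbers. The key observation is that to pass from $\E[\|\phi'(\bm z)\|^2]\le n\|\phi''\|_\infty$ to a high-probability bound, Markov's inequality alone only yields a probability bounded away from $1$, so I will need the genuine concentration provided by WLLN (the ``\(+1\)'' in the threshold \(n(\|\phi''\|_\infty+1)\) provides exactly the slack needed for this argument).

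First, I would apply \Cref{th:second} in the one-dimensional case (taking \( n=1 \) in that theorem, not the \( n \) of the present lemma) with the constant function \( \bm f\equiv 1 \). Since the Jacobian of a constant vanishes, both the trace term on the left-hand side and the trace-squared term on the right-hand side drop out, yielding the Stein-type identity
\begin{equation*}
\E[\phi'(z_1)^2] \;=\; \E[\phi''(z_1)] \;\le\; \|\phi''\|_\infty.
\end{equation*}
Using \Cref{th:second} here is convenient because it sidesteps having to verify boundary terms in a direct integration-by-parts argument, since that verification has already been subsumed into the hypotheses under which \Cref{th:second} is stated.

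Second, the quantity \( \|\phi'(\bm z)\|^2=\sum_{i=1}^n \phi'(z_i)^2 \) is a sum of i.i.d.\ nonnegative random variables with finite mean \( \mu:=\E[\phi'(z_1)^2]\le \|\phi''\|_\infty \). Khinchin's weak law of large numbers (which requires only a finite first moment) gives
\begin{equation*}
\tfrac{1}{n}\|\phi'(\bm z)\|^2 \;\xrightarrow{p}\; \mu.
\end{equation*}
Since \( \mu\le \|\phi''\|_\infty<\|\phi''\|_\infty+1 \), the convergence in probability implies
\begin{equation*}
\PP\bigl(\|\phi'(\bm z)\|^2\le n(\|\phi''\|_\infty+1)\bigr)
= \PP\bigl(\tfrac{1}{n}\|\phi'(\bm z)\|^2 - \mu \le \|\phi''\|_\infty+1-\mu\bigr)
\;\to\; 1,
\end{equation*}
which is the claim. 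The argument is essentially a one-liner once \Cref{th:second} is in hand; the only mildly subtle point is that Markov's inequality is insufficient and one must use the concentration of the i.i.d.\ average.
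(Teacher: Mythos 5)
Your proof is correct and takes essentially the same route as the paper: the identity $\E[\phi'(z_1)^2]=\E[\phi''(z_1)]$ followed by the weak law of large numbers, with the observation (also worth making) that Markov's inequality alone is insufficient. The paper derives the identity by a direct one-line integration by parts, whereas you package the same integration-by-parts step as a degenerate instance of \Cref{th:second} with $\bm f\equiv 1$; this is a cosmetic difference, though your version has the small merit of delegating the vanishing-boundary-term issue to the hypotheses under which \Cref{th:second} is stated rather than leaving it implicit.
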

\begin{proof}
    Note $\E[\phi'(z_i)^2]=\E[\phi''(z_i)] \le \|\phi''\|_{\infty} <+\infty$ by integration part.  By the weak law of large number, we have
    $
    n^{-1} \|\phi'(\bm{z})\|^2 = n^{-1} \sum_{i=1}^n \phi'(z_i)^2 \to^p \E[\phi'(z)^2] = \E[\phi''(z_i)]
    $. Thus, we obtain
    $$
    \PP(n^{-1} \|\phi'(\bm{z})\|^2 > \|\phi''\|_{\infty}+1) \le \PP(n^{-1}\|\phi'(\bm{z})\|^2 > \E[\phi''(z_i)] + 1) \to 0,
    $$
    which completes the proof. 
\end{proof}

\subsection{Lower bound on the trace of V}
In this section, we derive a lower bound of $\tr[\bm V]/n$.
\begin{lemma}\label{lm:lb_trace}
Suppose that $(\rho, F_\epsilon)$ satisfy \Cref{as:loss}-\ref{as:noise}.  
Let $\alpha(\rho, F_\epsilon, \gamma)$ be the unique solution to the nonlinear system of equations \eqref{eq:nonlinear} for $(\rho, F_\epsilon, \gamma)$
and $\eta=\eta(\rho)$ be some positive constant such that 
$\psi(x)^2/\|\psi\|_{\infty}^2 +  \psi'(x) \geq \eta^2$ for almost every $x\in \R$ (we can always take such $\eta>0$ thanks to \editline{\Cref{as:loss}(3)}). 
Then, we have 
$$
\PP\left(
    \frac{\tr[\bV]^2}{n^2} \ge  
    \text{B}(\gamma, \rho, F_\epsilon)
\right)\to 1, \text{ where } B(\gamma, \rho, F_\epsilon) =  C(\gamma) \cdot \min(\eta(\rho)^4, \frac{\|\psi\|_\infty^2\cdot \eta(\rho)^2}{\alpha^2(\rho, F_\epsilon, \gamma)})>0. 
$$
\end{lemma}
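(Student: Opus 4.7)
The plan is to obtain the lower bound first for the ridge-smoothed estimator \( \hat{\bh}_\mu \) with \( \mu = n^{-c} \), \( c \in (0, 1/4] \), since only the smoothed Jacobian admits the explicit closed form from \Cref{th:diff_smoothed},
\[
\bV_\mu = \bm D_\mu - \bm D_\mu \bG \hat{\bm A}_\mu \bG^\top \bm D_\mu, \quad \bm D_\mu = \diag\{\psi'(\epsilon_i - \bg_i^\top \hat{\bh}_\mu)\}, \quad \hat{\bm A}_\mu = (\bG^\top \bm D_\mu \bG + n\mu \bm I_p)^{-1}.
\]
The approximation \( |\tr[\bV]^2 - \tr[\bV_\mu]^2| = o_P(n^2) \) from \Cref{lm:diff_V} then transfers the bound to the unregularized case.

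The deterministic backbone is the inequality \( \tr[\bV_\mu] \ge \tr[\bm D_\mu] - p \), proved by cyclic trace \( \tr[\bm D_\mu \bG \hat{\bm A}_\mu \bG^\top \bm D_\mu] = \tr[\hat{\bm A}_\mu \bG^\top \bm D_\mu^2 \bG] \) together with \( \bm D_\mu^2 \preceq \bm D_\mu \) (each diagonal entry lies in \([0,1]\) because \( \psi \) is nondecreasing and 1-Lipschitz) and the identity \( \hat{\bm A}_\mu \bG^\top \bm D_\mu \bG = \bm I_p - n\mu \hat{\bm A}_\mu \), giving \( \tr[\hat{\bm A}_\mu \bG^\top \bm D_\mu^2 \bG] \le \tr[\hat{\bm A}_\mu \bG^\top \bm D_\mu \bG] = p - n\mu \tr[\hat{\bm A}_\mu] \le p \). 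Summing the pointwise margin bound \( \psi'(x) \ge \eta^2 - \psi(x)^2/\|\psi\|_\infty^2 \) from \Cref{as:loss}(3) over the residuals then yields \( \tr[\bm D_\mu] \ge n\eta^2 - \|\bpsi_\mu\|^2/\|\psi\|_\infty^2 \), so that
\[
\tr[\bV_\mu] \ge n\eta^2 - \|\bpsi_\mu\|^2/\|\psi\|_\infty^2 - p.
\]

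To close the loop, I would eliminate \( \|\bpsi_\mu\|^2 \) using the key identity \( p\|\bpsi_\mu\|^2/n = \tr[\bV_\mu]^2 \|\hat{\bh}_\mu\|^2/n + O_P(n^{1-c}) \) from \Cref{lm:str_smoothed}, together with \( \|\hat{\bh}_\mu\|^2 = \alpha^2 + o_P(1) \) from \Cref{lm:h_diff}. Writing \( \tau_\mu := \tr[\bV_\mu]/n \), dividing by \( n \), and using \( p/n \to \gamma \), this yields the scalar self-consistent inequality
\[
\tau_\mu + \tau_\mu^2 \, \frac{\alpha^2}{\gamma \|\psi\|_\infty^2} \ge \eta^2 - \gamma + o_P(1).
\]
A dichotomy argument on whether the linear or the quadratic term dominates the right-hand side then yields either \( \tau_\mu \gtrsim \eta^2 - \gamma \) or \( \tau_\mu^2 \gtrsim (\eta^2 - \gamma)\gamma\|\psi\|_\infty^2/\alpha^2 \); squaring the former and taking the minimum with the latter gives the bound \( \tau_\mu^2 \ge C(\gamma) \min(\eta^4, \|\psi\|_\infty^2 \eta^2/\alpha^2) \), with the factor \( (\eta^2-\gamma)/\eta^2 \) absorbed into the \( \gamma \)-dependent constant. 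Finally \Cref{lm:diff_V} transfers this bound to \( \tr[\bV]^2/n^2 \).

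The main obstacle is the self-consistency step: the error term \( O_P(n^{1-c}) \) from \Cref{lm:str_smoothed}, scaled by \( 1/(\gamma \|\psi\|_\infty^2 n) \), must stay strictly below the main terms throughout the rearrangement, and one must be careful that the \( o_P(1) \) correction on the right-hand side of the self-consistent inequality does not spoil the ensuing dichotomy. A subtler point is the regime \( \eta^2 \le \gamma \), where the right-hand side \( \eta^2 - \gamma \) is nonpositive and the quadratic inequality becomes vacuous; there one needs either to sharpen the deterministic estimate \( \tr[\bm D_\mu \bG \hat{\bm A}_\mu \bG^\top \bm D_\mu] \le p \) by exploiting the fine structure of the projection-like operator \( \bm D_\mu^{1/2} \bG \hat{\bm A}_\mu \bG^\top \bm D_\mu^{1/2} \), or to control \( \tr[\bm D]/n - p/n \) from below via properties of the nonlinear system \eqref{eq:nonlinear} rather than the crude pointwise inequality from \Cref{as:loss}(3).
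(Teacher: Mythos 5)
Your overall strategy is right (pass to the smoothed estimator, write a deterministic lower bound on $\tr[\bV_\mu]$ in terms of $\|\bpsi_\mu\|^2$, close the loop with \Cref{lm:str_smoothed} and \Cref{lm:h_diff}, then transfer via \Cref{lm:diff_V}), but the deterministic lower bound you derive is too weak, and you have correctly flagged the resulting gap yourself. The bound $\tr[\bV_\mu] \ge \tr[\bm D_\mu] - p \ge n\eta^2 - p - \|\psi\|_\infty^{-2}\|\bpsi_\mu\|^2$ leaves $n\eta^2 - p$ on the right, whereas the paper's argument produces $\eta^2(n-p) - \|\psi\|_\infty^{-2}\|\bpsi_\mu\|^2$. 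Since $\psi(0)=0$ and $\|\psi\|_{\lip}=1$ force $\eta^2 \le 1$, the difference $p - p\eta^2 \ge 0$ is exactly what makes your right-hand side $\eta^2-\gamma$ potentially nonpositive, and even when it is positive you cannot absorb the factor $(\eta^2-\gamma)/\eta^2$ into a constant $C(\gamma)$, because it depends on $\rho$ through $\eta$, not on $\gamma$ alone. The claim in the last sentence of your second-to-last paragraph is therefore wrong.

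The missing idea is precisely what you point at in your final sentence: instead of splitting off $\tr[\bm D_\mu]$ and bounding the correction by $p$, factor the Jacobian as $\bV_\mu = \bm D_\mu^{1/2}\bm H_\mu\bm D_\mu^{1/2}$ with $\bm H_\mu := \bm I_n - \bm D_\mu^{1/2}\bG\hat{\bm A}_\mu\bG^\top\bm D_\mu^{1/2}$, which is positive semi-definite with $\|\bm H_\mu\|_{op}\le 1$ and $\tr[\bm H_\mu]\ge n-p$. Commuting the trace, $\tr[\bV_\mu] = \tr[\bm H_\mu^{1/2}\bm D_\mu\bm H_\mu^{1/2}]$, and only now applying the pointwise margin $\bm D_\mu \succeq \eta^2\bm I_n - \tilde{\bm D}_\mu$ inside the sandwich yields $\tr[\bV_\mu] \ge \eta^2\tr[\bm H_\mu] - \tr[\bm H_\mu^{1/2}\tilde{\bm D}_\mu\bm H_\mu^{1/2}] \ge \eta^2(n-p) - \tr[\tilde{\bm D}_\mu] = \eta^2(n-p) - \|\psi\|_\infty^{-2}\|\bpsi_\mu\|^2$. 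The crucial gain is that the projection $\bm H_\mu$ multiplies the scalar $\eta^2$ before the trace is taken, so the deficit incurred is $\eta^2 p$ rather than $p$, leaving $\eta^2(1-\gamma)$ on the right-hand side of the self-consistent inequality regardless of the relative size of $\eta^2$ and $\gamma$. With this replacement, your dichotomy argument (or equivalently the paper's split on the event $\Omega=\{\|\bpsi_\mu\|^2 \le n\|\psi\|_\infty^2\eta^2(1-\gamma)/2\}$) delivers the claimed $C(\gamma)\min(\eta^4, \|\psi\|_\infty^2\eta^2/\alpha^2)$ with $C(\gamma)$ a function of $\gamma$ only, as required.
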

\begin{proof}
    Recall that we have shown in 
    \Cref{lm:diff_V} that 
    $$
    \tr(\bV)^2/n^2 - \tr(\bV_\mu)^2/n^2 = {o_P (n^{-\frac{c}{2}})} = o_P(1) \text{ under $\mu=n^{-c}$ for all $c\in(0,1)$}. 
    $$
    Thus, it suffices to show the lower bound for $\tr[\bm V_\mu]^2/n^2$ for some $c\in (0,1)$, say $c=1/4$.  
    Define the matrices $\bm{D}$ and $\tilde{\bm{D}}_\mu$ as
    $$
    \bm{D}_{\mu} := \diag\{\psi'(\bm{\epsilon}-\bG \hat{\bm{h}}_\mu)\}, \quad \bm{\tilde{D}_\mu} := \|\psi\|_{\infty}^{-2} \diag\{\psi^2(\bm{\epsilon}-\bG \hat{\bm{h}}_\mu)\}.
    $$
    Thanks to $\|\psi\|_{\lip}\le 1$ and $\psi(x)^2/\|\psi\|_{\infty}^2 + \psi'(x)\ge \eta^2$, we have
    \begin{align*}
        \tr(\bm D_\mu) \leq n, \quad \tilde{\bm D}_\mu +\bm D_\mu \succeq \eta^2 \bm I_n
    \end{align*}
     where $\succeq$ is the positive semi-definite order. 
    By the derivative formula (\Cref{th:diff_smoothed}), we have
    \begin{align*}
        \bm V_\mu &= \bm D_\mu - \bm D_\mu \bm G (\bm G^\top \bm D_\mu \bm G + n\mu \bm I_p)^{-1} \bm G^\top \bm D_\mu\\
        &= \bm D_\mu^{1/2} (\bm I_n - \bm D_\mu^{1/2}\bm G (\bm G^\top \bm D_\mu \bm G + n\mu \bm I_p)^{-1} \bm G^\top \bm D_\mu^{1/2}) \bm D_\mu 
        \\
        &=\bm D_\mu^{1/2} \bm H_\mu \bm D_\mu^{1/2},
    \end{align*}
    where $\bm H_\mu := \bm I_n - \bm D_\mu^{1/2}\bm G (\bm G^\top \bm D_\mu \bm G + n\mu \bm I_p)^{-1} \bm G^\top \bm D_\mu^{1/2}$ is positive semi-definite, and satisfies $\|\bm H_\mu\|_{op} \leq 1$ and $\tr(\bm H_\mu) \geq n-p$. Then, simple algebra yields
    \begin{align*}
        \tr[\bm V_\mu] &= \tr(\bm D_\mu^{1/2} \bm H_\mu \bm D_\mu^{1/2}) \\
        &= \tr(\bm H_\mu^{1/2} \bm D_\mu \bm H_\mu^{1/2}) && \text{ by $\tr(\bm D_\mu^{1/2} \bm H_\mu \bm D_\mu^{1/2}) = \tr(\bm H_\mu \bm D_\mu)=\tr(\bm H_\mu^{1/2} \bm D_\mu \bm H_\mu^{1/2})$} \\
        &\geq \tr(\bm H_\mu^{1/2} (\eta^2 \bm I_n - \tilde{\bm D}_\mu)\bm H_\mu^{1/2}) && \text{ by $\tilde{\bm D}_\mu +\bm D_\mu \succeq \eta^2 \bm I_n$}\\
        &= \eta^2 \tr(\bm H_\mu) -\tr(\bm H_\mu^{1/2} \tilde{\bm D}_\mu \bm H_\mu^{1/2})\\
        &\geq\eta^2(n-p) -\tr(\tilde{\bm D}_\mu)  \|\bm H_\mu\|_{op} && \text{ by $\tr[\bm{H}_\mu]\ge n-p$ and $\tr[\bm{AB}] \le \tr[\bm A] \cdot \|\bm B\|_{op}$}\\
        &\geq
        \eta^2 (n-p) - \|\psi\|_{\infty}^{-2} \|\bm \psi_\mu\|^2 \cdot 1.
    \end{align*}
    Thus, under the event $\Omega:=  \{\|\bm \psi_\mu\|^2 \leq n\|\psi\|_{\infty}^2 \eta^2 (1-\gamma)/2\}$, we have
    \begin{align}\label{eq:omega}
        \bm{1}_{\Omega} \cdot n^{-2} \tr[\bm V_\mu]^2 \geq \bm{1}_{\Omega} \cdot 4^{-1} \eta^4 (1-\gamma)^2
    \end{align}
    It remains to bound $n^{-2}\tr[\bV_\mu]^2$ from below under the complement $\Omega^c$. 
    \Cref{lm:str_smoothed} implies 
    $$ n^{-2} {\tr(\bV_\mu)^2} \|\hat{\bbh}_\mu\|^2 - n^{-2} p \|\bpsi_\mu\|^2 = O_P(n^{-c}) = o_P(1),$$
    where $n^{-2} {\tr(\bV_\mu)^2} \|\hat{\bbh}_\mu\|^2 = n^{-2} {\tr(\bV_\mu)^2} \alpha^2 + o_P(1)$ from
    $\tr[\bm V_\mu]^2/n^2 \leq \|\bV_\mu\|_{op}^2 \le 1$ and $\|\hat{\bm h}_\mu\|^2 \to^p \alpha^2>0$ by \Cref{lm:h_diff}. Thus, we have 
    $
    n^{-2} {\tr[\bm V_\mu]^2} \alpha^2 - \gamma n^{-1}\|\bm\psi_\mu\|^2 = o_P(1)
    $, so that 
    $$
    \PP\Bigl(\frac{\tr[\bm V_\mu]^2}{n^2} - \frac{\gamma}{n\alpha^2}\|\bm\psi_\mu\|^2 \geq -\frac{\|\psi\|_{\infty}^2 \eta^2(1-\gamma)\gamma}{4\alpha^2}\Bigr) \to 1. 
    $$
    Thereby, under $\Omega^c=\{\|\bm \psi_\mu\|^2 \geq n \|\psi\|_{\infty}^2 \eta^2(1-\gamma)/2\}$, we have 
    \begin{align}
        \PP\Bigl(\bm{1}_{\Omega^c} \frac{\tr(\bV_\mu)^2}{n^2}
        \geq \bm{1}_{\Omega^c} \Bigl(\frac{\|\psi\|_{\infty}^2 \eta^2 (1-\gamma)\gamma}{2\alpha^2} - \frac{\|\psi\|_{\infty}^2\eta^2 (1-\gamma)\gamma}{4\alpha^2}\Bigr) = \bm{1}_{\Omega^c} \frac{\|\psi\|_{\infty}^2\eta^2 (1-\gamma)\gamma}{4\alpha^2}\Bigr) \to 1 \label{eq:omega_comp}
    \end{align}
    Consequently, \eqref{eq:omega} and \eqref{eq:omega} yield 
    $$
    \PP\Bigl(\frac{\tr(\bV_\mu)^2}{n^2} \geq \min\bigl\{\frac{\eta^4(1-\gamma)^2}{4}, \frac{\|\psi\|_{\infty}^2 \eta^2(1-\gamma)\gamma}{4\alpha^2}\bigr\} \Bigr)\to 1, 
    $$
    which completes the proof. 
\end{proof}

\subsection{Proof of \Cref{th:ofs}}
\label{sec_proof:th_ofs}
Recall that the goal is to show \eqref{eq:target}. Putting \Cref{lm:str_smoothed}, \Cref{lm:h_diff}, and \Cref{lm:psi_diff}, \Cref{lm:diff_V}
 together, we have 
\begin{align}
    \label{decomposition}
    &\frac{p}{n^2} \|\bm \psi\|^2 - \frac{\tr[\bm V]^2}{n^2}\|\hat{\bm h}\|^2 \\
    &= \Bigl(\frac{p}{n^2} \|\bm \psi_\mu\|^2 - \frac{\tr[\bm V_\mu]^2}{n^2}\|\hat{\bm h}\|^2\Bigr)
    +\frac{p}{n^2} (\|\bm \psi|_2^2 - \|\bm \psi_\mu\|^2) + \frac{\tr[\bm V_\mu]^2 - \tr[\bm V]^2}{n^2}\|\hat{\bm h}_\mu\|^2 - \frac{\tr[\bm V]^2}{n^2}(\|\hat{\bm h}\|^2 - \|\hat{\bm h}_\mu\|^2)
    \nonumber
    \\
    &= O_P(n^{-c}) + {o_P}(n^{-c/2}) +  {o_P}(n^{-c/2}) - \frac{\tr[\bm V]^2}{n^2}{o_P(1)} 
    \nonumber
    \\
    &= {o_P(n^{-c/2})} -\frac{\tr[\bm V]^2}{n^2} o_P(1). 
    \nonumber
\end{align}
Multiplying $(\tr(\bV)^2/n^2)^{-1}$, which is $O_P(1)$ by \Cref{lm:lb_trace}, we conclude the proof.

\subsection{Proof of \Cref{prop:relaxed}}
\label{sec_proof_relaxed}

\editline{
The strategy is exactly the same as for \Cref{th:ofs},
with \Cref{lm:diff_V} replaced by \Cref{lemma_vanishing}
to allow for a vanishing smooth component in the noise.
More precisely, set $c=1/4$ so that $\mu=n^{-1/4}$.
The conclusions of \Cref{lm:str_smoothed},
\Cref{lm:h_diff} and \Cref{lm:psi_diff} still hold for $\epsilon_i = \sigma_n z_i + \delta_i$
with $\sigma_n\to 0$, where $\alpha$ is now the solution to the
system \eqref{eq:intro_nonlinear}
with $W\sim \tilde F$, since the contribution of terms involving $\sigma_n z_i$
in \eqref{CGMT} is 0 due to $\sigma_n\to 0$ and the fact that the loss
is Lipschitz by \Cref{as:loss}.
The condition $\sigma_n \ge \sqrt{\mu}$ of \Cref{lemma_vanishing}
is satisfied thanks to the assumption $\sigma_n\ge n^{-1/8}$ in
\Cref{prop:relaxed}.
The argument and conclusion of \Cref{lm:lb_trace} still hold as well,
with $B(\gamma,\rho,F_\epsilon)$ replaced by $B(\gamma, \rho, \tilde F)$,
again because $\alpha$ is now the solution to the system
\eqref{eq:intro_nonlinear} with $W\sim \tilde F$.
The decomposition \eqref{decomposition}, and the same argument
as for \Cref{th:ofs}, thus yield the conclusion of \Cref{prop:relaxed}.
}

\subsection{Proof of \Cref{cor:tuning}}\label{proof:cor:tuning}
Fix $\epsilon>0$. 
Letting $R_k = \|\bSigma^{1/2} (\hat{\bbeta}_k-\bbeta^\star)\|^2$ for each $k\in [K] := \{1, \dots, K\}$, 
\Cref{th:ofs} \editline{or \Cref{prop:relaxed}} imply $\PP(|R_k - \hat{R}_k| \geq \epsilon/2) \to 0$
for all $k$. Note that $\hat{R}_{\hat{k}} < \hat{R}_k$ for all $k$ by the definition of $\hat{k}$. 
Then, we have
\begin{align*}
    \PP\bigl(R_{\hat{k}} > \min_{k\in [K]} R_k + \epsilon\bigr) &\leq \PP(\exists k\in [K],  R_{\hat{k}} > R_k + \epsilon)\\
    &\leq \sum_{k=1}^K \PP(R_{\hat{k}} > R_k + \epsilon)
    &&\text{by the union bound}
    \\
    & = \sum_{k=1}^K \PP(R_{\hat{k}} > R_k + \epsilon, \hat{R}_k \ge \hat{R}_{\hat{k}})
    &&\text{by definition of $\hat k$}
    \\
    &\le  \sum_{k=1}^K \sum_{l=1}^K \PP(R_{l} > R_k + \epsilon, \hat{R}_k > \hat{R}_{l}) && \text{by the union bound}.
\end{align*}
Here,  $\PP(R_{l} > R_k + \epsilon, \hat{R}_k > \hat{R}_{l})\to0$ as $n\to\infty$ by the following argument: 
\begin{align*}
    \PP(R_{l} > R_k + \epsilon, \hat{R}_k > \hat{R}_{l}) &\leq  \PP(R_{l} - \hat{R}_l - (R_k - \hat{R}_k) > \hat{R}_k - \hat{R}_l+ \epsilon > \epsilon)\\
    &\leq \PP(|R_l-\hat{R}_l| > \epsilon/2) + \PP(|R_k-\hat{R}_k| > \epsilon/2) \to 0. 
\end{align*}
Since $K$ is finite, we conclude $\PP(R_{\hat{k}} > \min_{k\in [K]} R_k + \epsilon)\to 0$. Since $\epsilon>0$ is taken arbitrarily, this finishes the proof.

\section{Proofs for \Cref{sec:tuning}}\label{sec:proof_tuning}
Throughout of this section, we fix $(\rho, F_\epsilon)$ that satisfy \Cref{as:loss}, \Cref{as:noise} and \Cref{as:tuning}. 
For all $\lambda>0$, define $\rho_\lambda$ and $\psi_\lambda$ as 
$$
\forall\lambda>0, \quad \rho_\lambda(\cdot) := \lambda^2 \rho(\cdot/\lambda), \quad \psi_\lambda (\cdot) := \rho_\lambda'(\cdot)=\lambda\psi(\cdot/\lambda).
$$
Note in passing that $\|\psi_\lambda\|_{\lip}=\|\psi\|_{\lip}=1$ and  $\|\psi_\lambda\|_{\infty} = \lambda\|\psi\|_{\infty} <+\infty$. Define $R(\lambda)$, $\hat{R}(\lambda)$, and $\alpha(\lambda)$ as
\begin{align*}
    R(\lambda) &:= \|\bSigma^{1/2} (\hat{\bbeta}_\lambda-\bbeta^\star)\|^2  &&\text{with $\hat{\bbeta}_\lambda \in \argmin_{\bbeta\in\R^p}\sum_{i=1}^n \rho(y_i-\bx_i^\top \bbeta)$ } \\
    \hat{R}(\lambda) &:= p\frac{\|\bpsi_\lambda\|^2}{\tr[\bV_\lambda]^2} &&\text{with } \bV_\lambda = \frac{\partial \bpsi_\lambda}{\partial\bm{y}} \in \R^{n\times n}, \ \bpsi_\lambda := \psi_\lambda(\by - \bX \hat{\bbeta}_\lambda) \\
    \alpha(\lambda) &:= \alpha(\rho_{\lambda}, F_\epsilon, \gamma)  &&(\text{the solution to \eqref{eq:nonlinear} with $\rho=\rho_\lambda$}).
\end{align*}

\subsection{Proof of \Cref{prop:scaled_control}}\label{proof:scaled_control}
$\hat{R} (\lambda) = R(\lambda) + o_P(1)$ by \Cref{th:ofs}, while $R(\lambda)\to^p \alpha^2(\lambda)$ by \cite{thrampoulidis2018precise} and \Cref{th:system}. 
{
It remains to show the upper bound of $\alpha^2(\lambda)$. 
Inequality \eqref{eq:localization} in \Cref{th:system} with $\rho=\rho_\lambda$ implies that $\alpha(\lambda)$ is bounded from above as
$$
\alpha(\lambda) \le \frac{Q_{F_\epsilon}(r^2 c_\gamma)}{r c_\gamma} + \frac{b}{c_\gamma}
$$
where $r\in (0, 1]$ and $b\ge 0$ are any constant such that the coercivity condition
$$
\|\rho_\lambda\|_{\lip}^{-1}(\rho_\lambda(x)-\rho_\lambda(0)) \ge r (|x|-b) \quad \text{for all $x\in\R$}
$$
is satisfied. Now we verify that 
$$
r=\frac{\min\{|\psi(\pm 1)|\}}{\|\rho\|_{\lip}}\quad  \text{ and }\quad  b=\lambda \cdot \frac{\max\{|(\mp1)\psi(\pm1) -\rho(\pm1)+\rho(0)|\}}{\min\{|\psi(\pm 1)|\}}
$$
satisfy the coercivity condition. 
% \begin{align*}
%     a_{\rho_\lambda}(\lambda) &=\min(|\psi_\lambda(\pm \lambda)|)^{-1}  \|\rho_\lambda\|_{\lip}  = \min(\lambda |\psi(\pm 1)|)^{-1} \lambda \|\rho\|_{\lip} =   \min(|\psi(\pm 1)|)^{-1} \|\rho\|_{\lip} = a_\rho(1), \\
%     b_{\rho_\lambda}(\lambda) &= \frac{\max\{|(\pm \lambda) \psi_\lambda(\pm \lambda) - \rho_\lambda(\pm \lambda) + \rho_\lambda(0)|\}}{\min(|\psi_\lambda(\pm \lambda)|)} =  \frac{\lambda^2 \max\{|(\pm 1) \psi(\pm 1) - \rho(\pm 1) + \rho(0)|\}}{\lambda\min(|\psi(\pm 1)|)} = \lambda b_\rho(1),
% \end{align*}
By the convexity, evaluating the derivative at $\lambda$, using $\rho_\lambda(\cdot)=\lambda^2\rho(\cdot/\lambda)$ and $\psi_\lambda(\cdot)=\lambda\psi(\cdot/\lambda)$, 
we have
\begin{align*}
    \rho_\lambda(x) \ge \rho_\lambda(\lambda) + (x-\lambda) \psi_\lambda(\lambda) = \lambda \psi(1) x - \lambda^2 \psi(1) + \lambda^2\rho(1)
\end{align*}
for all $x\in \R$. By the same argument, evaluating the derivative at $-\lambda$ gives
$$
\rho_\lambda(x) \ge \rho_\lambda(-\lambda) + (x+\lambda) \psi_\lambda(-\lambda) = \lambda \psi(-1)x+\lambda^2\psi(-1) + \lambda^2\rho(-1). 
$$
Thanks to $\{0\}=\argmin_x \rho(x)$ by \Cref{as:loss}, $\psi(-1) < 0 < \psi(1)$ holds, and hence, 
\begin{align*}
    \rho_\lambda(x)-\rho_\lambda(0) &\ge \lambda \min\Bigl(|\psi(1)|, |\psi(1)|\Bigr)|x| - \lambda^2 \max\Bigl(|\psi(1)-\rho(1) + \rho(0)|, |-\psi(-1)-\rho(-1)+\rho(0)|\Bigr).
\end{align*}
Dividing the both sides by $\|\rho_\lambda\|_{\lip}=\lambda\|\rho\|_{\lip}$, we obtain
$$
\frac{\rho_\lambda(x)-\rho_\lambda(0)}{\|\rho_\lambda\|_{\lip}} \ge \frac{\min(|\psi(\pm1)|)}{\|\rho\|_{\lip}} |x| - \lambda \frac{\max(|(\mp)\psi(\pm)-\rho(\pm)+\rho(0)|)}{\|\rho\|_{\lip}}.
$$
This means that the pair of $(r, b)$ specified above satisfies the coercivity condition. Substituting this to the upper bound of $\alpha(\lambda)$, since the dependence on $\lambda$ only comes from $b$, we obtain
$$
\alpha(\lambda) \le \C(\rho, F_\epsilon, \gamma) (1+\lambda) , 
$$
which finishes the proof. 
}

\subsection{Proof of \Cref{th:alpha_lipschitz}}\label{proof:alpha_lipschitz}
Thanks to \Cref{prop:scaled_control}, we have
$$
\alpha^2(\lambda)-\alpha^2(\tilde{\lambda}) = \hat{R}(\lambda)-\hat{R}(\tilde{\lambda}) + o_P(1),
$$
so it suffices to show a suitable smoothness of the map $\lambda \mapsto \hat{R}(\lambda)$. 
\begin{lemma}\label{lm:tr_lambda_bound}
We have
    \begin{align*}
        \forall \lambda>0, \quad  |\tr[\bV_\lambda]| \le n, \quad 
        \PP\left(n^2 \cdot [\tr \bV_\lambda]^{-2} \leq \C(\rho, F_\epsilon, \gamma) (1+\lambda^{-2}) \right) \to 1.
    \end{align*}
\end{lemma}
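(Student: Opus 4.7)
The first inequality is immediate: the map $\by\mapsto\psi_\lambda(\by-\bX\hbbeta_\lambda)$ is $1$-Lipschitz because $\psi_\lambda$ is $1$-Lipschitz (by \Cref{prop:psi_lip} applied to $\rho_\lambda$), so the Jacobian $\bV_\lambda$ satisfies $\|\bV_\lambda\|_{op}\le 1$, from which $|\tr[\bV_\lambda]|\le n\|\bV_\lambda\|_{op}\le n$.

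For the second inequality, the plan is to invoke \Cref{lm:lb_trace} applied to the scaled loss $\rho_\lambda$ (which satisfies \Cref{as:loss} by construction), and then to express the resulting lower bound $B(\gamma,\rho_\lambda,F_\epsilon)$ as an explicit function of $\lambda$. Three elementary computations are needed. First, $\|\psi_\lambda\|_\infty=\lambda\|\psi\|_\infty$, directly from $\psi_\lambda(x)=\lambda\psi(x/\lambda)$. Second, the constant $\eta$ in condition (3) of \Cref{as:loss} is invariant under scaling: since $\psi_\lambda'(x)=\psi'(x/\lambda)$, we have
\[
\frac{\psi_\lambda(x)^2}{\|\psi_\lambda\|_\infty^2}+\psi_\lambda'(x)
=\frac{\psi(x/\lambda)^2}{\|\psi\|_\infty^2}+\psi'(x/\lambda)\ge \eta(\rho)^2
\]
for a.e.~$x\in\R$, so we may take $\eta(\rho_\lambda)=\eta(\rho)$. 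Third, by \Cref{prop:scaled_control}, $\alpha^2(\rho_\lambda,F_\epsilon,\gamma)\le\C(\rho,F_\epsilon,\gamma)(\lambda^2+1)$.

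Substituting these three facts into the definition of $B$ from \Cref{lm:lb_trace} gives
\[
B(\gamma,\rho_\lambda,F_\epsilon)
=C(\gamma)\min\Bigl(\eta(\rho)^4,\ \frac{\lambda^2\|\psi\|_\infty^2\eta(\rho)^2}{\alpha^2(\rho_\lambda,F_\epsilon,\gamma)}\Bigr)
\ge \C(\rho,F_\epsilon,\gamma)\min\Bigl(1,\ \frac{\lambda^2}{\lambda^2+1}\Bigr)
=\C(\rho,F_\epsilon,\gamma)\cdot\frac{\lambda^2}{\lambda^2+1},
\]
so that $\PP(n^2[\tr\bV_\lambda]^{-2}\le B(\gamma,\rho_\lambda,F_\epsilon)^{-1})\to 1$ yields the claim with the constant $\C(\rho,F_\epsilon,\gamma)(1+\lambda^{-2})$.

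The only subtlety I anticipate is checking that \Cref{lm:lb_trace} applies uniformly well to $\rho_\lambda$, i.e.~that all hypotheses transfer and that the constants only depend on $(\rho,F_\epsilon,\gamma)$ and not on $\lambda$; the three identities above take care of this. No additional probabilistic work is required beyond invoking \Cref{lm:lb_trace} and \Cref{prop:scaled_control}.
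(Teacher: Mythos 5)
Your proof is correct and follows essentially the same route as the paper: the first inequality comes from $\|\psi_\lambda\|_{\lip}=1$ and \Cref{prop:psi_lip}; the second comes from \Cref{lm:lb_trace} applied to $\rho_\lambda$, combined with the scaling identities $\|\psi_\lambda\|_\infty=\lambda\|\psi\|_\infty$ and $\eta(\rho_\lambda)=\eta(\rho)$ and the bound $\alpha^2(\lambda)\le\C(\rho,F_\epsilon,\gamma)(1+\lambda^2)$ from \Cref{prop:scaled_control}. The only cosmetic point worth tightening in your write-up: the step $\min(\eta^4, c\cdot t)\ge t\cdot\min(\eta^4,c)$ with $t=\lambda^2/(1+\lambda^2)\le 1$ is what justifies pulling $\lambda^2/(\lambda^2+1)$ out of the minimum, which is what the paper does explicitly, whereas your intermediate display with $\min(1,\lambda^2/(\lambda^2+1))$ is a slight shorthand; since $\min(1,t)=t$ for $t\le1$ the conclusion is the same.
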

\begin{proof}
    $\tr[\bV_\lambda]/n\le 1$ immediately follows from \Cref{prop:psi_lip} and $\|\psi_\lambda\|_{\lip} = \|\psi\|_{\lip}= 1$. Note in passing that $\psi_\lambda(\cdot)=\lambda\psi(\cdot/\lambda)$ satisfies $\|\psi_\lambda\|_{\infty}=\lambda\|\psi\|_\infty <+\infty$, and for almost every $x$,  
    \begin{align*}
        \psi_\lambda^2(x)/\|\psi_\lambda\|_{\infty}^2 + \psi_\lambda'(x) 
        &= \psi(x/\lambda)^2/\|\psi\|_{\infty} + \psi'(x/\lambda) 
      \\&\ge \eta^2>0
    \end{align*}
    thanks to \editline{\Cref{as:loss}(3)}
    where $\eta$ is a constant that only depends on $\rho$. Thus, 
    \Cref{lm:lb_trace} with $\rho=\rho_\lambda$ implies
    $$
    \PP\Bigl(\frac{\tr[\bV_\lambda]^2}{n^2} \ge C(\gamma) \min\Bigl(\eta^4, \frac{\lambda^2 \|\psi\|_{\infty}^2 \eta^2}{\alpha^2(\lambda)}\Bigr)\Bigr)\to 1.
    $$
    Putting this lower bound and 
    $
    \alpha^2(\lambda) \le C'(\gamma, \rho, F_\epsilon)(\lambda^2 + 1)
    $ by \Cref{prop:scaled_control} together, we have 
    $$
    \frac{\tr[\bV_\lambda]^2}{n^2} \ge
    C(\gamma) \min\Bigl(\eta^4, \frac{\lambda^2 \|\psi\|_{\infty}^2 \eta^2}{C'(\rho, F_\epsilon, \gamma)(1+\lambda^2)}\Bigr)
    \ge
    C(\gamma) \min\Bigl(\eta^4, \frac{\eta^2\|\psi\|_{\infty}^2}{C'(\rho, F_\epsilon, \gamma)}\Bigr)
     \frac{\lambda^2}{1+\lambda^2}
    $$
    with high probability. This finishes the proof.
\end{proof}

\begin{lemma}\label{lm:psi_lambda_lip}
    We have the followings for all $\lambda, \tilde{\lambda}>0$:
\begin{align*}
        \PP\Bigl(\|\bpsi_{\lambda} - \bpsi_{\tilde \lambda}\| \leq \sqrt{n} \C(\gamma, \rho, F_\epsilon) |\lambda-\tilde{\lambda}|^{1/2} (1+\lambda^{1/2}+\tilde{\lambda}^{1/2})\Bigr) &\to 1, \\
        \PP\Bigl(|\|\bpsi_{\lambda}\|^2 - \|\bpsi_{\tilde{\lambda}}\|^2| \leq n  \C(\gamma, \rho, F_\epsilon) |\lambda-\tilde\lambda|^{1/2} (1+\lambda^{3/2} + \tilde{\lambda}^{3/2})\Bigr) &\to 1.            
\end{align*}
\end{lemma}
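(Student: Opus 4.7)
\textbf{Proof plan for \Cref{lm:psi_lambda_lip}.} The plan is to bound $\|\bpsi_\lambda - \bpsi_{\tilde\lambda}\|$ first; the bound on the difference of squared norms follows by factoring. Writing $\br_\lambda := \by - \bX\hat{\bbeta}_\lambda$, the central step is the decomposition
\begin{equation*}
\bpsi_\lambda - \bpsi_{\tilde\lambda} = \bc + \bd, \qquad \bc := \psi_\lambda(\br_\lambda) - \psi_\lambda(\br_{\tilde\lambda}), \qquad \bd := \psi_\lambda(\br_{\tilde\lambda}) - \psi_{\tilde\lambda}(\br_{\tilde\lambda}),
\end{equation*}
so that $\bc$ only involves the change of residuals (same $\psi_\lambda$) while $\bd$ only involves the change of nonlinearity (same evaluation point $\br_{\tilde\lambda}$). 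Controlling $\bd$ is purely algebraic: Assumption \ref{as:tuning} yields $|d_i| \le \C(\rho)\,|\lambda-\tilde\lambda|$ and the trivial bound $|\psi_\mu(x)|\le\mu\|\psi\|_\infty$ yields $|d_i|\le(\lambda+\tilde\lambda)\|\psi\|_\infty$; multiplying these two pointwise estimates and summing over $i\in[n]$ produces $\|\bd\|^2 \le \C(\rho)\,n\,|\lambda-\tilde\lambda|(\lambda+\tilde\lambda)$, which is already of the desired $1/2$-H\"older order in $\lambda$.

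For $\bc$ the KKT conditions enter. Since $\psi_\lambda$ is $1$-Lipschitz and nondecreasing, \Cref{lm:psi_increasing} applied componentwise gives $\|\bc\|^2 \le \bc^\top(\br_\lambda - \br_{\tilde\lambda})$. The unregularized KKT conditions read $\bX^\top\bpsi_\lambda = \bm 0 = \bX^\top\bpsi_{\tilde\lambda}$, hence $\bX^\top(\bc + \bd) = \bm 0$; combined with the fact that $\br_\lambda - \br_{\tilde\lambda} = \bX(\hat{\bbeta}_{\tilde\lambda} - \hat{\bbeta}_\lambda)$ lies in the range of $\bX$, the inner product simplifies to
\begin{equation*}
\bc^\top(\br_\lambda - \br_{\tilde\lambda}) = -\bd^\top(\br_\lambda - \br_{\tilde\lambda}) \le \|\bd\|\cdot\|\br_\lambda - \br_{\tilde\lambda}\| \le \C(\rho)\sqrt n\,|\lambda-\tilde\lambda|\cdot\|\br_\lambda - \br_{\tilde\lambda}\|,
\end{equation*}
using $\|\bd\|\le\sqrt n\max_i|d_i|$. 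Switching to the transformed coordinates of \Cref{sec:proof_highlight} via $\br_\lambda - \br_{\tilde\lambda} = \bG(\hat{\bh}_{\tilde\lambda} - \hat{\bh}_\lambda)$, the standard Gaussian matrix estimate $\|\bG\|_{op} = O_P(\sqrt n)$ together with $\|\hat{\bh}_\mu\|^2 \to^p \alpha^2(\mu) \le \C(\gamma,\rho,F_\epsilon)(1+\mu^2)$ from \Cref{prop:scaled_control} gives $\|\br_\lambda - \br_{\tilde\lambda}\| = O_P(\sqrt n)(1+\lambda+\tilde\lambda)$. Combining, $\|\bc\|^2 = O_P(n)|\lambda-\tilde\lambda|(1+\lambda+\tilde\lambda)$; then using $\|\bpsi_\lambda-\bpsi_{\tilde\lambda}\|\le\|\bc\|+\|\bd\|$ and the elementary $(1+\lambda+\tilde\lambda)^{1/2}\le 1+\lambda^{1/2}+\tilde\lambda^{1/2}$ finishes the first inequality.

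The second inequality follows from $\bigl|\|\bpsi_\lambda\|^2 - \|\bpsi_{\tilde\lambda}\|^2\bigr| \le (\|\bpsi_\lambda\|+\|\bpsi_{\tilde\lambda}\|)\|\bpsi_\lambda - \bpsi_{\tilde\lambda}\|$, the trivial bound $\|\bpsi_\mu\| \le \sqrt n\,\mu\|\psi\|_\infty$, and the elementary estimate $(1+\lambda^{1/2}+\tilde\lambda^{1/2})(\lambda+\tilde\lambda) \le \C(1+\lambda^{3/2}+\tilde\lambda^{3/2})$ obtained by controlling the cross terms $\lambda^{1/2}\tilde\lambda$ and $\tilde\lambda^{1/2}\lambda$ through weighted Young's inequality with exponents $(3,3/2)$. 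The main obstacle is the choice of decomposition: one is tempted to bound $\hat{\bbeta}_\lambda - \hat{\bbeta}_{\tilde\lambda}$ directly, but the unregularized problem lacks strong convexity and no Lipschitz estimate of $\lambda\mapsto\hat{\bbeta}_\lambda$ is available. The KKT-driven cancellation $\bX^\top(\bc+\bd)=\bm 0$ transfers the $|\lambda-\tilde\lambda|$-control of $\bd$, which is free from Assumption \ref{as:tuning}, onto $\bc$, which is what lets us conclude without any quantitative Lipschitz continuity of the minimizer in $\lambda$.
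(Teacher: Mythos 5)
Your proof is correct and follows essentially the same route as the paper: the identical $\bc+\bd$ decomposition, \Cref{lm:psi_increasing} applied to $\bc$, the same KKT cancellation $\bX^\top(\bc+\bd)=\bm 0$ to trade $\bc$ for $\bd$ in the inner product with $\br_\lambda-\br_{\tilde\lambda}$, and the same bounds $\|\bG\|_{op}=O_P(\sqrt n)$ and $\|\hat\bh_\lambda\|\to^p\alpha(\lambda)\le\C(1+\lambda)$ from \Cref{prop:scaled_control}. The only cosmetic deviation is that you package the cancellation as $\bc^\top(\br_\lambda-\br_{\tilde\lambda})=-\bd^\top(\br_\lambda-\br_{\tilde\lambda})$ whereas the paper substitutes $\psi_{\tilde\lambda}(\br_{\tilde\lambda})$ for $\psi_\lambda(\br_\lambda)$ inside the bilinear form; these are the same step.
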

\begin{proof}
Note that the second display immediately follows from the first display since
\begin{align*}
    |\|\bpsi_{\lambda}\|^2 - \|\bpsi_{\tilde \lambda}\|^2| &= |\|\bpsi_{\lambda}\| - \|\bpsi_{\tilde\lambda}\|| (\|\bpsi_{\lambda}\| + \|\bpsi_{\tilde\lambda}\|)\\
    &\le |\|\bpsi_{\lambda}\| - \|\bpsi_{\tilde\lambda}\||  \sqrt{n} (\|\psi_\lambda\|_{\infty} + \|\psi_{\tilde{\lambda}}\|_{\infty})\\
    &=|\|\bpsi_{\lambda}\| - \|\bpsi_{\tilde\lambda}\||  \sqrt{n} \|\psi\|_{\infty}(\lambda+\tilde{\lambda}) \ && \text{ by $\|\psi_\lambda\|_{\infty}=\lambda\|\psi\|_{\infty}$}
\end{align*}
It remains to bound $|\|\bpsi_{\lambda}\| - \|\bpsi_{\tilde{\lambda}}\||$. Below, we write 
$\br_\lambda = \bepsilon - \bG \hat{\bh}_\lambda$ with $
\hat{\bh}_\lambda = \bSigma^{1/2}(\hat{\bbeta}_\lambda-\bbeta_*)$ so that
$\bpsi_\lambda = \psi_\lambda(\br_\lambda)$ and $\|\hat{\bh}_\lambda\|^2 = R(\lambda)$. 
By \Cref{as:tuning}, there exists some constant $c(\rho)>0$ that only depends on $\rho$ such that for all $\lambda, \tilde{\lambda}>0$,
$$
\sup_{\bu\in \R^n}\|\psi_\lambda(\bu) - \psi_{\tilde{\lambda}} (\bu)\|_2 \le \sqrt{n} \cdot \sup_{x\in \R}|\psi_\lambda(x)-\psi_{\tilde{\lambda}}(x)|\le \sqrt{n} c(\rho) |\lambda-\tilde{\lambda}|,
$$
so that $\|\bpsi_{\lambda} - \bpsi_{\tilde{\lambda}}\|$ can be upper bounded from above as 
$$
\|\bpsi_{\lambda} - \bpsi_{\tilde{\lambda}}\| 
\leq \|\psi_\lambda (\br_\lambda) -\psi_\lambda(\br_{\tilde{\lambda}})\| + \|\psi_\lambda(\br_{\tilde{\lambda}}) - \psi_{\tilde{\lambda}} (\br_{\tilde{\lambda}})\| \le \|\psi_\lambda (\br_\lambda) -\psi_\lambda(\br_{\tilde{\lambda}})\| + \sqrt{n}c(\rho)|\lambda-\tilde\lambda|,
$$
Here, using the \editline{KKT} conditions $\bG^\top \psi_\lambda(\br_\lambda) = \bG^\top \psi_{\tilde\lambda}(\br_{\tilde\lambda})=\bm{0}_p$, we bound the first from above as
\begin{align*}
    \|\psi_\lambda (\br_\lambda) -\psi_\lambda(\br_{\tilde{\lambda}})\|^2 &\le (\psi_\lambda(\br_\lambda)-\psi_{\lambda}(\br_{\tilde{\lambda}}))^\top(\br_\lambda - \br_{\tilde{\lambda}}) && \text{ by \Cref{lm:psi_increasing} and $\|\psi_\lambda\|_{\lip}=1$}\\
    &= (\psi_\lambda(\br_\lambda)-\psi_{\lambda}(\br_{\tilde{\lambda}}))^\top \bG (\hat{\bh}_{\tilde{\lambda}} - \hat{\bh}_{\lambda})\\
    &= (\psi_{\tilde \lambda}(\br_{\tilde \lambda})-\psi_{\lambda}(\br_{\tilde{\lambda}}))^\top \bG (\hat{\bh}_{\tilde{\lambda}} - \hat{\bh}_{\lambda}) && \text{ by $\bG^\top \psi_\lambda(\br_\lambda) = \bG^\top \psi_{\tilde\lambda}(\br_{\tilde\lambda})=\bm{0}_p$}\\
    &\le \|\psi_{\tilde \lambda}(\br_{\tilde \lambda})-\psi_{\lambda}(\br_{\tilde{\lambda}})\| \|\bG\|_{op} (\|\hat{\bh}_\lambda\| + \|\hat{\bh}_{\tilde\lambda}\|)\\
    &\le \sqrt{n} c(\rho) |\lambda-\tilde{\lambda}| \|\bG\|_{op} (\|\hat{\bh}_\lambda\| + \|\hat{\bh}_{\tilde\lambda}\|)
    % &\le \|\psi_{\tilde \lambda}(\br_{\tilde \lambda})-\psi_{\lambda}(\br_{\tilde{\lambda}})\| \cdot 2 \sqrt{n}(1-\sqrt{\gamma}) \cdot 2 (\alpha(\lambda)+\alpha(\tilde{\lambda})) && \text{ $\|\bG/\sqrt{n}\|_{op}\to^p (1-\sqrt{\gamma})$ and $\|\hat{\bh}_\lambda\|\to^p \alpha(\lambda)>0$} \\
    % &\le \C(\rho, \gamma) n |\lambda-\tilde{\lambda}| (\alpha(\lambda)+\alpha(\tilde{\lambda}))
\end{align*}
Therefore, using $\|\hat{\bm{h}}_\lambda\|\to^p\alpha(\lambda)$ and $\alpha(\lambda)\le \C(\rho, \gamma F_\epsilon) (1+\lambda)$ by \Cref{prop:scaled_control}, we have 
\begin{align*}
    \|\bpsi_{\lambda} - \bpsi_{\tilde{\lambda}}\|^2 &\le 2 \sqrt{n} c(\rho) |\lambda-\tilde{\lambda}| \|\bG\|_{op} (\|\hat{\bh}_\lambda\| + \|\hat{\bh}_{\tilde\lambda}\|) + 2 n c(\rho)^2 |\lambda-\tilde{\lambda}|^2\\
    &\le \C(\rho) \cdot n \cdot (n^{-1/2}\|\bG\|_{op}+1) \bigl(|\lambda-\tilde{\lambda}| (\|\hat{\bh}_\lambda\| + \|\hat{\bh}_{\tilde\lambda}\|) +  |\lambda-\tilde{\lambda}|^2 \bigr)\\
    &\le \C(\rho, \gamma, F_\epsilon) \cdot n \bigl(|\lambda-\tilde{\lambda}| (1+\lambda + \tilde{\lambda}) + |\lambda-\tilde{\lambda}|^2\bigr) \\
    &\le \C(\rho, \gamma, \epsilon) \cdot n |\lambda-\tilde{\lambda}|\cdot (1+\lambda + \tilde{\lambda})
\end{align*}
with high probability. This finishes the proof.
\end{proof}

\begin{lemma}\label{lm:tr_lambda_lip}
Suppose that $(\rho, F_\epsilon)$ satisfy \Cref{as:loss}, \Cref{as:noise} and \Cref{as:tuning}. 
Then, we have
for all $\lambda, \tilde{\lambda}>0$ that
\begin{align*}
    \PP\Bigl(n^{-2} |\tr [\bV_\lambda]^2 - \tr [\bV_{\tilde{\lambda}}]^2| \leq \C(\rho, F_\epsilon, \gamma) |\lambda-\tilde{\lambda}|^{1/2} (1+\lambda^{1/2} + \tilde{\lambda}^{1/2}) \Bigr) \to 1,
\end{align*}
\end{lemma}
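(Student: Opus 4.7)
The plan is to mimic the proof of \Cref{lm:diff_V}, substituting the role of the smoothing parameter $\mu$ with the scale parameter $\lambda$. First, since $|\tr[\bV_\lambda]|, |\tr[\bV_{\tilde\lambda}]| \le n$ by \Cref{lm:tr_lambda_bound}, the factorization
\begin{equation*}
|\tr[\bV_\lambda]^2 - \tr[\bV_{\tilde\lambda}]^2| \le 2n\,|\tr[\bV_\lambda - \bV_{\tilde\lambda}]|
\end{equation*}
reduces the task to proving $|\tr[\bV_\lambda - \bV_{\tilde\lambda}]| = O_P(n\,|\lambda-\tilde\lambda|^{1/2}(1+\lambda^{1/2}+\tilde\lambda^{1/2}))$ pointwise in $(\lambda, \tilde\lambda)$.

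To control this trace difference, use the convolution structure $\bm{\epsilon} = \bm{z} + \bm{\delta}$ from \Cref{as:noise}, so that the chain rule gives $\bV_\lambda - \bV_{\tilde\lambda} = (\partial/\partial \bm{z})(\bpsi_\lambda - \bpsi_{\tilde\lambda})$. Apply the second-order Stein formula (\Cref{th:second}) with $\bm{f} = \bpsi_\lambda - \bpsi_{\tilde\lambda}$: the Hessian $\bm{H}(\bm{z}) = \diag(\phi''(z_i))$ satisfies $\|\bm{H}(\bm{z})\|_{op} \le \|\phi''\|_\infty$ by \Cref{as:noise}, and $\|\bV_\lambda - \bV_{\tilde\lambda}\|_{op} \le 2$ by \eqref{eq:df_Vmu}. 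Together with the trivial deterministic bound $\|\bpsi_\lambda - \bpsi_{\tilde\lambda}\|^2 \le 2n\|\psi\|_\infty^2(\lambda^2 + \tilde\lambda^2)$, Markov's inequality then yields
\begin{equation*}
\tr[\bV_\lambda - \bV_{\tilde\lambda}] = \phi'(\bm{z})^\top (\bpsi_\lambda - \bpsi_{\tilde\lambda}) + O_P(n^{1/2}(1+\lambda+\tilde\lambda)).
\end{equation*}

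Next, bound the principal term by Cauchy--Schwarz together with \Cref{lem:phi_concentrates}, which gives $\|\phi'(\bm{z})\| = O_P(n^{1/2})$, and \Cref{lm:psi_lambda_lip}, which gives $\|\bpsi_\lambda - \bpsi_{\tilde\lambda}\| = O_P(n^{1/2}|\lambda-\tilde\lambda|^{1/2}(1+\lambda^{1/2}+\tilde\lambda^{1/2}))$. Using the elementary inequality $(1+\lambda+\tilde\lambda)^{1/2} \le 1+\lambda^{1/2}+\tilde\lambda^{1/2}$, this produces $|\phi'(\bm{z})^\top (\bpsi_\lambda - \bpsi_{\tilde\lambda})| = O_P(n\,|\lambda-\tilde\lambda|^{1/2}(1+\lambda^{1/2}+\tilde\lambda^{1/2}))$. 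For each fixed pair $(\lambda, \tilde\lambda)$ with $\lambda \ne \tilde\lambda$, the Stein residual $O_P(n^{1/2}(1+\lambda+\tilde\lambda))$ is $o_P$ of this leading order and can be absorbed by enlarging the constant once $n$ is large enough. Multiplying by the factor $2n$ from the reduction step gives the target bound.

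The principal obstacle is the Stein approximation step: it is what transfers the $1/2$-H\"older continuity of $\lambda \mapsto \bpsi_\lambda$ (established in \Cref{lm:psi_lambda_lip} via \Cref{as:tuning} and the first-order optimality conditions) to the corresponding divergences $\tr[\bV_\lambda]$. Without the smoothness of $F$ imposed by \Cref{as:noise}, closeness of the two vector fields in the Euclidean norm does not a priori imply closeness of their divergences, the very same issue already encountered in the proof of \Cref{lm:diff_V}.
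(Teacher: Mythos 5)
Your proof is correct and follows essentially the same route as the paper's: the reduction $|\tr[\bV_\lambda]^2 - \tr[\bV_{\tilde\lambda}]^2| \le 2n|\tr[\bV_\lambda - \bV_{\tilde\lambda}]|$ via \Cref{lm:tr_lambda_bound}, the chain rule to pass from $\partial/\partial\bm{\epsilon}$ to $\partial/\partial\bm{z}$, the second-order Stein formula (\Cref{th:second}) to replace $\tr[\bV_\lambda-\bV_{\tilde\lambda}]$ with $\phi'(\bm{z})^\top(\bpsi_\lambda-\bpsi_{\tilde\lambda})$, and Cauchy--Schwarz together with \Cref{lem:phi_concentrates} and \Cref{lm:psi_lambda_lip} for the principal term. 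One small point in your favor: you explicitly track the $(1+\lambda+\tilde\lambda)$ factor in the Stein residual arising from $\|\bpsi_\lambda-\bpsi_{\tilde\lambda}\|^2 \le 2n\|\psi\|_\infty^2(\lambda^2+\tilde\lambda^2)$, whereas the paper leans on ``by the same argument as \Cref{lm:diff_V}'' and writes the residual as $O_P(n^{1/2})$ without the $\lambda$-dependent factor; since the claim is pointwise in $(\lambda,\tilde\lambda)$ this is harmless either way, but your accounting is the more careful one. The invocation of $(1+\lambda+\tilde\lambda)^{1/2}\le 1+\lambda^{1/2}+\tilde\lambda^{1/2}$ is not actually needed once you observe the residual is of lower order in $n$ for fixed $(\lambda,\tilde\lambda)$, so that clause could simply be dropped.
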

\begin{proof}
By the same argument in the proof of \Cref{lm:diff_V}, the variant of second order Stein's formula leads to 
\begin{align*}
    \tr[\bV(\lambda)]-\tr[\bV(\tilde\lambda)] = \partial_{\bm{z}} (\bpsi_\lambda-\bpsi_{\tilde\lambda}) = \phi'(\bm{z})^\top (\bpsi_{\lambda}-\bpsi_{\tilde{\lambda}}) +  O_P(n^{1/2}), 
\end{align*}
where $\phi$ is the density function in \Cref{as:noise} such that $\|\phi''\|_{\infty} <+\infty$.
By \Cref{lm:psi_lambda_lip}, \Cref{lem:phi_concentrates}, and \editline{Cauchy--Schwarz}, we have 
$$
|\tr[\bV_\lambda]-\tr[\bV_{\tilde\lambda}]|  \le  \|\phi'(\bm{z})\| \|\bpsi_{\lambda}-\bpsi_{\tilde{\lambda}}\| + O_P(\sqrt{n}) \le n  \C(\rho, F_\epsilon, \gamma)|\lambda-\tilde{\lambda}|^{1/2} (1+\lambda^{1/2} + \tilde{\lambda}^{1/2})
$$
with high probability. Combined with $\tr[\bV_\lambda] \in [0,n]$ from \Cref{lm:tr_lambda_bound}, we conclude the proof.
\end{proof}

\begin{proof}[Proof of \Cref{th:alpha_lipschitz}]
    \Cref{lm:tr_lambda_bound}, \Cref{lm:psi_lambda_lip}, and \Cref{lm:tr_lambda_lip} imply that the followings events hold simultaneously with high probability:
    \begin{align*}
        n^2 \tr [\bV_{\lambda'}]^{-2} 
        &\leq
        \C(\rho, F_\epsilon, \gamma) (1+\lambda'^{-2}) 
        &&\text{ for }\lambda'\in\{\lambda, \tilde{\lambda}\}, \\
        n^{-1} |\|\bpsi_{\lambda}\|^2-\|\bpsi_{\tilde{\lambda}}\|^2| 
        &\leq
        \C(\rho, F_\epsilon, \gamma)|\lambda-\tilde{\lambda}|^{1/2} (1+\lambda^{1/2} + \tilde{\lambda}^{1/2}), \\
        n^{-2}\|\tr [\bV_\lambda]^2 - \tr [\bV_{\tilde{\lambda}}]^2| 
        &\leq
        \C(\rho, F_\epsilon, \gamma) |\lambda-\tilde{\lambda}|^{1/2} (1+\lambda^{1/2} + \tilde{\lambda}^{1/2}).
    \end{align*}
    Thus, we have
    \begin{align*}
        |\hat{R}(\lambda) - \hat{R}(\tilde \lambda)|
        &= \Bigl|\frac{p \|\bpsi_{\lambda}\|^2}{\tr [\bV_\lambda]^2} - \frac{p\|\bpsi_{\tilde{\lambda}}\|^2}{\tr[\bV_{\tilde{\lambda}}]^2}\Bigr| \\
        &=
        \frac{n^2}{\tr [\bV_{\lambda}]^2}\frac{n}{p} \Bigl|\frac{\|\bpsi_\lambda\|^2}{n} - \frac{\tr[\bV_\lambda]^2}{\tr[\bV_{\tilde{\lambda}}]^2} \frac{\|\bpsi_{\tilde{\lambda}}\|^2}{n}\Bigr|
        \\
        &\le
        \frac{n^2}{\tr [\bV_{\lambda}]^2}\frac{n}{p} \Bigl(\frac{|\|\bpsi_\lambda\|^2-\|\bpsi_{\tilde{\lambda}}\|^2|}{n} + \frac{n^2}{\tr [\bV_{\tilde{\lambda}}]^2}\frac{|\tr [\bV_{\lambda}]^2 - \tr [\bV_{\tilde{\lambda}}]^2|}{n^2}\frac{\|\bpsi_{\tilde{\lambda}}\|^2}{n} \Bigr)
        \\
     &\le C(\rho, \gamma, F_\epsilon) |\lambda-\tilde{\lambda}|^{1/2}(
        1+\lambda^{-2})
        \Bigl(
            1+\lambda^{1/2} + \tilde{\lambda}^{1/2} + (1+\tilde{\lambda}^{-2})(1+\lambda^{1/2} + \tilde{\lambda}^{1/2})\tilde{\lambda}
        \Bigr)
    \end{align*}
    Combined with $\hat{R}(\lambda)\to^p \alpha^2(\lambda)$ for each $\lambda\in (0,\infty)$, we have the following for all $\lambda, \tilde{\lambda}>0$:
    $$
    |\alpha^2(\lambda)-\alpha^2(\tilde{\lambda})| \le C(\rho, \gamma, F_\epsilon) |\lambda-\tilde{\lambda}|^{1/2}L(\lambda, \tilde{\lambda}),
    $$
    where $L(\lambda, \tilde{\lambda}):= (
        1+\lambda^{-2})
        \bigl(
            1+\lambda^{1/2} + \tilde{\lambda}^{1/2} + (1+\tilde{\lambda}^{-2})(1+\lambda^{1/2} + \tilde{\lambda}^{1/2})\tilde{\lambda}
        \bigr)$. Since $\sup_{\lambda, \tilde{\lambda} \in [\lambda_{\min}, \lambda_{\max}]}L(\lambda, \tilde{\lambda})$ is bounded from above by some positive constant $C(\lambda_{\min}, \lambda_{\max})$, we conclude the proof.
\end{proof}

\subsection{Proof of \Cref{th:lambda_tuning}}
\begin{lemma}\label{lm:tuning_delta}
    Fix $I=[\lambda_{\min}, \lambda_{\max}]$ for some $0<\lambda_{\min}<\lambda_{\max}<+\infty$. For all $N\in \mathbb{N}$, let $I_N=\{\lambda_{\min}^{i/N}\cdot \lambda_{\max}^{1-i/N}: i=0, 1, \dots N\}$ be the finite grid over $I$ and take $\hat{\lambda}_N \in \argmin_{\lambda\in I_N} \hat{R}(\lambda)$. 
    Then, for all $\delta>0$, there exists $N_\delta=N(\rho, F_\epsilon, \gamma, \lambda_{\min}, \lambda_{\max}, \delta)\in \mathbb{N}$ such that 
    $$
    \lim_{n\to\infty} \PP(|\alpha^2(\hat{\lambda}_{N_\delta}) - \alpha^2(\lambda_{\opt})| < \delta) = \lim_{n\to\infty} \PP(|R(\hat{\lambda}_{N_\delta}) - \alpha^2(\lambda_{\opt})| < \delta)\to 1,
    $$
    where $\hat{\lambda}_{N_\delta}\in \argmin_{\lambda\in I_{N_\delta}} \hat{R}(\lambda)$ and $\lambda_{\opt}\in \argmin_{\lambda\in I}\alpha^2(\lambda)$. 
\end{lemma}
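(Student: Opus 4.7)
The plan is to combine the local $1/2$-H\"older continuity of \( \lambda\mapsto\alpha^2(\lambda) \) from \Cref{th:alpha_lipschitz} with the pointwise consistency \( \hat R(\lambda)\to^p\alpha^2(\lambda) \) and \( R(\lambda)\to^p\alpha^2(\lambda) \) from \Cref{prop:scaled_control}. The key observation is that because we are allowed to pick \( N_\delta \) depending only on \( (\rho,F_\epsilon,\gamma,\lambda_{\min},\lambda_{\max},\delta) \) and not on \( n \), the grid \( I_{N_\delta} \) is a fixed finite set, so a finite union bound turns pointwise consistency into uniform consistency over \( I_{N_\delta} \).

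First, I would use \Cref{th:alpha_lipschitz} on \( I=[\lambda_{\min},\lambda_{\max}] \) to get a constant \( L=\C(\rho,F_\epsilon,\gamma,\lambda_{\min},\lambda_{\max}) \) such that \( |\alpha^2(\lambda)-\alpha^2(\tilde\lambda)|\le L|\lambda-\tilde\lambda|^{1/2} \) for all \( \lambda,\tilde\lambda\in I \). Next, since consecutive points of the log-scale grid \( I_N \) are separated by at most \( \lambda_{\max}((\lambda_{\max}/\lambda_{\min})^{1/N}-1) \), which tends to \( 0 \) as \( N\to+\infty \), I would choose \( N_\delta \) large enough (depending only on \( \delta,L,\lambda_{\min},\lambda_{\max} \)) so that every \( \lambda\in I \) is within distance \( (\delta/(4L))^2 \) of some element of \( I_{N_\delta} \). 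In particular, there exists \( \tilde\lambda_\star\in I_{N_\delta} \) with \( |\alpha^2(\tilde\lambda_\star)-\alpha^2(\lambda_{\opt})|\le \delta/4 \).

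Then, using that \( I_{N_\delta} \) has \( N_\delta+1 \) points (a fixed number independent of \( n \)), I would apply \Cref{prop:scaled_control} together with a union bound over this finite grid to obtain
\[
\PP\Bigl(\max_{\lambda\in I_{N_\delta}} |\hat R(\lambda)-\alpha^2(\lambda)|<\delta/8 \ \text{ and }\ \max_{\lambda\in I_{N_\delta}} |R(\lambda)-\alpha^2(\lambda)|<\delta/8\Bigr)\to 1.
\]
On that high-probability event, the definition \( \hat\lambda_{N_\delta}\in\argmin_{\lambda\in I_{N_\delta}}\hat R(\lambda) \) gives \( \hat R(\hat\lambda_{N_\delta})\le \hat R(\tilde\lambda_\star) \), and chaining the inequalities
\[
\alpha^2(\lambda_{\opt})\le \alpha^2(\hat\lambda_{N_\delta})\le \hat R(\hat\lambda_{N_\delta})+\tfrac{\delta}{8}\le \hat R(\tilde\lambda_\star)+\tfrac{\delta}{8}\le \alpha^2(\tilde\lambda_\star)+\tfrac{\delta}{4}\le \alpha^2(\lambda_{\opt})+\tfrac{\delta}{2},
\]
yields \( |\alpha^2(\hat\lambda_{N_\delta})-\alpha^2(\lambda_{\opt})|\le \delta/2<\delta \). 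The bound on \( |R(\hat\lambda_{N_\delta})-\alpha^2(\lambda_{\opt})| \) then follows by adding \( |R(\hat\lambda_{N_\delta})-\alpha^2(\hat\lambda_{N_\delta})|\le \delta/8 \) on the same event.

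There is no real obstacle beyond bookkeeping, since the hard analytic work (H\"older continuity of \( \alpha^2 \) and consistency of \( \hat R \) at a fixed \( \lambda \)) is already packaged in \Cref{th:alpha_lipschitz} and \Cref{prop:scaled_control}. The only subtle point is making sure that \( N_\delta \) can indeed be chosen independently of \( n \): this hinges precisely on the H\"older continuity providing a quantitative (and \( n \)-free) modulus of continuity on \( I \), so that a single deterministic grid resolution suffices, after which the pointwise consistency handles everything through a finite union bound.
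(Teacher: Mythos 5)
Your proof is correct and takes essentially the same route as the paper: both use the H\"older continuity of $\alpha^2(\cdot)$ from \Cref{th:alpha_lipschitz} to pick an $n$-free grid resolution $N_\delta$ with discretization error $\le\delta/4$, and both use pointwise consistency of $\hat R$ and $R$ over the finite grid (via a union bound) to control the stochastic error. The only cosmetic difference is that you chain the inequalities $\alpha^2(\lambda_{\opt})\le\alpha^2(\hat\lambda_{N_\delta})\le\hat R(\hat\lambda_{N_\delta})+\delta/8\le\cdots$ directly, whereas the paper decomposes $\alpha^2(\hat\lambda_N)-\alpha^2(\lambda_{\opt})$ additively into $\bigl(\alpha^2(\hat\lambda_N)-\min_{\lambda\in I_N}\alpha^2(\lambda)\bigr)+\bigl(\min_{\lambda\in I_N}\alpha^2(\lambda)-\alpha^2(\lambda_{\opt})\bigr)$ and bounds the first piece by re-running the argument of the proof of \Cref{cor:tuning} with $\alpha^2$ in place of $R$; these are two equivalent organizations of the same argument.
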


\begin{proof}[Proof of \Cref{lm:tuning_delta}]
First, we prove $\PP(|\alpha^2(\hat{\lambda}_{N_{\delta}}) - \alpha^2(\lambda_{\opt})| < \delta)\to 1$. 
Let us take some $N\in\mathbb{N}$ to be specified later. Consider the decomposition
\begin{align*}
    0 < \alpha^2(\hat{\lambda}_N)-\alpha^2(\lambda_{\opt}) = \alpha^2(\hat{\lambda}_N) - \min_{\lambda\in I_N} \alpha^2(\lambda)  + \min_{\lambda\in I_N} \alpha^2(\lambda)- \alpha^2(\lambda_{\opt}).
\end{align*}
Now, the cardinality of $I_N$ is finite and $\hat{R}(\lambda) = \alpha^2(\lambda) + o_P(1)$ for all $\lambda>0$ by \Cref{prop:scaled_control}. Then, by the same argument of 
\Cref{proof:cor:tuning}, where the role of $R$ is replaced by $\alpha^2$, we have
$$
\alpha^2(\hat{\lambda}_N) - \min_{\lambda\in I_N} \alpha^2(\lambda) = o_P(1), 
$$ 
so in particular $\lim_{n\to\infty}\PP(\alpha^2(\hat{\lambda}_N) - \min_{\lambda\in I_N} \alpha^2(\lambda)  < \delta/2) = 1$.
Below, we show $\min_{\lambda\in I_N} \alpha^2(\lambda) -\alpha^2(\lambda_{\opt}) \le \delta/2$ for some $N=N_\delta$. 
By the definition of $I_N$, we can take $\lambda^{\star}_N \in I_N$ such that
$\lambda_N^\star \leq \lambda_{\opt} < (\lambda_{\max}/\lambda_{\min})^{1/N} \lambda^{\star}_N$, so that 
$$|\lambda_{\opt} - \lambda^\star_N| \leq [(\lambda_{\max}/\lambda_{\min})^{1/N} - 1] \lambda^{\star}_N \leq 
[(\lambda_{\max}/\lambda_{\min})^{1/N} - 1] \lambda_{\max}.
$$
On the other hand, \Cref{th:alpha_lipschitz} implies that 
$$
\text{for all $\lambda, \tilde{\lambda}\in I = [\lambda_{\min}, \lambda_{\max}]$}, \quad 
|\alpha^2(\lambda)-\alpha^2(\tilde{\lambda})| \le L |\lambda-\tilde{\lambda}|^{1/2},
$$
where $L=L(\rho, F_\epsilon, \gamma, \lambda_{\min}, \lambda_{\max})>0$ is some positive constant. From the above displays, it follows that
\begin{align*}
    0 &< \min_{\lambda\in I_{N}} \alpha^2(\lambda) - \alpha^2(\lambda_{\opt}) && \text{ by $\lambda_{\opt}\in \argmin_{\lambda\in I} \alpha^2(\lambda)$ and $I_N \subset I$}\\
    &\leq \alpha^2(\lambda^\star_{N}) - \alpha^2(\lambda_{\opt}) &&\text{ by $\lambda_{N}^* \in I_{N}$}\\
    &\leq L |\lambda^\star_{N} -\lambda_{\opt}|^{1/2} \quad &&\text{ by  $\lambda^\star_{N}, \lambda_{\opt}\in I$} \\
    &\le L \bigl\{\lambda_{\max} [(\lambda_{\max}/\lambda_{\min})^{1/N}-1]\bigr\}^{1/2} && \text{ by $|\lambda_{\opt} - \lambda^\star_N| \leq 
    [(\lambda_{\max}/\lambda_{\min})^{1/N} - 1] \lambda_{\max}$}\\
    &\le \delta/2 &&\text{ if $(\lambda_{\max}/\lambda_{\min})^{1/N} \le 1+ {\delta^2}/(4L\lambda_{\max})$}. 
\end{align*}
Therefore, if we take $N=N_\delta:=\lceil{\log({\lambda_{\max}}/{\lambda_{\min}})/\log\{1+{\delta^2}/(4L\lambda_{\max})\}}\rceil$, we obtain $\PP(|\alpha^2(\hat{\lambda}_{N_\delta}) - \alpha^2(\lambda_{\opt})| < \delta)\to 1$. 

Next, we prove  $\PP(|R(\hat{\lambda}_N) - \alpha^2(\lambda_{\opt})| \le \epsilon)\to 1$. By the triangle inequality, it follows that
\begin{align*}
    |R(\hat{\lambda}_N) -  \alpha^2(\lambda_{\opt})| \le |R(\hat{\lambda}_N) - \min_{\lambda \in I_N}{R}(\lambda) | +
        | \min_{\lambda\in I_N} R(\lambda) - \min_{\lambda\in I_N} \alpha^2(\lambda)| + |\min_{\lambda\in I_N} \alpha^2(\lambda) - \alpha^2(\lambda_{\opt})|.
\end{align*}
The first term is $o_P(1)$ by \Cref{cor:tuning}, while the second term is $o_P(1)$ as $R(\lambda)\to^p \alpha^2(\lambda)$ for all $\lambda>0$ and the cardinality of $I_N$ is finite. For the third term, we have shown that it is less than $\delta/2$ for $N=N_\delta$. This completes the proof of $\PP(|R(\hat{\lambda}_{N_\delta}) - \alpha^2(\lambda_{\opt})| \le \delta)\to 1$. 
\end{proof}

\begin{proof}[Proof of \Cref{th:lambda_tuning}]
Our goal is to construct an array $(N_n)_{n=1}^\infty$ of integers such that
\begin{align}\label{eq:increasing_n}
   \forall\epsilon>0, \quad \lim_{n\to\infty} \PP(|\alpha^2(\hat{\lambda}_{N_n}) - \alpha^2(\lambda_{\opt})| > \epsilon) + \PP(|R(\hat{\lambda}_{N_n}) - \alpha^2(\lambda_{\opt})| > \epsilon) = 0. 
\end{align}
By \Cref{lm:tuning_delta} with $\delta=1/k$, there exists a function $N: \mathbb{N}\to\mathbb{N}$ such that
\begin{equation}\label{eq:k}
    \forall k\in \mathbb{N}, \quad \lim_{n\to\infty}\quad \underbrace{\PP(|\alpha^2(\hat{\lambda}_{{N}(k)}) - \alpha^2(\lambda_{\opt})| \ge 1/k) + \PP(|R(\hat{\lambda}_{{N}(k)}) - \alpha^2(\lambda_{\opt})| \ge 1/k)}_{:=p_{n,k}} = 0. 
\end{equation}
Thus, if we can find some function $\varphi:\mathbb{N}\to\mathbb{N}$ such that $\lim_{n\to\infty}\varphi(n)\to\infty$ and $\lim_{n\to\infty} p_{n, \varphi(n)}=0$, the array $\{N_n\}_{n=1}^\infty:=\{N\circ\varphi(n)\}_{n=1}^\infty$ satisfies \eqref{eq:increasing_n}, thereby completing the proof. 
Below we construct such $n\mapsto \varphi(n)$.

By \eqref{eq:k}, there exists an array of integers $\{n_k\}_{k=1}^\infty$ such that 
$$
\forall k\in \mathbb{N}, \quad \forall n \ge n_k, \quad  |p_{n, k}| \le 1/k. 
$$
Here, we assume without of loss of generality that $\{n_k\}_k$ is strictly increasing; otherwise redefine recursively as $\tilde n_k :=\max(\tilde n_{k-1}, n_k)+1$. For this $\{n_k\}_{k=1}^\infty$, define the step function $\varphi:\mathbb{N}\to\mathbb{N}$ as $\varphi(n) := \sum_{k=1}^\infty k \bm{1}\{n_k \le n< n_{k+1}\}$. 
Then, we have $\lim_{n\to\infty} \varphi(n)=+\infty$, and 
% because $\varphi$ is unbounded. Indeed,
% if $\varphi$ was bounded by some integer $K$ then for $n = n_{K+1}$ we should have $\varphi(n) = K+1$, a contradiction.
$|p_{n,k}| \le 1/k$ holds for $k=\varphi(n)$  since $n\ge n_{k}$ by construction. Therefore, we obtain
$$
\forall n\ge n_1, \quad |p_{n,\varphi(n)}|\le 1/\varphi(n).
$$
Since $\varphi(n)\to+\infty$ as $n\to\infty$, we have $\lim_{n\to\infty} p_{n,\varphi(n)}=0$. This finishes the proof. 
\end{proof}

\section{Additional numerical simulation}\label{sec:additional_simulation}
\subsection{Relaxing \Cref{as:noise}}\label{subsec:relax_noise_assumption}
\editline{
In the same setting as \Cref{sec:numeric}, we change the noise distribution to be the following discrete distribution so that \Cref{as:noise} is not satisfied:
\begin{align*}
    \epsilon_i \iid  3 \cdot \lceil \tdist (\df=2) \rceil 
\end{align*}
where $\lceil x \rceil = \max\{n\in\mathbb{Z}: n\le x\}$. \Cref{fig:risk_consistency_integer} implies  that our result still holds in this setting.
}

\begin{figure}[htpb]
    \centering
    \begin{subfigure}[b]{0.49\textwidth}
        \centering
        \includegraphics[width=\textwidth]{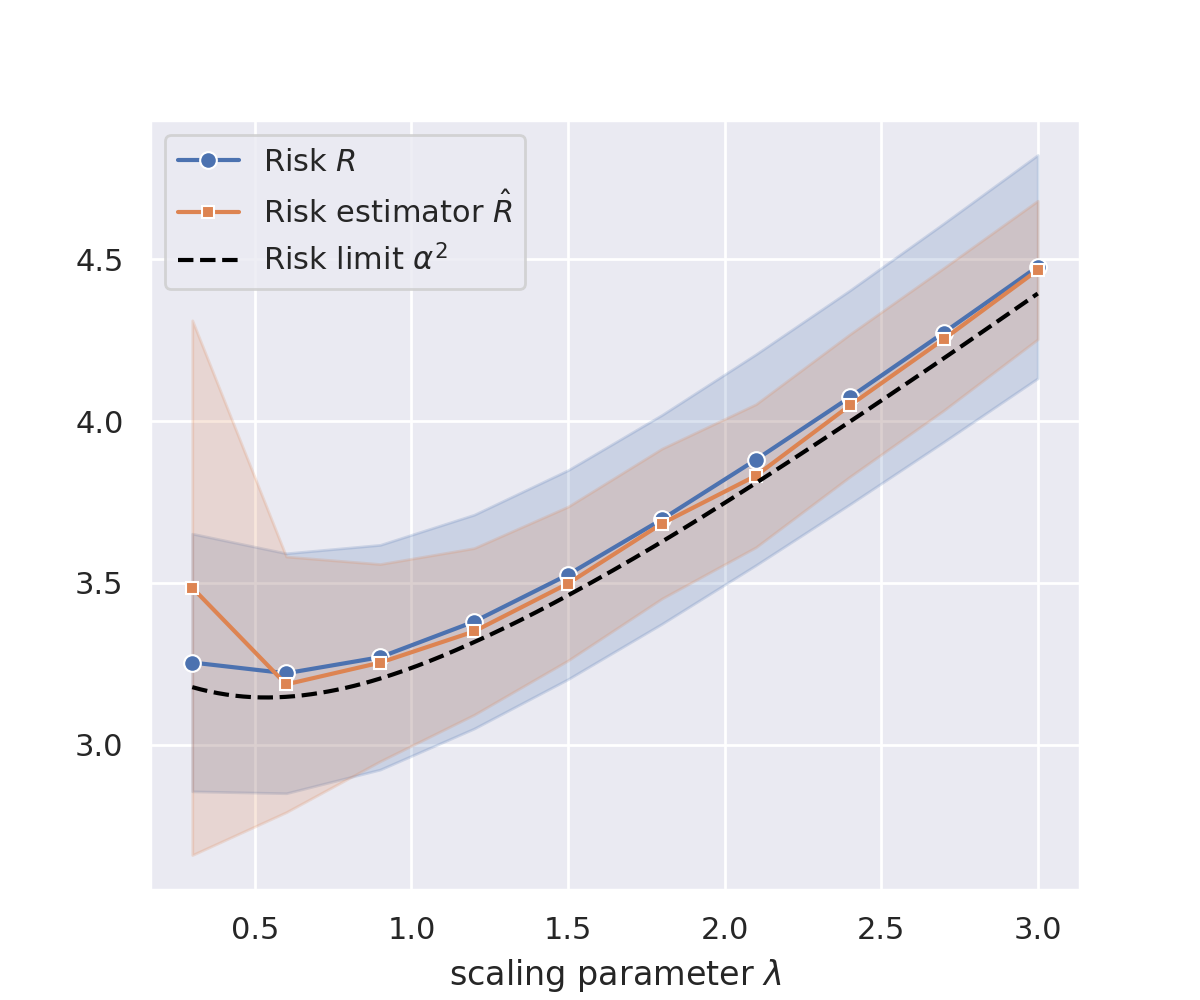}
        \caption{$R(\lambda)$, $\hat{R}(\lambda)$ and $\alpha^2(\lambda)$.}
        \label{subfig:ofs_consistency_sigma0}
    \end{subfigure}
    \begin{subfigure}[b]{0.49\textwidth}
        \centering
        \includegraphics[width=\textwidth]{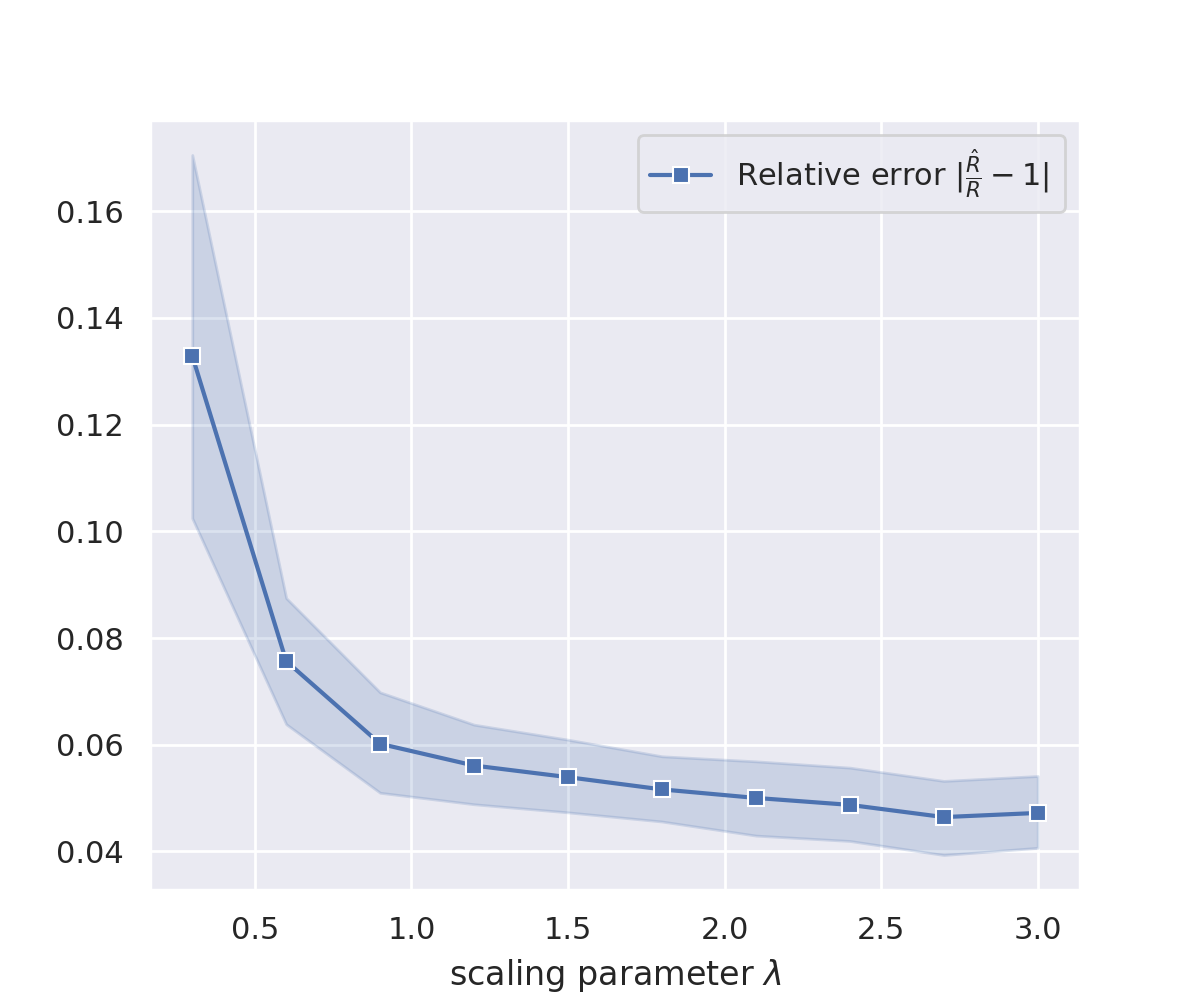}
        \caption{Relative error $|\hat{R}(\lambda)/R(\lambda)-1|$}
        \label{subfig:ofs_relative_error_sigma0}
    \end{subfigure}
    \caption{
        \editline{
        Plot of the out-of-sample error $R(\lambda)$ and estimator $\hat{R}(\lambda)$ over 100 repetitions, with $n=4000$, $p=1200$, the Huber loss for different values of the scale parameters $\lambda$. The noise distribution is $3 \lceil \tdist (\df=2) \rceil $. 
    $\alpha^2(\lambda)$ is the theoretical limit given by the nonlinear system \eqref{eq:intro_nonlinear}.
        }}
    \label{fig:risk_consistency_integer}
\end{figure}

% \begin{figure}
%     \centering
%     \begin{subfigure}[b]{0.49\textwidth}
%         \centering
%         \includegraphics[width=\textwidth]{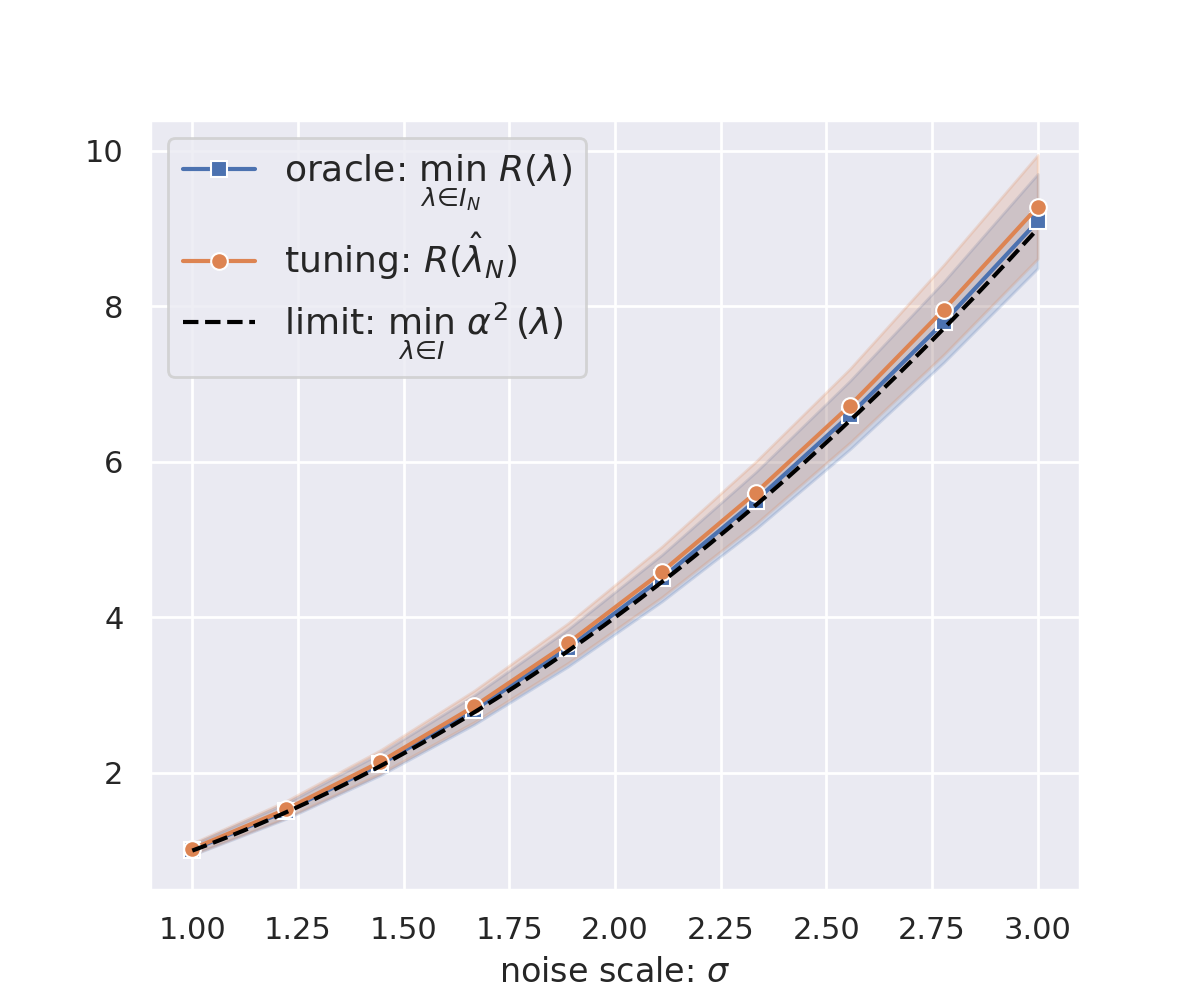}
%         \caption{Adaptive tuning vs oracles}
%         \label{subfig:tuning_oracle_sigma0}
%     \end{subfigure}
%     \begin{subfigure}[b]{0.49\textwidth}
%         \centering
%         \includegraphics[width=\textwidth]{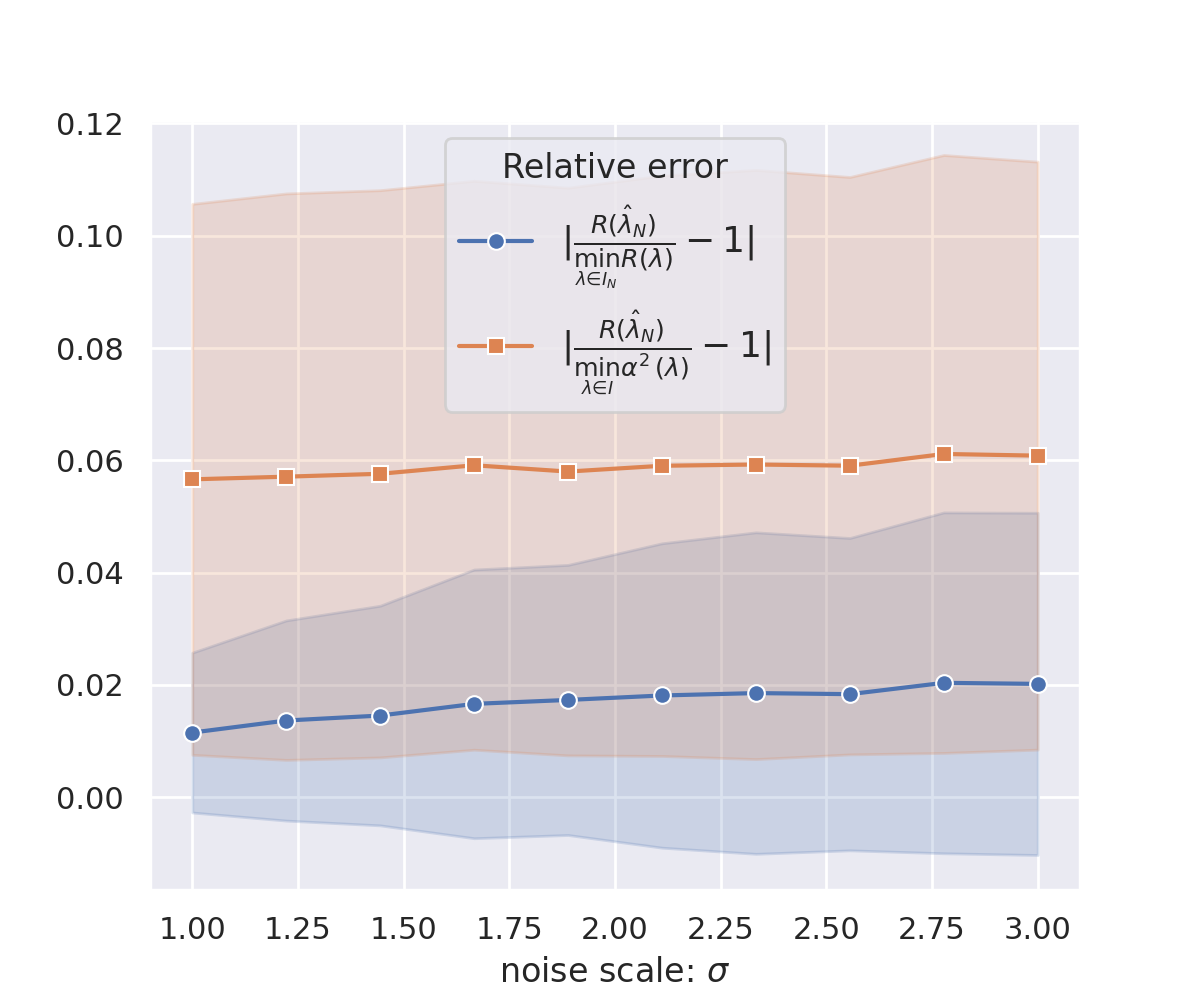}
%         \caption{Relative error}
%         \label{subfig:tuning_relative_error_sigma0}
%     \end{subfigure}
%     \caption{
%     \editline{
%     Adaptive tuning as the scale of noise $\sigma$ changes as $F_\epsilon = \sigma \cdot \lceil \tdist (\df=2) \rceil $.
%     $I=[0.5,5]$, $n=4000$, $p=1200$, and $I_N$ is the uniform grid in log-scale of length $101$. 
%     ${R}(\hat{\lambda}_N)$ is the out-of-sample error with $\lambda$ selected by $\hat{R}$, $\min_{\lambda\in I_N} R(\lambda)$ is the optimal out-of-sample error among $I_N$, and $\min_{\lambda\in I}\alpha^2(\lambda)$ is the theoretically optimal risk limit. We repeat $100$ times.
%     }}
%     \label{fig:tuning_sigma0}
% \end{figure}

\editline{
\subsection{Robustness of risk estimation across covariate distributions}\label{subsec:universality}
We vary the distribution of the covariate $\bX$ from Gaussian to Rademacher, uniform and  Student's t-distribution in \Cref{fig:universality}. 
 In each of these settings, we confirm the effectiveness of the risk estimator, suggesting that our results extend to a broader class of covariate distributions.

\begin{figure}[htpb]
    \centering
    \begin{subfigure}[b]{0.32\textwidth}
        \centering
        \includegraphics[width=\textwidth]{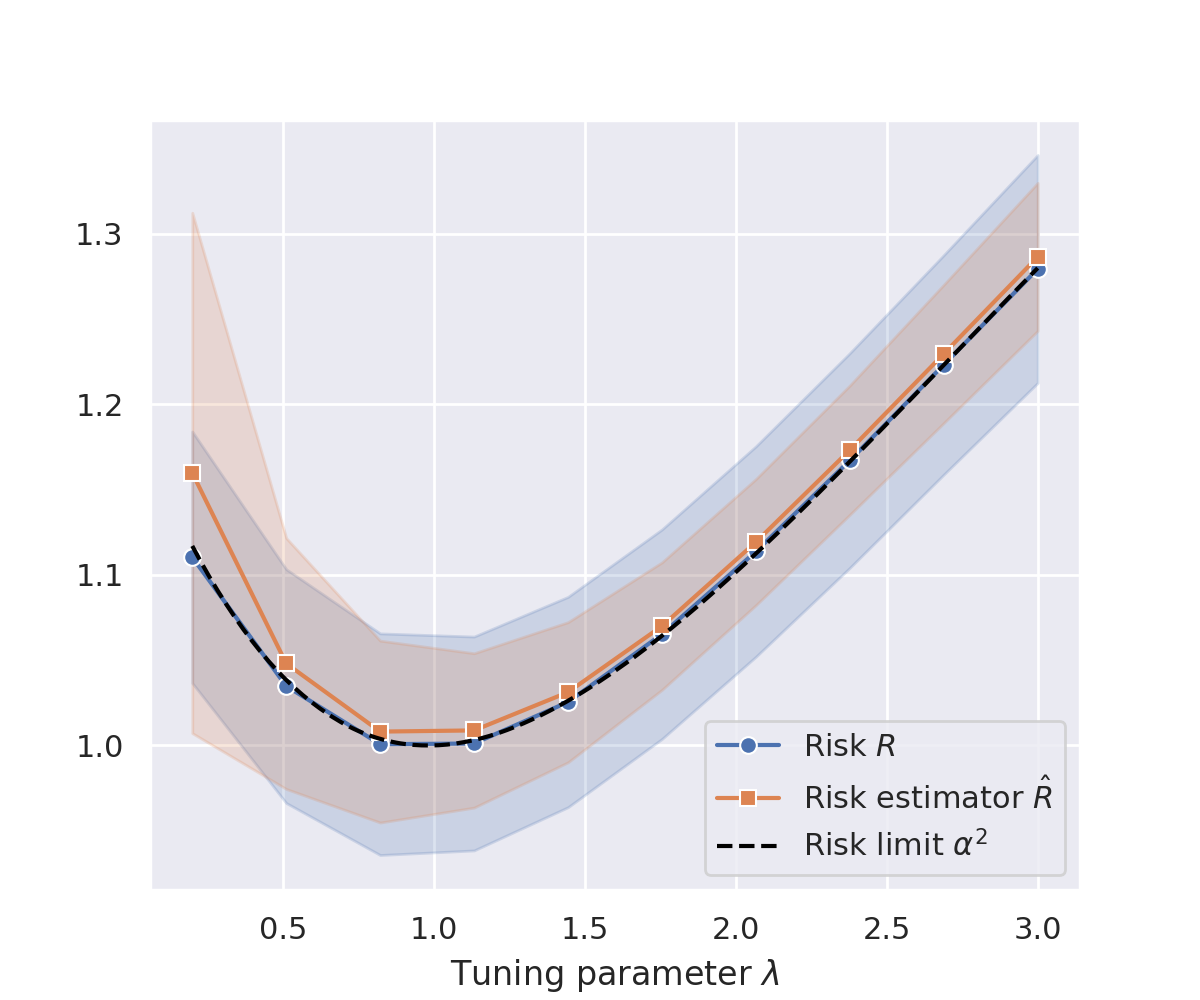}
        \caption{Rademacher}
    \end{subfigure}
    \begin{subfigure}[b]{0.32\textwidth}
        \centering
        \includegraphics[width=\textwidth]{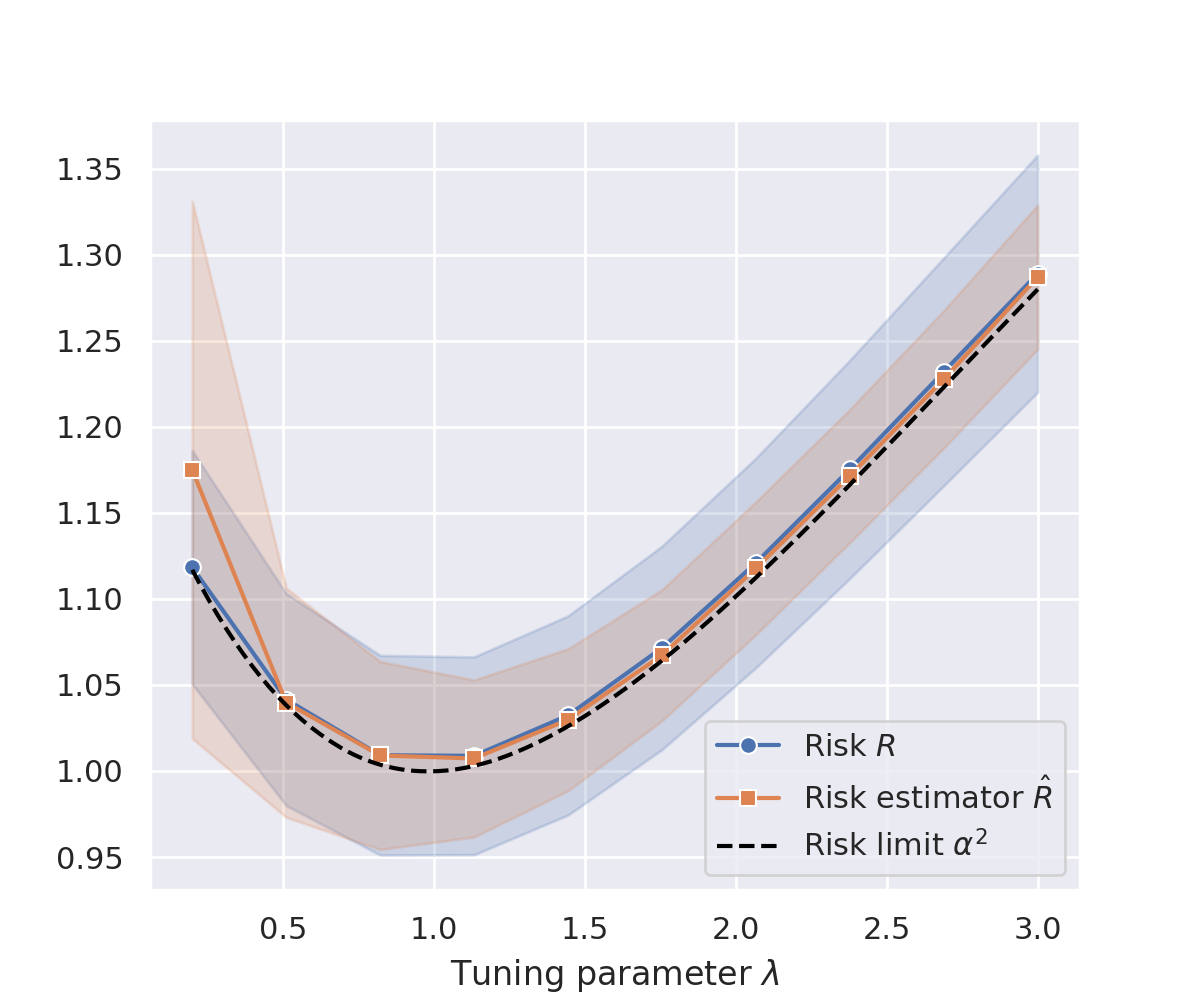}
        \caption{Uniform $[-1, 1]$ }
    \end{subfigure}
    \begin{subfigure}[b]{0.32\textwidth}
        \centering
        \includegraphics[width=\textwidth]{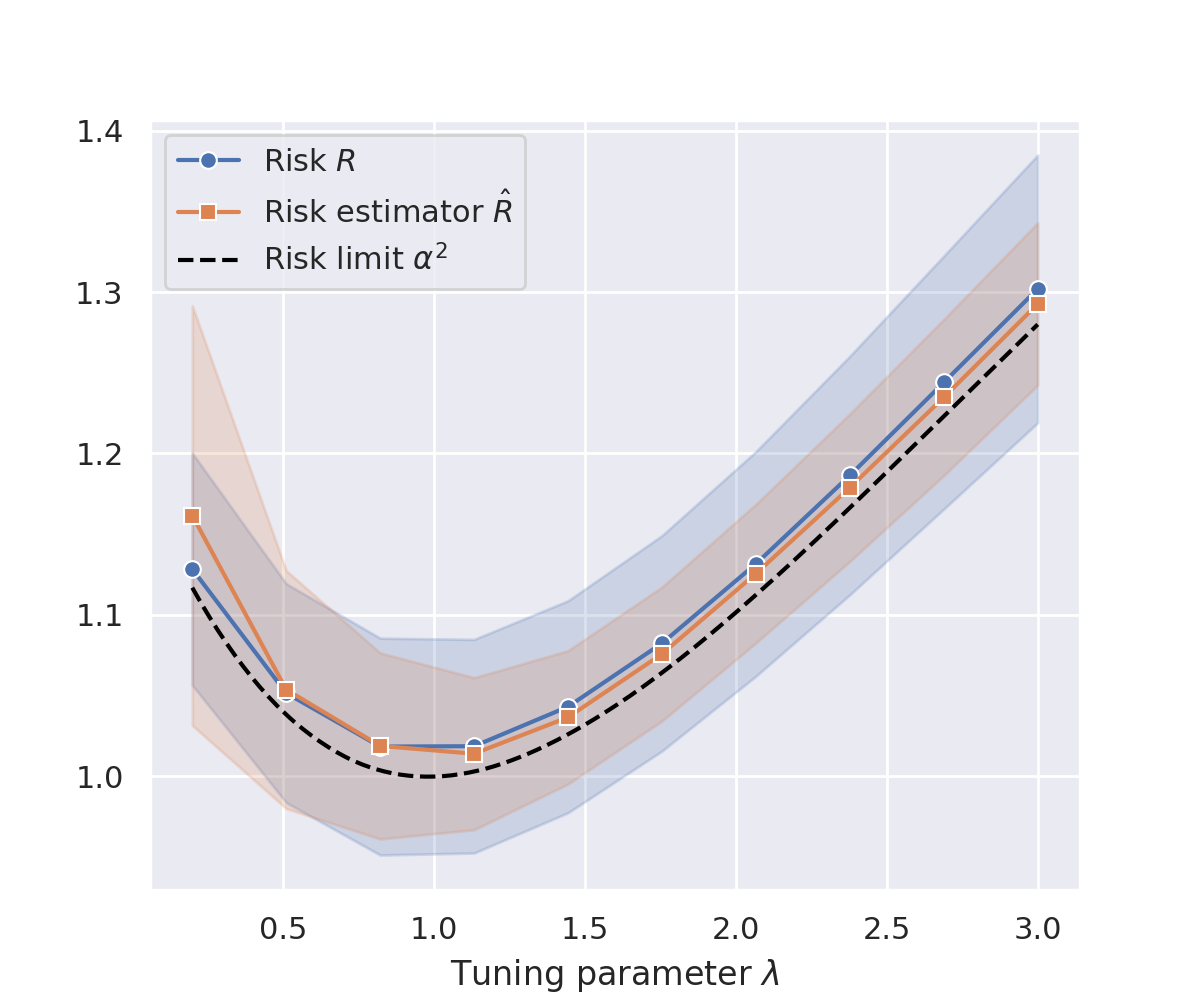}
        \caption{T-dist with $\text{df}=4$}
    \end{subfigure}
    \caption{
    \editline{We change the distribution of the design matrix $\bX$ to Uniform, Student's T, and Rademacher distributions with proper normalization so that the variance is $1$. 
    Simulation parameter: the noise distribution $F_\epsilon = \text{t-dist}(\text{df}=2)$, 
    sample size $n=4000$, feature $p=1200$, scaling parameter $\lambda=1$, $100$ repetition. 
    }
    }
    \label{fig:universality}
\end{figure}

\subsection{Risk behavior as dimensionality ratio $\gamma=p/n$ approaches $1$}\label{subsec:dependence_on_gamma}
We plot the risk and its estimator as $\gamma=p/n$  varies in \Cref{fig:change_gamma}. The results show that the risk estimation becomes increasingly unstable as  $\gamma \to 1^{-}$.

\begin{figure}[htpb]
    \centering
    \begin{subfigure}[b]{0.49\textwidth}
        \centering
        \includegraphics[width=\textwidth]{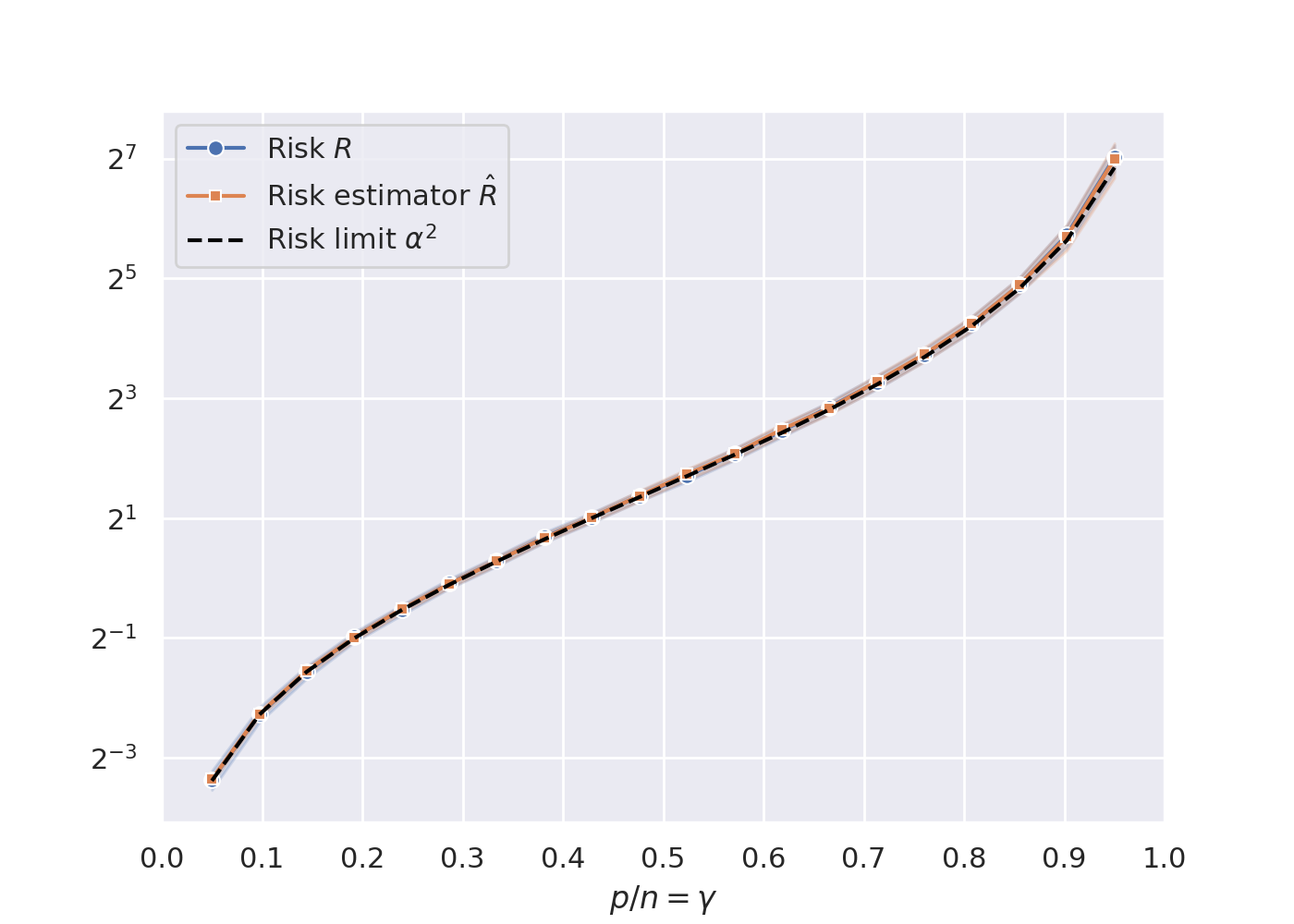}
        % \caption{}
    \end{subfigure}
    \begin{subfigure}[b]{0.49\textwidth}
        \centering
        \includegraphics[width=\textwidth]{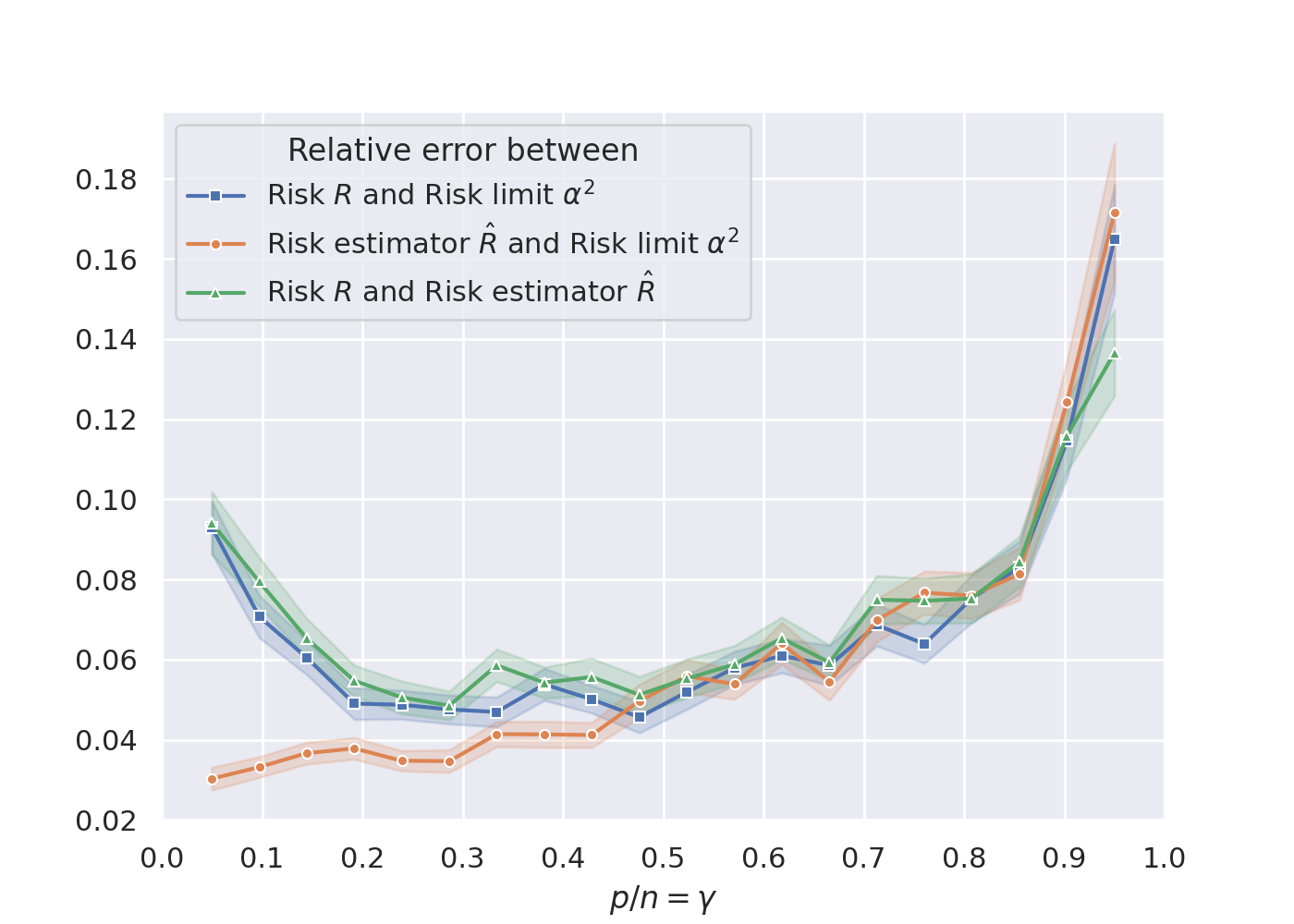}
        % \caption{T-dist with $\text{df}=4$}
    \end{subfigure}
    \caption{
        \editline{
        We plot the risk and its estimator for different ratio $\gamma= p/n$. 
       Simulation parameters are as follows: sample size $n=4000$, noise distribution $F_\epsilon=\text{t-dist}(\text{df}=2)$, scaling parameter $\lambda=1$, with $100$ repetition. The relative error between $A$ and $B$ is $|A/B-1|$. 
        }
    }
    \label{fig:change_gamma}
\end{figure}
}

\editline{
\subsection{Convergence of risk estimation variance with increasing sample size}\label{subsec:increase_n}
We plot the risk and its estimator as $n$  varies for different values of  $\gamma = p/n$ :  $\gamma = 0.25$  in \Cref{fig:change_n_gamma025} and  $\gamma = 0.5$  in \Cref{fig:change_n_gamma05}. In both cases, the variances of the risk and risk estimator decrease as  $n \to +\infty$. Interestingly, these figures suggest that the variances decays in $\sqrt{n}$-rate.

\begin{figure}[htpb]
    \centering
    \begin{subfigure}[b]{0.49\textwidth}
        \centering
        \includegraphics[width=\textwidth]{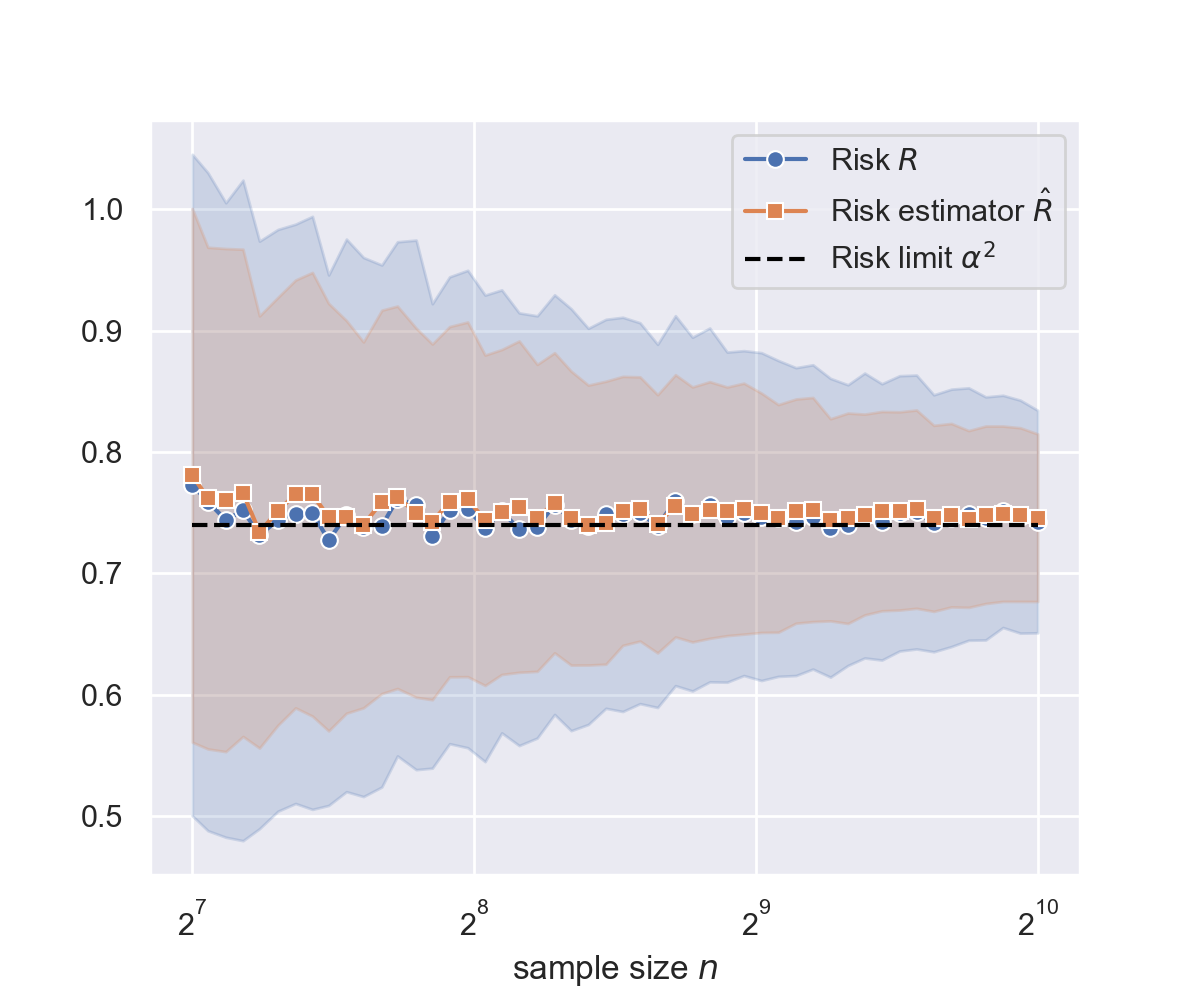}
    \end{subfigure}
    \begin{subfigure}[b]{0.49\textwidth}
        \centering
        \includegraphics[width=\textwidth]{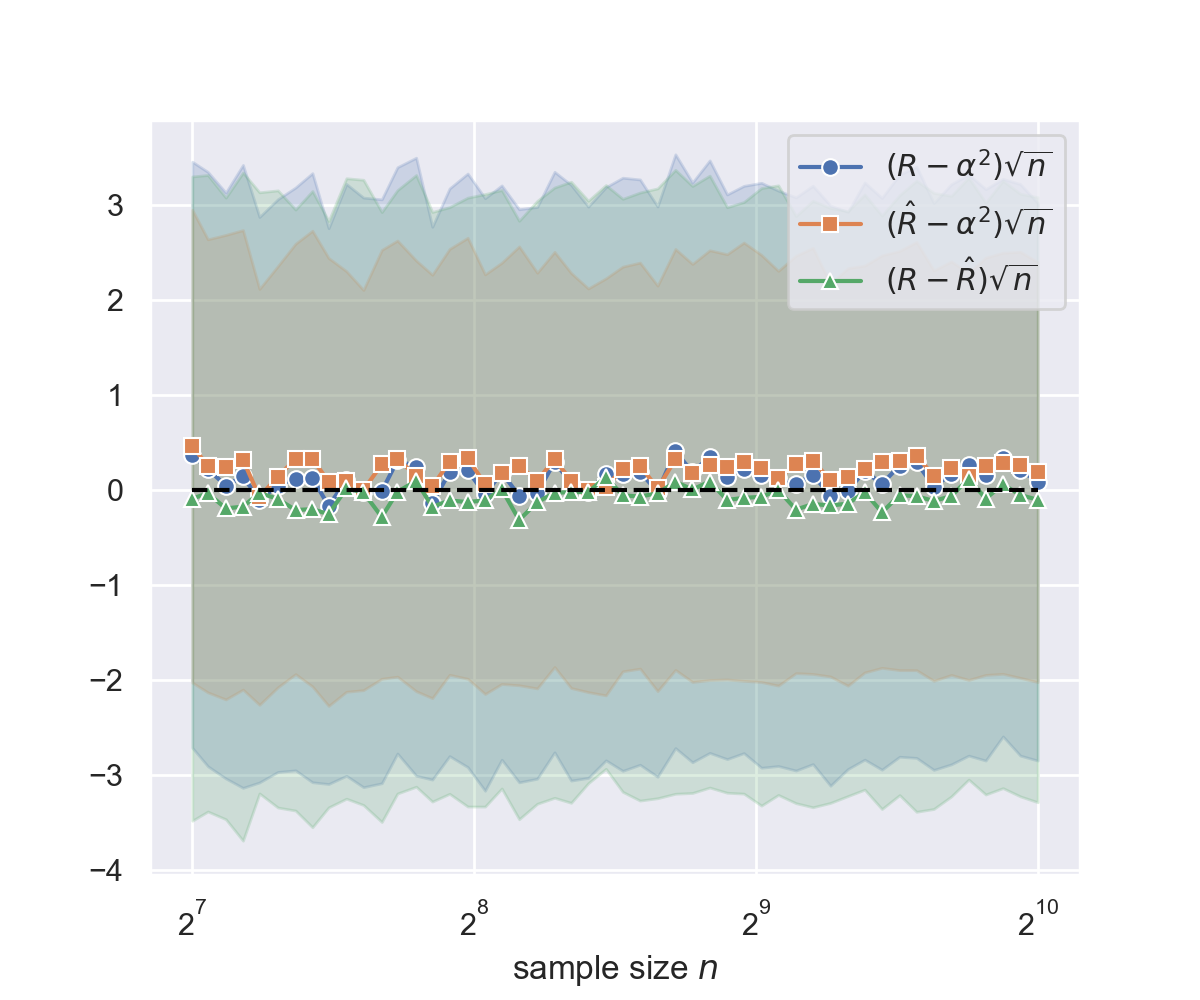}
    \end{subfigure}
    \caption{
        \editline{We plot the risk and its estimator for different $n$. 
    Simulation parameters are as follows: $p/n = \gamma=0.25$, $F_\epsilon=\text{t-dist}(\text{df}=2)$, scaling parameter $\lambda=1$, with $1000$ repetition.}}
    \label{fig:change_n_gamma025}
\end{figure}

\begin{figure}[htpb]
    \centering
    \begin{subfigure}[b]{0.49\textwidth}
        \centering
        \includegraphics[width=\textwidth]{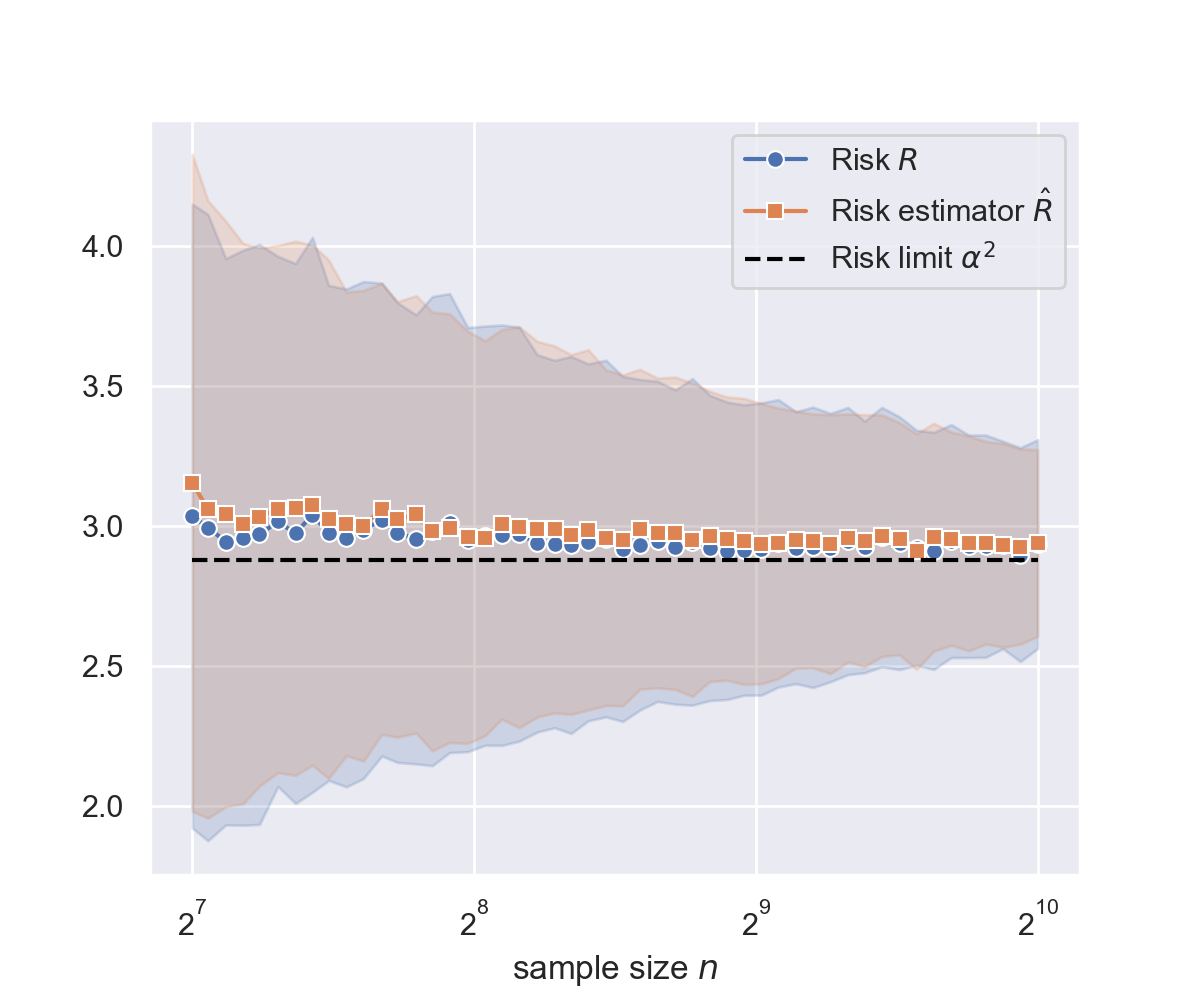}
    \end{subfigure}
    \begin{subfigure}[b]{0.49\textwidth}
        \centering
        \includegraphics[width=\textwidth]{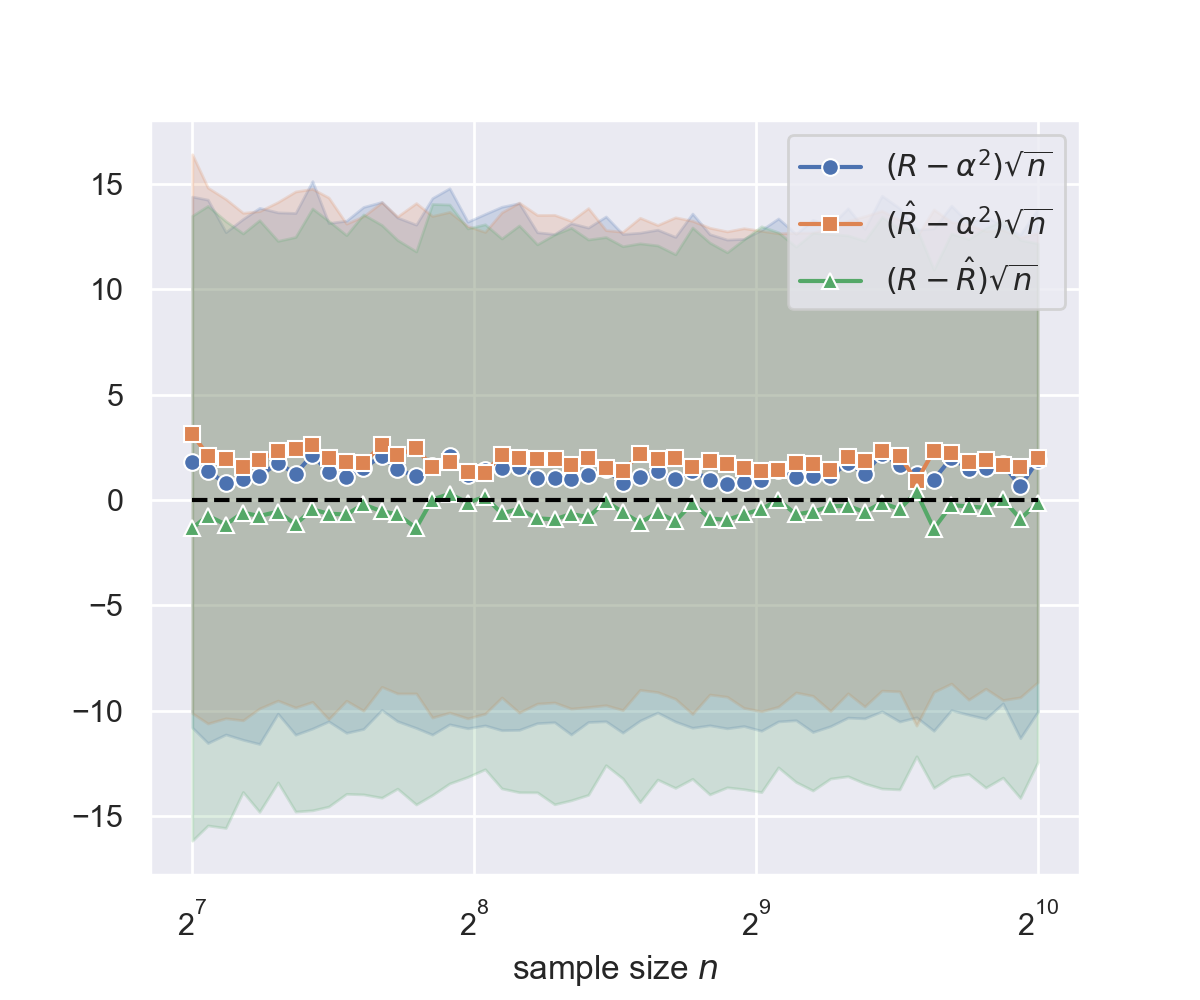}
    \end{subfigure}
    \caption{
    \editline{    
    We plot the risk and its estimator for different $n$. 
    Simulation parameters are as follows: $p/n = \gamma=0.5$, $F_\epsilon= \text{t-dist}(\text{df}=2)$, scaling parameter $\lambda=1$, with $1000$ repetition.}}
    \label{fig:change_n_gamma05}
\end{figure}
}

\end{document}